\renewcommand{\tocsection}[3]{%
  \indentlabel{\@ifnotempty{#2}{\bfseries\ignorespaces#1 #2\quad}}\bfseries#3}
\renewcommand{\tocsubsection}[3]{%
  \indentlabel{\@ifnotempty{#2}{\ignorespaces#1 #2\quad}}#3}
\newcommand\@dotsep{4.5}
\def\@tocline#1#2#3#4#5#6#7{\relax
  \ifnum #1>\c@tocdepth 
  \else
    \par \addpenalty\@secpenalty\addvspace{#2}%
    \begingroup \hyphenpenalty\@M
    \@ifempty{#4}{%
      \@tempdima\csname r@tocindent\number#1\endcsname\relax
    }{%
      \@tempdima#4\relax
    }%
    \parindent\z@ \leftskip#3\relax \advance\leftskip\@tempdima\relax
    \rightskip\@pnumwidth plus1em \parfillskip-\@pnumwidth
    #5\leavevmode\hskip-\@tempdima{#6}\nobreak
    \leaders\hbox{$\m@th\mkern \@dotsep mu\hbox{.}\mkern \@dotsep mu$}\hfill
    \nobreak
    \hbox to\@pnumwidth{\@tocpagenum{\ifnum#1=1\bfseries\fi#7}}\par
    \nobreak
    \endgroup
  \fi}
\renewcommand\csname r@tocindent0\endcsname{0pt}
\def\l@subsection{\@tocline{2}{0pt}{2.5pc}{5pc}{}}
\DeclareMathOperator{\im}{Im}
\DeclareMathOperator{\End}{End}
\DeclareMathOperator{\Length}{Length}
\DeclareMathOperator{\Area}{Area}
\DeclareMathOperator{\dist}{dist}
\DeclareMathOperator{\Eren}{E_{\mathrm{ren}}}
\DeclareMathOperator{\E}{E}
\DeclareMathOperator{\Isom}{Isom}
\DeclareMathOperator{\blambda}{\boldsymbol{\lambda}}
\DeclareMathOperator{\Punc}{Punc}
\newtheorem{thm}{Theorem}[section]
\newtheorem{prop}[thm]{Proposition}
\newtheorem{lem}[thm]{Lemma}
\newtheorem{defn}[thm]{Definition}
\newtheorem{cor}[thm]{Corollary}
\theoremstyle{defn}
\newtheorem{rem}[thm]{Remark}
\begin{document}
\title[Random harmonic maps into spheres]
{Random harmonic maps into spheres}

\author{Antoine Song}
\address{California Institute of Technology\\ 177 Linde Hall, \#1200 E. California Blvd., Pasadena, CA 91125}
\email{aysong@caltech.edu}


\maketitle

\begin{abstract} 
Let $S$ be a punctured Riemann surface with Euler characteristic $\chi(S)<0$.
For any unitary representation $\rho: \pi_1(S) \to U(N)$, we introduce its renormalized energy and its harmonic representatives, which are equivariant harmonic maps from the universal cover of $S$ to the unit sphere in $\mathbb{C}^N$. 
Our main result is that if a sequence of unitary representations $\rho_j$ strongly converges, then their renormalized energies converge to $\frac{\pi}{4}|\chi(S)|$ and the shape of their harmonic representatives converges to a unique rescaled hyperbolic metric. 
Combining this statement with examples of strongly converging representations provided by random matrix theory, we derive the following  applications. 
\begin{itemize}
\item 
If $\pi_1(S)$ is a free group, then for a random $\rho: \pi_1(S) \to U(N)$, the shape of its harmonic representatives concentrates around a rescaled hyperbolic metric with high probability as $N\to \infty$.
\item
For any closed hyperbolic surface, a finite covering admits a harmonic immersion into some Euclidean unit sphere, which is almost isometric after rescaling. 
\item
There are closed, branched, minimal surfaces $\mathfrak{S}_j$ in some Euclidean unit spheres such that 
$\mathfrak{S}_j$ Benjamini-Schramm converges to a rescaled hyperbolic plane as $j\to \infty$, 
and the Gaussian curvature $K_j$ of $\mathfrak{S}_j$ satisfies
$$\lim_{j\to \infty} \frac{1}{\Area(\mathfrak{S}_j)}\int_{\mathfrak{S}_j} |K_j+8|=0.$$
\end{itemize}

\end{abstract}


\section*{Introduction}

A smooth map from a Riemann surface to a Riemannian manifold is \emph{harmonic} if it is a critical point of the Dirichlet energy functional. Since the seminal paper by J. Eells and J. Sampson \cite{ES64}, tremendous effort has been devoted to equivariant harmonic maps into nonpositively curved spaces coming from representations of surface groups: given a Riemann surface $S$ and a representation of its fundamental group into the isometry group of a Riemannian manifold $X$ with nonpositive curvature,
$$\rho:\pi_1(S)\to \Isom(X),$$
one can often construct by energy minimization a harmonic map from the universal cover of $S$ to $X$ which is $\pi_1(S)$-equivariant with respect to $\rho$. These equivariant harmonic maps are rigid, for instance they are unique up to natural equivalence \cite{Hartman67}. This type of rigidity plays a fundamental role in the non-abelian Hodge  correspondence \cite{Garcia-Prada09} \cite{Li19}\cite{Thomas24}, in Teichm\"{u}ller theory \cite{DW07}  \cite{JY09} \cite{Labourie19}, and has been a driving force in the development of geometric analysis \cite{SY97}\cite{Jost08}.

What happens when the target space $X$ is not nonpositively curved? 
Consider equivariant harmonic maps into Euclidean spheres.  While rigidity in the traditional sense does not hold, we find a new kind of rigidity which is more probabilistic in nature: when the dimensions of the spheres grow to infinity, the shapes of equivariant harmonic maps tend to concentrate around a unique hyperbolic geometry.

To formalize this statement, we combine harmonic map theory with the topic of unitary representations of surface groups.  This provides a playground for exploring the effect of randomness and high dimensionality on variational objects such as harmonic maps and minimal surfaces.  In particular, we establish a concrete connection between random matrix theory and harmonic map theory, which is the content of  the main theorem.



\subsection{Renormalized energy and equivariant harmonic maps}
Consider a punctured Riemann surface $S$, namely a closed oriented surface minus finitely many points and endowed with a conformal structure. Suppose that $S$ has negative Euler characteristic $\chi(S)<0$.
For $N\geq 1$, consider a unitary representation of its fundamental group:
$$\rho:\pi_1(S) \to U(N).$$ 
The representation $\rho$ induces an isometric action of $\pi_1(S)$ on the unit sphere $(\mathbb{S}^{2N-1},g_{\mathrm{Eucl}})$ in $\mathbb{C}^N$ with its standard Euclidean metric $g_{\mathrm{Eucl}}$.

One of the simplest geometric invariants for the pair $(S,\rho)$ is its \emph{energy}:
\begin{align*}
\E(S, \rho) := \inf\{& \frac{1}{2}\int_{\mathbf{D}_S} |d\varphi(x)|^2; \\
 & \text{$\varphi$ is a $\pi_1(S)$-equivariant smooth map from $\tilde{S}$ to $\mathbb{S}^{2N-1}$}\}
 \end{align*}
where $\tilde{S}$ is the universal cover of $S$, $\pi_1(S)$ acts  by deck transformations on $\tilde{S}$, $\mathbf{D}_S\subset \tilde{S}$ is any Borel fundamental domain of this action, and the $L^2$-norm of the differential of $\varphi$ is computed with respect to $g_{\mathrm{Eucl}}$ on $\mathbb{S}^{2N-1}$ and any Riemannian metric on $S$ compatible with it conformal structure. 

When $S$ has punctures, $\E(S, \rho) $ is typically  infinite. 
By applying a standard renormalization procedure that originated in the work of F. Bethuel, H. Brezis and F. H\'{e}lein \cite{BBH94}, we define our main invariant, the \emph{renormalized energy} of $(S,\rho)$:
\begin{align*}
\Eren(S, \rho) := \inf\{& \text{renormalized energy of $\varphi$}; \\
 & \text{$\varphi$ is a $\pi_1(S)$-equivariant smooth map from $\tilde{S}$ to $\mathbb{S}^{2N-1}$}\}
 \end{align*}
It is a finite number and is obtained by minimizing the energy after subtracting off the diverging  logarithmic energy contribution near the punctures (see Subsection \ref{Eren}). When $\E(S, \rho) $ is finite, we have $\E(S, \rho) =\Eren(S, \rho) $. 

The main geometric property of the renormalized energy $\Eren(S,\rho)$ is that it is always ``achieved'' by an optimal harmonic map 
$$\psi: \tilde{S}\to ( \mathbb{S}^{2N-1},g_{\mathrm{Eucl}})$$
which is $\pi_1(S)$-equivariant with respect to $\rho$ and equivariantly energy-minimizing, see Theorem \ref{realize}. 
Any such $\psi$ will be called a \emph{harmonic representative} of $(S,\rho)$. Its intrinsic shape is encoded by the pullback metric   
$\psi^*g_{\mathrm{Eucl}}$ on $\tilde{S}$, which descends by equivariance of $\psi$ to a Riemannian metric on $S$, still called $\psi^* g_{\mathrm{Eucl}}$.

\emph{How does the renormalized energy behave? What is the geometry of the  harmonic representatives?}
We will answer these questions in the large $N$ limit, when the representation $\rho$ is chosen at random.

\subsection{Applications}
Before stating our main result, here are some unexpected corollaries.

\textbf{Geometric concentration:}
The unique conformal, finite-area, complete hyperbolic metric on $S$ is denoted by $g_{\mathrm{hyp}}$. 
Suppose that $\pi_1(S)$ is isomorphic to a nonabelian free group $F_k$ of rank $k\geq 2$.  Since $\chi(S)<0$, this occurs if and only if $S$  has at least one puncture. 
Unitary representations of $\pi_1(S)$ into $U(N)$ are then in one-to-one correspondence with $k$-tuple $(u_1,...,u_k)\in U(N)^k$, and the Haar measure on $U(N)^k$ provides a natural notion of random unitary representation of $\pi_1(S)$, see Subsection \ref{average}. Our first application is a geometric concentration result for the shape of harmonic representatives:
\begin{thm} \label{app1}
Let $\epsilon>0$ and let $K\subset S$ be a compact domain. If $N$ is large enough, then for a random unitary representation $\tau:\pi_1(S)\to U(N)$ and any harmonic representative $\psi$ of $(S,\tau)$, with probability at least $1-\epsilon$,
$$ |\Eren(S,\tau) -\frac{\pi}{4}(k-1)|<\epsilon\quad \text{and} \quad \|\psi^*g_{\mathrm{Eucl}} - \frac{1}{8}g_{\mathrm{hyp}}\|_{C^2(K)}<\epsilon.$$
\end{thm}
Here, the $C^2$-norm is computed with respect to $g_{\mathrm{hyp}}$. Thus, $\Eren(S,\tau)$ tends to concentrate around a value independent of the Riemann surface $S$, once the rank of $\pi_1(S)$ is fixed, while the pullback metric by harmonic representatives tends to concentrate around a rescaled hyperbolic metric which determines the Riemann surface $S$.

\textbf{Special immersions of surfaces into Euclidean spaces:}
Our next corollary concerns an old theme in  differential geometry: 

\emph{What is the best way to immerse a closed surface inside a Euclidean space?} 

Consider a closed hyperbolic surface $(\Sigma,g_{\mathrm{hyp}})$.
Nash's embedding theorem provides a massive collection of isometric immersions of $(\Sigma,g_{\mathrm{hyp}})$ into Euclidean spaces \cite{Andrews02}.
On the other hand, it was later shown that such an isometric immersion of $(\Sigma,g_{\mathrm{hyp}})$ never satisfies other natural conditions: it cannot be a harmonic map into a Euclidean sphere \cite[Theorem 2.3]{Bryant85}, or have parallel mean curvature \cite[Theorem 2]{CL72} \cite[Classification Theorem]{CC73} \cite[Theorem 4]{Yau74} or have parallel normalized mean curvature \cite[Theorem 2]{Chen80}. Since then, determining the boundary between these two regimes of extreme flexibility and rigidity had been open. Our second application provides an essentially optimal answer: 
\begin{thm} \label{app2}
Let $\epsilon>0$. For any closed hyperbolic surface $(\Sigma,g_{\mathrm{hyp}})$, there exists a finite degree covering $(\Sigma',g_{\mathrm{hyp}})$ and a harmonic immersion into a Euclidean unit sphere
$$\psi:(\Sigma',g_{\mathrm{hyp}}) \to (\mathbb{S}^{n_\epsilon},g_{\mathrm{Eucl}})$$
 such that $\|\psi^* g_{\mathrm{Eucl}} - \frac{1}{8} g_{\mathrm{hyp}} \|_{C^2(\Sigma')} <\epsilon$.
\end{thm}
Here the $C^2$-norm is computed with respect to $g_{\mathrm{hyp}}$. 
We speculate on the meaning of the factor $\frac{1}{8}$ in Remark \ref{meaning}.
These harmonic immersions are obtained using random constructions, and as of now there is no known construction of such maps based on more conventional, deterministic methods.
To our knowledge, this is the first application of the probabilistic method and random matrix theory to special immersions of surfaces.


\textbf{Almost hyperbolic minimal surfaces in spheres:}
For our third application, recall that a closed, branched, minimal surface is the image of a closed surface by a harmonic map which is weakly conformal. 
The study of minimal surfaces in spheres has a long history. 
Minimal round spheres and minimal flat tori in Euclidean spheres exist, and are completely classified \cite{Calabi67} \cite{Kenmotsu76}\cite[Proposition 3.3]{Bryant85}. 
On the other hand, the obstruction of R. Bryant  \cite[Theorem 2.3]{Bryant85} states that minimal surfaces in Euclidean spheres can never be hyperbolic surfaces \cite[Theorem 2.3]{Bryant85}. This obstruction is one of the few existing results on the relation between minimal surfaces in spheres and negative Gaussian curvature. Curiously, the following problem from S.-T. Yau's 1982 list  \cite[Problem Section, Problem 101]{Yau82} is still open:  

\emph{Is there a closed, minimal surface $\mathfrak{S}$ in a Euclidean sphere with negative Gaussian curvature $K_\mathfrak{S}<0$?}

As a corollary of the main theorem, we show the existence of new minimal surfaces in high dimensional spheres, which are almost hyperbolic on average:
\begin{thm}\label{thm:almost hyperbolic}
There exist a sequence of closed, branched, minimal surfaces
$\mathfrak{S}_j$ in Euclidean unit spheres $(\mathbb{S}^{n_j},g_{\mathrm{Eucl}})$ such that 
$$\lim_{j\to \infty} \frac{1}{\Area(\mathfrak{S}_j)}\int_{\mathfrak{S}_j}|K_{\mathfrak{S}_j}+8|=0.$$
Moreover, $\mathfrak{S}_j$  Benjamini-Schramm converges to $(\mathbb{H}^2,\frac{1}{8}g_{\mathrm{hyp}})$ as $j\to \infty$.
\end{thm}
In this statement, $K_{\mathfrak{S}_j}$ is the Gaussian curvature of $\mathfrak{S}_j$ with respect to the induced metric, 
$(\mathbb{H}^2,\frac{1}{8}g_{\mathrm{hyp}})$ is the rescaled hyperbolic plane with Gaussian curvature $-8$, and Benjamini-Schramm convergence captures the asymptotic geometry of $\mathfrak{S}_j$ around most points (it will be defined in Subsection \ref{min surff}). Although Theorem \ref{thm:almost hyperbolic} is relevant to \cite[Problem Section, Problem 101]{Yau82}, its main point lies elsewhere: 
the qualitative properties of $\mathfrak{S}_j$ are drastically different from the familiar examples of minimal surfaces in $\mathbb{S}^3$ or $\mathbb{S}^4$ \cite{Lawson70} \cite{Bryant82} \cite{KPS88} \cite{KY10}\cite{CS15}. 
They  support the heuristic  that negative curvature should be ``typical'' for minimal surfaces in higher-dimensional Riemannian manifolds.

\subsection{Main result: strong convergence and harmonic maps}
Let $S$ be a punctured Riemann surface of genus $\mathbf{g}$ and with $\mathbf{n}\geq0$ punctures.
Suppose that $S$ has negative Euler characteristic $\chi(S)=2-2\mathbf{g}-\mathbf{n}<0$. Its unique conformal, finite-area, complete hyperbolic metric is denoted by $g_{\mathrm{hyp}}$. 

Let us state the definition of strong convergence, an important property coming from free probability and random matrix theory. A sequence of unitary representations  $\rho_j:\pi_1(S)\to U(N_j)$ \emph{strongly converges} when 
for any $z\in \mathbb{C}[\pi_1(S)]$,
$$\lim_{j\to \infty} \|\rho_j(z)\| = \|\lambda_{\pi_1(S)}(z)\|$$
where $\|.\|$ is the operator norm, and $\lambda_{\pi_1(S)}:\pi_1(S) \to \End(\ell^2(\pi_1(S)))$ is the left regular representation (see Subsection \ref{strong cv} in the Appendix).

\begin{thm}[Main theorem] \label{thm:main}
Let $\rho_j:\pi_1(S)\to U(N_j)$ be a sequence of unitary representations and, for each $j\geq1$, let $\psi_j$ be a harmonic representative of $(S,\rho_j)$. 
If $\rho_j$ strongly converges, then
$$\lim_{j\to \infty} \Eren(S,\rho_j) =\frac{\pi}{4}|\chi(S)|$$
and  $\psi_j^* g_{\mathrm{Eucl}} $ converges to $\frac{1}{8} g_{\mathrm{hyp}}$ in the $C^\infty$-topology on compact subsets of $S$ as $j\to \infty$.
\end{thm}

The proof of Theorem \ref{thm:main} (see Subsection \ref{subsection:cv metric}) gives a stronger conclusion: the harmonic maps $\psi_j$ in the statement subsequentially converge to a limit \emph{minimal immersion}\footnote{Building on this paper, together with Riccardo Caniato and Xingzhe Li, we showed in \cite{CLS25} that this limit minimal surface is actually unique up to rotations of the sphere.} from a surface to an infinite dimensional Hilbert  unit sphere, whose pullback metric is equal to the \emph{rescaled hyperbolic metric $\frac{1}{8} g_{\mathrm{hyp}}$}. 
This theorem connects an analytic condition on norms of linear operators to the geometric behavior of harmonic maps: under this condition, the rescaled hyperbolic metric emerges as the unique limit of pullback metrics. I suspect that, under mild assumptions, the conclusion in this theorem should actually be equivalent to strong convergence. 
The assumption of Theorem \ref{thm:main} is often satisfied thanks to the ``universality'' of strong convergence: a plethora of models of sequences of random unitary representations all strongly converge.
There has been a flurry of activity in random matrix theory on this subject in recent years, starting with the fundamental work of U. Haagerup and S. Thorbj{\o}rnsen \cite{HT05}. Remarkably, there is no known deterministic construction of strongly converging sequences of finite dimensional unitary representations.
Here are a few results from this large body of work (see Subsection \ref{examples} in the Appendix and references in \cite{Magee24}): based on  \cite{HT05}, B. Collins and C. Male showed that sequences of unitary representations of free groups built from random Haar unitaries strongly converge \cite{CM14}; in an important paper, C. Bordenave and B. Collins established the strong convergence of sequences of unitary representations of free groups constructed from random permutations \cite{BC19}; more recently, C.F. Chen, J. Garza-Vargas, J. Tropp and R. van Handel  discovered that random ``stable representations'' of free groups strongly converge, a result which recovers previous results on free groups with new proof methods \cite{CGZTvH24}; 
L. Louder and M. Magee constructed sequences of unitary representations of surface groups which strongly converge \cite{LM25}.



\subsection{Related work and comments}

In the genealogy of harmonic map theory, our results belong to the broad theme of representations of surface groups into Lie groups 
and equivariant harmonic maps into symmetric spaces. The literature on this subject is enormous; we refer the reader to  \cite[Chapters III and XIII]{SY97} \cite{DW07} \cite{JY09} \cite[Chapter 9]{Jost08} \cite{Garcia-Prada09} \cite{Li19} \cite{Thomas24} and the references therein.
Essentially all previous results focused on nonpositively curved target spaces, where the rigidity of equivariant harmonic maps is leveraged to study representations and their moduli spaces, in a fixed dimension. In contrast, what we establish for representations into unitary groups is a ``probabilistic rigidity'' statement in the regime where the dimension tends to infinity. This eventually hinges on a rigidity result for certain equivariant minimal surfaces in infinite dimensional spheres, see Section \ref{section 2}. Are there analogous phenomena for representations into general Lie groups? 

Unitary representations of surface groups is a classical topic on its own \cite{Labourie13}. When the surface has punctures, the fundamental group is free. Its unitary representations are then in one-to-one correspondence with tuples of unitary matrices, which have been intensively studied in random matrix theory and free probability, see \cite{Speicher17} and the references in the Appendix.
When the surface has no punctures, the unitary representations of its fundamental group are related to stable holomorphic vector bundles by M. Narasimhan and C. Seshadri \cite{NS65}. Their moduli space carries a natural symplectic volume form by M. Atiyah and R. Bott \cite{AB83}, and W. Goldman \cite{Goldman84}, whose volume was computed by E. Witten \cite{Witten91} and A.N. Sengupta \cite{Sengupta03}. In \cite{Magee21} \cite{Magee22}, M. Magee computed the expansion of Wilson loops on this moduli space.

This article is part of a series \cite{Antoine23a} \cite{Antoine23b} \cite{Antoine24b} \cite{Antoine25} which initiates a study of orthogonal and unitary representations of groups using geometric concepts such as area, energy, minimal surfaces and harmonic maps. The central geometric objects  are triples $(\Gamma,\rho,\Sigma)$ where $\Gamma$ is a group, $\rho:\Gamma\to \End(V)$ is an orthogonal representation of $\Gamma$, and $\Sigma$ is a $\rho(\Gamma)$-invariant  $m$-dimensional minimal surface in the unit sphere of $V$. When do such triples exist? When are they rigid?
The variational problem for such triples is called the spherical Plateau problem. The harmonic maps we consider in this paper are closely connected, since they are solutions of a ``spherical Dirichlet problem'', see discussion in Subsection \ref{spb}.

The renormalized energy introduced in this paper is closely connected to several invariants in analysis, representation theory, geometry and topology (see \cite{Antoine25} for expanded explanations and related questions):\\
--\emph{Bethuel-Brezis-H\'{e}lein's renormalized energy:}
our renormalized energy is a direct descendant of the renormalized energy introduced by F. Bethuel, H. Brezis and F. H\'{e}lein \cite{BBH94} for harmonic maps from punctured planar domains to the circle. There are numerous generalizations relevant to us, including the generalization to Riemannian targets by A. Monteil, R. Rodiac, J. Van Schaftingen \cite{MRVS22} and the ``modified energy'' considered by G. Daskalopoulos, C. Mese in \cite{DM23}.  \\
--\emph{Spectral gaps:} the renormalized energy can be viewed as a nonlinear analogue of the first Laplace eigenvalue for sections of flat $\mathbb{C}^N$-bundles over surfaces \cite{Zargar22} \cite{Hide23}, and the corresponding harmonic maps into spheres are then nonlinear analogues of first Laplace eigensections.
\\
--\emph{Kazhdan constants:}
those invariants \cite[Remark 1.1.4]{BDLHV08} measure the distance between a  unitary representation $\rho_\Gamma$ of a discrete group $\Gamma$ and the trivial representation in the Fell topology. 
The renormalized energy could be used to define a notion of 2-dimensional Kazhdan constants.\\
--\emph{Energy for maps into nonpositively curved spaces:}
for nonpositively curved spaces, notions of energy or volume of representations analogous to the renormalized energy have been extensively studied \cite{Toledo89}\cite{Dunfield99} \cite{Francaviglia04} \cite{BCG07} \cite{Labourie17} \cite{PS17}...
Harmonic maps with infinite energy appear in \cite{Lohkamp90} \cite{Wolf91} \cite{JZ97} \cite{DM23} \cite{Sagman2023}...\\
--\emph{Besson-Courtois-Gallot's spherical volume:}
one of our original motivations for defining the renormalized energy is the spherical volume of G. Besson, G. Courtois and S. Gallot. It is a topological invariant for any closed manifold $M$, defined by minimizing the ``area'' of certain maps equivariant with respect to the regular representation of $\pi_1(M)$ \cite[Section 3]{BCG91}.

Minimal surfaces are a special case of harmonic maps. To our knowledge, there are roughly three general methods to construct large genus minimal surfaces in high dimensional spheres: area-minimization under symmetry assumptions \cite{SY79} \cite{SU82},
twistor map methods \cite{Bryant82} \cite{Bryant82b} \cite{Hano96} which produce superminimal surfaces, 
and $\lambda_1$-maximization methods \cite{Nadirashvili96} \cite{ESI00} \cite{Petrides14} (see also \cite{NS15} \cite{MS19}). 
The random minimal surfaces of Theorem \ref{thm:almost hyperbolic} are obtained via the first method. The second method is the most explicit and thus gives more information on the Gaussian curvature of the minimal surfaces. The third method yields minimal surfaces which might potentially\footnote{I learnt this possibility from Misha Karpukhin. It seems supported by an estimate of A. Ros \cite[Subsection 2.4]{Ros22}.}  become almost hyperbolic when the genus gets large, like the minimal surfaces in Theorem \ref{thm:almost hyperbolic}. 
Concerning \cite[Problem Section, Problem 101]{Yau82}, closed negatively curved minimal surfaces may well already exist in the round 4-sphere $\mathbb{S}^4$. Such surfaces do not exist in $\mathbb{S}^3$ by \cite{Lawson70} and they cannot be superminimal surfaces in $\mathbb{S}^4$ \cite{Bryant82} by \cite{Bryant24}.


Our work is closely connected to spectral geometry. Given a Riemann surface $S$ and a representation $\rho:\pi_1(S)\to U(N)$, there is a corresponding twisted flat $\mathbb{C}^N$-bundle $\mathcal{B}$ over $S$. Equivariant maps from $\tilde{S}$ to $\mathbb{S}^{2N-1}$ descend to sections of  $\mathcal{B}$ with constant norm $1$.  
Thus, the renormalized energy $\Eren(S,\rho)$ and harmonic representatives are nonlinear analogues of the first Laplace eigenvalue for sections of $\mathcal{B}$ and first Laplace eigensections of $\mathcal{B}$, respectively. 
One may connect the two sides via Ginzburg-Landau theory, which involves Dirichlet-type energies $E_\epsilon$ depending on a parameter $\varepsilon \in (0,\infty)$ (see \cite[Introduction, Equation (1)]{BBH94}). In the $\varepsilon\to0$ limit, critical points of $E_\epsilon$ correspond to harmonic representatives, while in the $\varepsilon\to\infty$ limit, they correspond to Laplace eigensections.
In view of Theorem \ref{thm:main}, what is the limit shape of first Laplace eigensections, for strongly convergent representations?
As for the $\lambda_1$-maximization methodology \cite{Nadirashvili96} \cite{ESI00} \cite{Petrides14} \cite{NS15} \cite{MS19}, what is the limit shape of  metrics on $S$ maximizing the first Laplace eigenvalue, for strongly convergent representations?

In a recent article \cite{HM23}, W. Hide and M. Magee settled an old open problem by showing that there are closed hyperbolic surfaces with arbitrarily large genus and with $\lambda_1$ arbitrarily close to $\frac{1}{4}$, by studying random coverings of hyperbolic surfaces. Their proof relies on resolvent methods and the strong convergence result of C. Bordenave and B. Collins \cite{BC19}, who treated an analogous problem for the spectral gaps of graphs.   More recent results \cite{Zargar22} \cite{Hide23}, based on the method of \cite{HM23}, imply that the first eigenvalue of the Laplace operator acting on sections of random flat bundles over a hyperbolic surface $(S,g_{\mathrm{hyp}})$ is at least $\frac{1}{4}-\varepsilon$ with high probability in large dimensions. Those techniques provide\footnote{I thank Michael Magee for explaining this to me.} sharp lower bounds for the renormalized energy $\Eren(S,\rho)$ for certain representations $\rho$ in large dimensions, see Remark \ref{magee}. On the other hand, the proof of Theorem \ref{thm:main}, especially the identification of the limit pullback metric, is based on arguments completely different from \cite{HM23}.



The conclusions of Theorems \ref{app2} and \ref{thm:almost hyperbolic} are reminiscent of several results about holomorphic curves (which are a special kind of harmonic maps and minimal surfaces) in complex geometry.
First, the analogue of \cite[Problem Section, Problem 101]{Yau82} for holomorphic curves in complex projective spaces is completely understood thanks to Kodaira embeddings \cite{Tian90}\cite{Mohsen22} and various obstructions \cite{Pereira11} \cite{Hulin00}: there are holomorphic curves in complex projective spaces whose induced metric is close to a hyperbolic metric after rescaling. In a similar vein, the period map for Riemann surfaces yields holomorphic curves inside complex tori whose induced metric is close to a hyperbolic metric after rescaling \cite{Kazhdan70} \cite{Rhodes93} \cite{McMullen13}. All those constructions are explicit, in contrast to Theorem \ref{thm:almost hyperbolic}. Is there an explicit construction of almost hyperbolic minimal surfaces in spheres?

Some earlier settings involved various different notions of ``random'' minimal surfaces.
The study of zeroes of random complex polynomials and random holomorphic sections, a special type of minimal submanifolds,  is surveyed in \cite{BCHM18} \cite{SZ23}. 
Minimal surfaces representing surface subgroups in hyperbolic manifolds and other locally symmetric spaces of nonpositive curvature, have been studied using dynamical and topological methods, see \cite{KM12} \cite{Hamenstadt15}\cite{CMN22} \cite{LN21}\cite{KMS23}. 
 Equidistribution and scarring results from min-max minimal surfaces \cite{MNS19} \cite{GG19} \cite{SZ21} \cite{Li23} shed light on the average behavior of minimal surfaces. See also the survey \cite{Antoine25a}.

\subsection{Outline of proof for the main theorem}

Roughly speaking, given two unitary representations $\tau_1$, $\tau_2$, we say that $\tau_1$ is weakly contained in $\tau_2$ if the linear action of $\tau_1$ can be arbitrarily well approximated or ``imitated'' by the linear action of $\tau_2$. Weak equivalence between $\tau_1$ and $\tau_2$ means that one is weakly contained in the other and vice versa, see Subsection \ref{weak equiv} in the Appendix for definitions. Under the assumptions of the main theorem, there are $\pi_1(S)$-equivariant harmonic maps $\psi_j:\tilde{S}\to \mathbb{S}^{2N_j-1}$ which achieve the renormalized energy $\Eren(S,\rho_j)$. 

\emph{Step 1 (Upper bound for the renormalized energy):}
The strong convergence of $\rho_j$ implies that the direct sum $\bigoplus_{j\geq1} \rho_j$ weakly contains the regular representation $\lambda_{\pi_1(S)}:\pi_1(S)\to \ell^2(\pi_1(S))$.
For any $\varepsilon>0$, there is an explicit map $\mathscr{P}_\epsilon$ from $\tilde{S}$ to the unit sphere of $\ell^2(\pi_1(S))$, equivariant with respect to  $\lambda_{\pi_1(S)}$, with energy $\frac{\pi}{4}(2\mathbf{g}+\mathbf{n}-2)+\varepsilon$ on a fundamental domain in $\tilde{S}$. 
By an approximation argument, as $j\to \infty$, we construct maps $\varphi_j:\tilde{S}\to \mathbb{S}^{2N_j-1}$ equivariant with respect to $\rho_j$, with renormalized energy at most $\frac{\pi}{4}(2\mathbf{g}+\mathbf{n}-2)+2\varepsilon$. In particular, $\limsup_{j\to \infty} \Eren(S,\rho_j)\leq\frac{\pi}{4}(2\mathbf{g}+\mathbf{n}-2)$.

\emph{Step 2 (Constructing a limit map):}
The harmonic maps $\psi_j$ can be shown to have their standard energy eventually upper bounded by $\limsup_{j\to \infty} \Eren(S,\rho_j)+\varepsilon$ for any $\varepsilon>0$, on each fixed compact subset of a fundamental domain of $\tilde{S}$. 
By the upper bound in Step 1 and standard harmonic map theory, the harmonic maps subsequentially converge on compact subsets to a limit harmonic map $\psi_\infty:\tilde{S}\to \mathbb{S}^\infty$ inside some Hilbert space. By lower semicontinuity of the energy, the map $\psi_\infty$ has energy at most $\frac{\pi}{4}(2\mathbf{g}+\mathbf{n}-2)$ on a fundamental domain.

\emph{Step 3 (Identifying the limit representation):} 
From the limit map $\psi_\infty$, we can construct a limit unitary representation $\rho'$ with respect to which $\psi_\infty$ is equivariant. 
By using the strong convergence of $\rho_j$ a second time (in a much more crucial way), we argue that $\rho'$ is in fact weakly equivalent to the regular  representation $\lambda_{\pi_1(S)}$.

\emph{Step 4 (Lower bound for the renormalized energy):} 
We show that any map $\tilde{S}\to \mathbb{S}_{\ell^2(\pi_1(S))}$ equivariant with respect to $\lambda_{\pi_1(S)}$ has energy  at least $\frac{\pi}{4}(2\mathbf{g}+\mathbf{n}-2)$ on a fundamental domain of $\tilde{S}$, and that an analogous property holds for $\rho'$ by approximation. We deduce from the previous step that $\psi_\infty$ has energy exactly $\frac{\pi}{4}(2\mathbf{g}+\mathbf{n}-2)$ on a fundamental domain, and that it is actually a branched minimal immersion. It also implies that $\lim_{j\to \infty} \Eren(S,\rho_j)=\frac{\pi}{4}(2\mathbf{g}+\mathbf{n}-2)$ as wanted.

\emph{Step 5 (Uniqueness of the limit pullback metric):} 
The last step is to show that any branched minimal immersion $\tilde{S}\to \mathbb{S}^\infty$ equivariant with respect to a representation weakly equivalent to $\lambda_{\pi_1(S)}$, and with energy $\frac{\pi}{4}(2\mathbf{g}+\mathbf{n}-2)$ on a fundamental domain, has to have its pullback metric equal to $\frac{1}{8}g_{\mathrm{hyp}}$. This is proved using a new interpolation argument which takes advantage of the infinite dimensionality of our setting, in addition to the  representation theory of $\mathrm{PSL}_2(\mathbb{R})$. 
It finishes the proof since $\psi_j$ subsequentially converges on compact subsets to $\psi_\infty$.

The key conceptual steps are the use of strong convergence in Step 3, and the uniqueness statement in Step 5.
The way we ended up writing our proof is a bit different from that outline; $\lambda_{\pi_1(S)}$ will in particular be replaced by another weakly equivalent, explicit, ``boundary'' unitary representation $\underline{\rho}_B$. Once the main theorem is proved, the applications (Theorems \ref{app1}, \ref{app2}, \ref{thm:almost hyperbolic}) follow by using well-chosen pairs $(S,\rho_j)$.

\subsection{Organization of the paper}
-- \textbf{Section \ref{section 1}:} Definition of the energy, renormalized energy and area; construction of equivariant harmonic maps; study of the special case of the regular representation.\\
--  \textbf{Section \ref{section 2}:} A general uniqueness result for equivariant minimal surfaces; application to the regular representation.\\
--  \textbf{Section \ref{section:bounds}:} Relation between weak containment and approximation of equivariant maps; constructions of extensions of maps with controlled renormalized energy; application to upper bounds for the renormalized energy.\\
--  \textbf{Section \ref{section 4}:} Proof of the main theorem: convergence of the renormalized energy and the pullback metric.\\
-- \textbf{Section \ref{section 5}:} Applications of the main theorem to different choices of surfaces and representations.\\
--  \textbf{Section \ref{section 6}:} Appendix: definitions of weak containment, weak equivalence and strong convergence; description of the three examples of strongly converging representations which we use.

\section*{Acknowlegments}
I am grateful to  Michael Magee, Ramon van Handel, Beno\^{i}t Collins and Jorge Garza-Vargas for answering my numerous questions about strong convergence of unitary representations; Robert Bryant for a detailed explanation of his work on minimal surfaces in spheres; Misha Karpukhin and Daniel Stern for interesting exchanges about eigenvalue optimization problems.
I would also like to thank Nicos Kapouleas, Xin Zhou, Yuchin Sun and Christine Breiner for helpful discussions about minimal surfaces and harmonic maps; Peter Sarnak, Mario Micallef, Riccardo Caniato and Xingzhe Li for additional conversations.


A.S. was partially supported by NSF grant DMS-2104254. This research was partially conducted during the period A.S. served as a Clay Research Fellow.


\vspace{1em}

\section{Renormalized energy and harmonic maps} \label{section 1}
\subsection{Renormalized energy} \label{Eren}

Let $S$ be a punctured Riemann surface: equivalently, $S$ is a closed oriented surface minus finitely many points called punctures, endowed with a conformal structure. We will almost always assume that $S$ is of general type: it means that its Euler characteristic is negative, or equivalently that it admits a (unique) conformal, complete, finite area, hyperbolic metric called $g_{\mathrm{hyp}}$.
Let $\pi_1(S)$ be the fundamental group of $S$, let $\tilde{S}$ be its universal cover which is endowed with the lift of $g$, and on which $\pi_1(S)$ acts by deck transformations properly and free.
Let $\mathbf{D}_S\subset \tilde{S}$ be a Borel fundamental domain, which we can choose so that the boundary is piecewise smooth.
 
Let $V$ be a complex Hilbert space.
In this paper, the unit sphere of $V$ will always be called $\mathbb{S}_V$. Let 
$$\rho:\pi_1(S)\to \End(V)$$
be a unitary representation. It induces an isometric action of $\pi_1(S)$ on $\mathbb{S}_V$, which is endowed with its standard round Riemannian metric $g_V$.

Consider the space of maps
\begin{equation}\label{def H}
\mathscr{H}_{S,\rho}:= \{\text{$\pi_1(S)$-equivariant smooth maps from $\tilde{S}$ to $\mathbb{S}_V$}\}.
\end{equation}
It is an exercise in topology that this set is nonempty.
Recall that if $u$ is a smooth map from a Riemann surface $S'$ to a Riemannian manifold $(M,g_M)$, the energy of $u$ restricted to a subset $D\subset S'$ is defined as
$$\E(u\vert_D):= \frac{1}{2}\int_D |du(x)|^2 dv_{g_{S'}}(x)$$
where the $L^2$-norm of the norm of the differential $|du(x)|$ is computed with respect to any Riemannian metric $g_{S'}$ on $S'$ compatible with its conformal structure, and the metric $g_M$. 
This is well-defined by conformally invariance of the energy (it remains unchanged after replacing $g_{S'}$ by another conformal metric). 
We drop the subscript $D$ if $D={S'}$.

We introduce the spherical energy, or ``energy'' for short:
\begin{defn} \label{def E}
The spherical energy of $(S, \rho)$ is defined as
$$\E(S, \rho) := \inf\{\E(u\vert_{\mathbf{D}_S}); \quad u\in \mathscr{H}_{S,\rho}\} \in [0,\infty].$$
\end{defn}

When $S$ has at least one puncture, the energy defined above can be infinite in general. In order to get a finite number, we apply the renormalization procedure introduced by Bethuel-Brezis-H\'{e}lein \cite{BBH94} and generalized by Monteil-Rodiac-Van Schaftingen \cite{MRVS22}, Daskalopoulos-Mese \cite{DM23}. This procedure \emph{depends} on a choice of  conformal parameterization for a punctured neighborhood of each puncture by  a punctured flat unit disk. 
There are quite a few interesting choices for such parameterizations, but in this paper we will fix the following distinguished choice\footnote{Our main theorem does not in fact depend on the choice of parameterization.}, used in \cite{Wolpert07} for instance.
To define this parameterization, let us recall that in a complete, finite-area, hyperbolic surface, 
every cusp contains a unique embedded unit area cusp bounded by a curve with constant geodesic curvature, and these unit area cusps are all disjoint. 
Given $p\in  \mathrm{Punc}_S$, let $D(p,1)$ be the open region of $S$ surrounding $p$ corresponding to the unit area cusp in the cusp of $p$ when $S$ is endowed with its unique conformal complete, finite area hyperbolic metric $g_{\mathrm{hyp}}$. There is a conformal diffeomorphism unique up to rotations,
$$P_p: D(p,1)\to \mathbb{D}^*,$$
where $ \mathbb{D}^*$ is the Euclidean unit disk in $\mathbb{R}^2$ minus the origin $O$.
For $r\in (0,1]$, let 
\begin{equation}\label{dpr}
D(p,r):=\{q\in D(p,1); \quad \dist_{\mathrm{Eucl}}(P_p(q),O) < r\}.
\end{equation}
Denote by $\tilde{D}(p,r)$ the preimage of $D(p,r)$ under the natural projection map $\tilde{S}\to S$. 

Given a representation $\rho$ as above and a puncture $p\in \mathrm{Punc}_S$, oriented embedded small loops around the puncture determine a conjugacy class $\mathcal{J}_p \subset \pi_1(S)$.
Set
\begin{equation}\label{blambda}
\blambda_{\rho}(p) := \inf\{\dist_{\mathbb{S}_V}\big(x,\rho(J_p)x\big); \quad  x\in \mathbb{S}_V\}
\end{equation}
where $J_p$ is any element in $\mathcal{J}_p$ (the definition does not depend on this choice), $\rho(J_p)x$ denotes the image of $x$ by $\rho(J_p)$ and $\dist_{\mathbb{S}_V}$ is the standard Riemannian distance on $\mathbb{S}_V$.

For $u\in \mathscr{H}_{S,\rho}$, its renormalized energy is
$$\Eren(u\vert_{\mathbf{D}_S}) := \liminf_{r\to 0} \big[\E(u\vert_{\mathbf{D}_S\setminus \bigcup_{p\in \mathrm{Punc}_S}\tilde{D}(p,r)}) - \sum_{p\in \mathrm{Punc}_S} \frac{\blambda_{\rho}(p)^2}{4\pi} \log\frac{1}{r}\big].$$
Note that if $\blambda_{\rho}(p)=0$ for each $p\in \mathrm{Punc}_S$, then $\Eren(u\vert_{\mathbf{D}_S})=\E(u\vert_{\mathbf{D}_S})$. 
We can now define the renormalized spherical energy, or ``renormalized energy'' for short:
\begin{defn} \label{def Eren}
The renormalized spherical energy of $(S, \rho)$ is defined as
$$\Eren(S, \rho) := \inf\{\Eren(u\vert_{\mathbf{D}_S}) ; \quad u\in \mathscr{H}_{S,\rho}\}.$$
\end{defn}
The renormalized energy is always finite when $\dim_\mathbb{R}V<\infty$, see \cite{MRVS22,DM23}, or Corollary \ref{coro:basic} (2). 
If $S$ is closed (i.e. has no punctures), then $\E(S,\rho)=\Eren(S,\rho)$.

\begin{rem} [Invariance]
Both $\E(S,\rho)$ and $\Eren(S,\rho)$ remain invariant when we conjugate $\rho$ by a unitary operator. The choice of distinguished conformal parameterization for punctured neighborhoods of the punctures is invariant by conformal diffeomorphism. Hence, $\E(S,\rho)$ and $\Eren(S,\rho)$  induce functions on the product of Teichm\"{u}ller spaces of surfaces and the moduli spaces of representations of their fundamental group.
\end{rem}

\begin{lem}\label{lem:one}
If $\E(S,\rho)$ is finite then $\E(S, \rho)= \Eren(S, \rho)$.
\end{lem}
\begin{proof}
The condition $\E(S,\rho)<\infty$ implies that there is a smooth map $u\in \mathscr{H}_{S,\rho}$ which has finite energy on the fundamental domain $\mathbf{D}_S\subset \tilde{S}$. 
For each puncture $p\in \Punc_S$,  we use the previous notation $\tilde{D}(p,1)$,
the domain $\mathbf{D}_S$ can be chosen so that the closure of  $\mathbf{D}_S\cap \tilde{D}(p,1)$  is conformal to the band $[0,2\pi]\times [0,\infty)$.
Let $\blambda_{\rho}(p)$ be defined as in (\ref{blambda}). Under the conformal  identification ${\mathbf{D}_S\cap \tilde{D}(p,1)} \approx [0,2\pi]\times [0,\infty)$, there is some deck-transformation $J_p$ of $\tilde{S}$ such that $J_p((0,h)) = (2\pi,h)$ for any $h\geq 0$.
Since by conformal invariance, $\E(u\vert_{[0,2\pi]\times [0,\infty)})  = \E(u\vert_{\mathbf{D}_S\cap \tilde{D}(p,1)}) <\infty$, by Fubini's theorem, there are some $h_j\to \infty$ such that 
$$\lim_{j\to \infty}\int_{[0,2\pi]\times\{h_j\}} |du|^2 =0.$$
By Cauchy-Schwarz, this implies that
$$\lim_{j\to \infty} \dist_{\mathbb{S}_V}\big(u((0,h_j)), u((2\pi,h_j))\big)=0.$$
Since $u((2\pi,h_j)) = u(J_p((0,h_j))) = \rho(J_p) u((0,h_j))$ by equivariance of $u$, it means that $\blambda_{\rho}(p)=0$ for all $p\in \Punc_S$.
This in turn implies $\E(u\vert_{\mathbf{D}_S}) = \Eren(u\vert_{\mathbf{D}_S})$ by definition of the renormalized energy, so minimizing over $u$, we conclude $\E(S,\rho) = \Eren(S,\rho).$
\end{proof}

We will need the following known monotonicity property of the renormalized energy, see \cite[Lemma 2.11]{MRVS22} for instance. 
\begin{lem}\label{monotonicity}
For any $u\in \mathscr{H}_{S,\rho}$, and any $0<r_2<r_1\leq 1$,
\begin{align*}
& \E(u\vert_{\mathbf{D}_S\setminus \bigcup_{p\in \mathrm{Punc}_S}\tilde{D}(p,r_1)}) - \sum_{p\in \mathrm{Punc}_S} \frac{\blambda_{\rho}(p)^2}{4\pi} \log\frac{1}{r_1} \\
\leq & \E(u\vert_{\mathbf{D}_S\setminus \bigcup_{p\in \mathrm{Punc}_S}\tilde{D}(p,r_2)}) - \sum_{p\in \mathrm{Punc}_S} \frac{\blambda_{\rho}(p)^2}{4\pi} \log\frac{1}{r_2}\\
\leq & \Eren(u\vert_{\mathbf{D}_S}).
\end{align*}
\end{lem}
\begin{proof}
Given $p\in \Punc_S$, the domain $\mathbf{D}_S$ can be chosen so that the closure of  $\mathbf{D}_S\cap \tilde{D}(p,r_1)\setminus \tilde{D}(p,r_2) $ is conformal to the band $[0,2\pi]\times [\log\frac{1}{r_1},\log\frac{1}{r_2}]$. By definition of $\blambda_\rho(p)$, equivariance of $u$ and Cauchy-Schwarz,
\begin{align*}
\E(u\vert_{\mathbf{D}_S\cap \tilde{D}(p,r_1)\setminus \tilde{D}(p,r_2)}) 
& = \frac{1}{2}\int_{\log\frac{1}{r_1}}^{\log\frac{1}{r_2}} (\int_{[0,2\pi]} |du|^2 d\theta) ds  \\
& \geq \frac{\blambda_{\rho}(p)^2}{4\pi} \log\frac{1}{r_2}  - \frac{\blambda_{\rho}(p)^2}{4\pi} \log\frac{1}{r_1}.
\end{align*}
Summing over $p\in \mathrm{Punc}_S$, we get the first inequality in the statement. The second inequality follows by letting $r_2\to 0$.
\end{proof}

\vspace{1em}

\subsection{Harmonic maps}

Harmonic maps from a surface are smooth maps which are critical points for the energy functional on any compact domain of the surface \cite{SY97}. Geometrically, the main property of the renormalized energy is that, for finite dimensional representations, it is always realized by a harmonic map. 
Let $S$, $\mathbf{D}_S\subset \tilde{S}$, $\rho:\pi_1(S)\to \End(V)$,  $\mathscr{H}_{S,\rho}$ be as before.

\begin{defn}
Any map $\psi \in \mathscr{H}_{S,\rho}$ such that
$$\Eren(\psi\vert_{\mathbf{D}_S}) = \Eren(S, \rho).$$ 
is called a harmonic representative of $(S,\rho)$.
\end{defn}

Harmonic representatives are clearly harmonic maps.
The notion of harmonic representatives \emph{does not} depend on the specific choice of conformal parameterization of punctured neighborhoods of punctures used to define the renormalized energy.
We have the following existence theorem: 
\begin{thm} \label{realize}
If $\dim_\mathbb{R}V<\infty$, then $(S,\rho)$ admits a harmonic representative.
\end{thm}
\begin{proof}
We outline the proof, which follows from standard arguments, see \cite[Proposition 8.1]{MRVS22} \cite[Section 4]{DM23}. 
Consider a sequence of maps $u_j\in \mathscr{H}_{S,\rho}$ whose renormalized energies converge to the infimum $\Eren(S, \rho)$. We use Lemma \ref{monotonicity} to bound uniformly the energy of $u_j$ on any compact subset of $\mathbf{D}_S$ so that classical compactness arguments apply \cite{SY79,SU82} and a susequential limit map exists for each compact subset of $\mathbf{D}_S$. After a diagonal argument and taking a union of these limits, we get the desired map.
\end{proof}

\vspace{1em}

We will need a ``dimension-free'' $\varepsilon$-regularity theorem for harmonic maps into spheres:
\begin{thm}\label{epsilon reg}

For $r>0$, let $\mathbb{D}_r$ be the standard disk in $\mathbb{R}^2$ of radius $r$.
There is a universal constant $\varepsilon_0>0 $, and for each integer $m\geq0$ there is a constant $A_m>0$ such that the following holds.
Let $V$ be a finite  dimensional Hilbert space, with unit sphere $S_V$. Let $\phi: \mathbb{D}_r\to S_V$ be a harmonic map with energy at most $\varepsilon_0$. Then 
$$\|\phi\|_{C^m(\mathbb{D}_{r/2})} \leq \frac{A_m}{r^m}.$$
 
\end{thm}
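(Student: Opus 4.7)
The plan is to reduce to the unit disk by scaling, then combine a critical $\varepsilon$-regularity estimate for the gradient with a standard elliptic bootstrap.

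First, by replacing $\phi$ with $\phi_r(x):=\phi(rx)$, I obtain a harmonic map $\phi_r:D_1\to S_V$ whose Dirichlet energy is the same as that of $\phi$ on $D_r$, by conformal invariance of the energy in dimension two. A bound $\|\phi_r\|_{C^m(D_{1/2})}\leq A_m$ on the unit disk then gives $\|\phi\|_{C^m(D_{r/2})}\leq A_m r^{-m}$ after undoing the scaling. So it suffices to work on $D_1$ and prove that, if $E(\phi)\leq\varepsilon_0$, then $\phi$ is bounded in $C^m(D_{1/2})$ by a constant depending only on $m$ (in particular independent of $\dim V$).

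The key structural input is that, because $S_V$ is the unit sphere of the Hilbert space $V$, the harmonic map equation has the intrinsic form
\[
\Delta \phi = -|\nabla \phi|^2 \phi, \qquad |\phi|=1,
\]
in which every operation uses only the ambient inner product and not the dimension. The first main step is the $\varepsilon$-regularity gradient estimate: there exists $\varepsilon_0>0$ such that
\[
\sup_{D_{3/4}} |\nabla\phi|^2 \leq C \int_{D_1}|\nabla\phi|^2 \leq C\varepsilon_0.
\]
This is a critical estimate in dimension two; I would obtain it either by quoting the Sacks--Uhlenbeck $\varepsilon$-regularity theorem from \cite{SU82} directly, or by running the standard Moser iteration on $u=|\nabla\phi|^2$: a Bochner-type computation using the equation and $|\phi|=1$ yields $\Delta u \geq -C u^2$ pointwise, and combining this differential inequality with the two-dimensional Sobolev embedding and the smallness of $\int_{D_1} u$ produces the desired $L^\infty$ bound.

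Once this is in place, the bootstrap is routine. On $D_{3/4}$ the right-hand side of the equation lies in $L^\infty$, so elliptic $L^p$ theory gives $\phi\in W^{2,p}(D_{2/3})$ for every $p<\infty$ and hence $\phi\in C^{1,\alpha}(D_{2/3})$. Differentiating the equation, the right-hand side then sits in $C^{0,\alpha}$, and interior Schauder estimates applied on a nested sequence of slightly shrinking disks $D_{2/3}\supset D_{2/3-\delta_1}\supset\cdots\supset D_{1/2}$ upgrade $\phi$ to $C^{2,\alpha}$, then $C^{3,\alpha}$, and so on, producing bounds $\|\phi\|_{C^m(D_{1/2})}\leq A_m$ depending only on $m$ and on $\varepsilon_0$. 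I expect the $\varepsilon$-regularity step itself to be the main obstacle, since the nonlinearity $|\nabla\phi|^2$ is borderline for the Sobolev inequality in two dimensions and smallness of the energy is essential; the subsequent higher-order bootstrap is then a mechanical iteration of Schauder estimates whose constants manifestly depend only on the geometry of the equation, not on $\dim V$.
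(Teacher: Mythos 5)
Your proposal follows the same overall strategy as the paper: rescale to the unit disk using conformal invariance, obtain a dimension-free $C^1$ (gradient) bound from $\varepsilon$-regularity --- which works because the only geometric input is the second fundamental form of $S_V\subset V$, whose bound is independent of $\dim V$ --- and then bootstrap. The difference is in the bootstrap. The paper deliberately runs an $L^2$-based induction (the $L^p$ estimate of Gilbarg--Trudinger with $p=2$, plus two-dimensional Sobolev embedding), applied to each coordinate $\phi^i$ and then summed over $i$; this makes the dimension-independence of $A_m$ transparent, because the component-wise $L^2$ quantities add up exactly to the Hilbert-norm quantities $\int|d^k\phi|^2=\sum_i\int|d^k\phi^i|^2$. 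Your $W^{2,p}$/Schauder bootstrap is where you should be more careful: this dimension-independence is precisely the non-classical content of the statement (the paper says so explicitly), and applying Schauder component-wise and then assembling sup-norm bounds on $|d^k\phi|^2=\sum_i|d^k\phi^i|^2$ does not obviously avoid a factor of $\dim V$, since the suprema of the different components need not be attained at the same point. To make your route rigorous you would need to note that interior Schauder and Calder\'on--Zygmund estimates hold verbatim for Hilbert-space-valued functions with the same constants (the kernels are scalar, so the standard proofs go through using the triangle inequality in $V$), or else switch to the paper's $L^2$ bookkeeping. With that one point supplied, your argument is correct; your $\varepsilon$-regularity step (Bochner plus Moser iteration, or quoting Sacks--Uhlenbeck) is a legitimate alternative to the paper's citation of Colding--Minicozzi, again because the sphere's curvature and second fundamental form are dimension-free.
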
  
\begin{proof}
This result is classical \cite{SU81}, except maybe for the fact that  the constants $A_m$ do not depend on the dimension of $V$.
By rescaling and conformal invariance of the energy, it suffices to show the theorem for $r=1$.
A bound on $\|\phi\|_{C^1(\mathbb{D}_{r'})} $ for $r'\in (0,1)$, depending only on $r'$, can be deduced from the proof in \cite[pp 149--151]{CM11}, where only a  $C^2$ bound for the second fundamental form of the sphere $S_V$ as a submanifold of $V$ is used, and not its dimension. See \cite[Lemma 3.12]{KS20}, which made the same observation. 

Since we were not able to find a reference, we sketch below how the uniform bounds for the higher derivatives of $\phi$ follow from a bound on $\|\phi\|_{C^1(\mathbb{D}_{r'})} $ for $r'\in (0,1)$.
If $n=\dim_\mathbb{R} V$, the harmonic map equation is given by $$\Delta \phi^i = - |d\phi|^2 \phi^i \quad \text{for all $i=1,...,n$},$$
where $\phi^i$ are the coordinate functions of $\phi$. 
Thus, up to a factor depending on $m$, $|d^m \Delta \phi^i |$ can be bounded by a sum of terms of the form $|d^{m_0} \phi^i| |d^{m_1} \phi| |d^{m_2} \phi |$ where  $m_0,m_1,m_2$ are integers strictly smaller than $m+2$ such that $m_0+m_1+m_2=m+2$. Let $\mathcal{T}(m)$ be the set of such triples of integers $(m_0,m_1,m_2)$. 
In the remaining of the proof,  $C$ denotes a constant (which can change from line to line) depending on certain parameters, but independent of $n$ and $\phi$.
By the usual $L^p$ estimate \cite[Theorem 9.11]{GT77} with $p=2$, for any $m\geq 0$ and any $\delta>0$,
\begin{align*}
\int_{\mathbb{D}_{r'-\delta}}  |d^{m+2} \phi|^2 & = \int_{\mathbb{D}_{r'-\delta}}  \sum_{i=1}^n  |d^{m+2} \phi^i|^2  \leq C \sum_{i=1}^n \big( \int_{\mathbb{D}_{r'}}   |d^m \Delta\phi^i|^2 + \int_{\mathbb{D}_{r'}}   |d^m\phi^i|^2 \big) \\
 & \leq C\sum_{i=1}^n  \big(\int_{\mathbb{D}_{r'}} \sum_{(m_0,m_1,m_2)\in \mathcal{T}(m)} (|d^{m_0} \phi^i| |d^{m_1} \phi| |d^{m_2} \phi |)^2 
 + \int_{\mathbb{D}_{r'}}   |d^m\phi^i|^2 \big)
 \\
 & \leq C \sum_{(m_0,m_1,m_2)\in \mathcal{T}(m)} \int_{\mathbb{D}_{r'}} |d^{m_0} \phi|^2 |d^{m_1} \phi|^2 |d^{m_2} \phi |^2 + C\int_{\mathbb{D}_{r'}}   |d^m\phi|^2.
 \end{align*}
We already know that $|\phi|^2$ and $|d\phi|^2$ are uniformly bounded in $\mathbb{D}_{r'}$. Suppose that for an integer $m\geq 2$,
$ |d^{t} \phi|^2\leq C$ on $\mathbb{D}_{r'}$ for all $t\leq m-1$, and $\max\{\int_{\mathbb{D}_{r'}}  |d^{m} \phi|^2,\int_{\mathbb{D}_{r'}}  |d^{m+1} \phi|^2\}\leq C$, then the inequalities above imply that $\int_{\mathbb{D}_{r'-\delta}}  |d^{m+2} \phi|^2\leq C$ (since $m+(m+1) >m+2$, each factor in $|d^{m_0} \phi| |d^{m_1} \phi| |d^{m_2} \phi |$ is bounded pointwise by $C$ except maybe one). 
By the Sobolev inequalities, $|d^{m} \phi|^2\leq C$ on $\mathbb{D}_{r'-\delta}$. 
By induction, we deduce that for all $m\geq 0$, $\|\phi\|_{C^m(\mathbb{D}_{r''})} \leq C$, where $r''\in (0,1)$ (the constant $C$ depends on $m,r''$ but not on $n,\phi$).
This concludes the proof.
\end{proof}

\subsection{Spherical Plateau problem}
\label{spb}

The spherical Plateau problem refers to a collection of variational problems depending on an $n$-manifold and an orthogonal representation, and whose solutions (if they exist) are $n$-dimensional minimal surfaces in spheres, invariant under a group action \cite[Section 3]{Antoine23a} \cite{Antoine24b}.
In this subsection, we define an  invariant relevant to the spherical Plateau problem: the spherical area.
This invariant is an important motivation for studying the  energy.
It generalizes an invariant of Besson-Courtois-Gallot \cite[Subsections 3.I and 3.II]{BCG91} to arbitrary unitary representations. 

Let $S$ be a punctured Riemann surface of negative Euler characteristic,
 and let $\Sigma$ be the underlying topological surface. 
Let $\mathcal{T}_\Sigma$ be its Teichm\"{u}ller  space. 
The Riemann surface $S$ determines a class in  Teichm\"{u}ller space called $[S]$, and vice versa any Teichm\"{u}ller class is realized by a punctured Riemann surface $S'$ with underlying topological surface $\Sigma$. Consider a unitary representation.
$$\rho:\pi_1(\Sigma)\to \End(H).$$
Define $\mathbf{D}_\Sigma\subset \tilde{\Sigma}$ and $\mathscr{H}_{\Sigma,\rho}$ in a similar way as for $\mathbf{D}_S\subset \tilde{S}$ and $\mathscr{H}_{S,\rho}$.

\begin{defn} \label{def spherearea bis}
The spherical area of $(\Sigma,\rho)$ is defined as 
$$
\Area(\Sigma,\rho) := \inf\{\E(S', \rho);  \quad [S'] \in \mathscr{T}_\Sigma\}.
$$
\end{defn}
Like the energy, $\Area(\Sigma,\rho) $ could be infinite in general. 
\begin{rem}
If $\Sigma$ is a closed surface, then due to classical arguments (see for instance the proof of \cite[Theorem 3.1]{SY79}), the spherical area of $(\Sigma,\rho)$ satisfies
$$\Area(\Sigma,\rho) = \inf\{ \Area(\mathbf{D}_\Sigma,\phi^*g_V); \quad \phi\in \mathscr{H}_{\Sigma,\rho}\}<\infty.$$
A question for which we do not know the answer is the following: given any  closed surface $\Sigma$, is there a unitary representation $\rho:\pi_1(\Sigma)\to U(n)$ for which $\Area(\Sigma,\rho)>0$?
\end{rem}

Branched minimal surfaces are by definition images of harmonic maps which are weakly conformal.
A well-known general strategy to construct minimal surfaces is to divide an area minimization problem into, first, an energy minimization problem inside a conformal class (leading to the construction of a harmonic map), and then, an energy minimization problem on the Teichm\"{u}ller space of the surface (leading to the desired minimal surface). In our setting, the second step is subtle and will not be treated.  We only state the following known fact:

\begin{thm} \label{min surf thm}
Let $S$ be a punctured Riemann surface, $\Sigma$ the underlying topological surface and $\rho:\pi_1(S)\to \End(H))$ a unitary representation.
Suppose that 
$$0<\E(S,\rho) = \Area(\Sigma,\rho)<\infty.$$ 
Then, for any map $\psi\in \mathscr{H}_{S,\rho}$ such that 
$E(\psi\vert_{\mathbf{D}_S}) = \E(S,\rho)$,  $\psi$ is harmonic and weakly conformal. In other words, $\psi(\tilde{S})$ is a branched minimal surface.  
\end{thm}
\begin{proof}
This is a corollary of a standard result, which  states that a map which is a critical point of the energy with respect to both conformal changes and variations of the conformal class is a branched minimal immersion \cite[Theorem 1.8]{SU81}. 

\end{proof}


\subsection{Energy and area for the regular representation}

Let us describe what happens for the ``standard'' spherical Plateau problem, where the unitary representation is given by the (left) regular representation.  
Let $S$ be a punctured Riemann surface of negative Euler characteristic $\chi(S)<0$, let $\Sigma$ be the underlying topological surface.

The regular representation $\lambda_{\pi_1(S)} : {\pi_1(S)}\to \End(\ell^2({\pi_1(S)},\mathbb{C}))$ of ${\pi_1(S)}$ is the following canonical representation: for all $\gamma,x\in {\pi_1(S)}$ and $f\in \ell^2({\pi_1(S)},\mathbb{C})$,
$$(\lambda_{\pi_1(S)}(\gamma).f) (x) := f(\gamma^{-1}x).$$
It induces a proper free isometric action on the unit sphere $\mathbb{S}_{\ell^2({\pi_1(S)},\mathbb{C})}$ of the Hilbert space $\ell^2({\pi_1(S)},\mathbb{C})$. 
Let $g_{\ell^2({\pi_1(S)},\mathbb{C})}$ be the standard round metric on $\mathbb{S}_{\ell^2({\pi_1(S)},\mathbb{C})}$.
When $\rho=\lambda_{\pi_1(S)}$, the spherical area of $(\Sigma,\rho)$ 
is exactly the ``spherical volume'' of $\Sigma$ considered in \cite{BCG91} \cite{Antoine23a} \cite{Antoine24b}. 
When $\rho=\lambda_{\pi_1(S)}$, the spherical area is also well-defined and easily shown to be equal to $0$ for the 2-sphere and the 2-torus. 
Slightly generalizing Besson-Courtois-Gallot's computation of the standard spherical area of closed surfaces \cite[Remarque 3.1.4 (i)]{BCG91}, we have the following:
\begin{thm}\label{spherearea}
Let $\mathbf{g}$ and $\mathbf{n}$ be respectively the genus and the number of punctures of $S$.
Then
$$\E(S,\lambda_{\pi_1(S)}) =\Area(\Sigma,\lambda_{\pi_1(S)})= \frac{\pi}{4}(2\mathbf{g}+\mathbf{n}-2) = \frac{\pi}{4}|\chi(S)|.$$
\end{thm}
\begin{proof}
Let $g_{\mathrm{hyp}}$ be  the unique  complete finite area hyperbolic metric on $\Sigma$ compatible with the conformal structure  of $S$.
Note that the number $\frac{\pi}{4}(2\mathbf{g}+\mathbf{n}-2) = \frac{\pi}{4}|\chi(S)|$ in Theorem \ref{spherearea} is exactly  $\Area(\Sigma,\frac{1}{8}g_{\mathrm{hyp}})$.

Let $\phi\in \mathscr{H}_{S,\lambda_{\pi_1(S)}}  $ and view it as a $\mathbb{C}$-valued function of two variables $x\in \tilde{S}$ and $\gamma\in {\pi_1(S)}$. Adapting the proof of \cite[Th\'{e}or\`{e}me 3.8]{BCG91}, we get:
 \begin{align*}
 2E(\phi\vert_{\mathbf{D}_S}) & =  2E(\phi\vert_{(\mathbf{D}_\Sigma,g_{\mathrm{hyp}})}) \\
 & = \int_{\mathbf{D}_\Sigma}\sum_{\gamma\in {\pi_1(S)}} |d_1\phi(x,\gamma)|^2 dv_{g_{\mathrm{hyp}}}(x)  =  \int_{\mathbf{D}_\Sigma}\sum_{\gamma\in {\pi_1(S)}} |d_1\phi(\gamma^{-1}x,1)|^2 dv_{g_{\mathrm{hyp}}}(x)\\
 & =  \sum_{\gamma\in {\pi_1(S)}} \int_{\mathbf{D}_\Sigma}|d_1\phi(\gamma^{-1}x,1)|^2 dv_{g_{\mathrm{hyp}}}(x) =  \int_{\tilde{\Sigma}}|d_1\phi(x,1)|^2 dv_{g_{\mathrm{hyp}}}(x)\\
 & \geq \frac{1}{4} \int_{\tilde{\Sigma}}|\phi(x,1)|^2 dv_{g_{\mathrm{hyp}}}(x)= \frac{1}{4}  \sum_{\gamma\in {\pi_1(S)}} \int_{\mathbf{D}_\Sigma}|\phi(\gamma^{-1}x,1)|^2 dv_{g_{\mathrm{hyp}}}(x)\\
 & = \frac{1}{4}  \sum_{\gamma\in {\pi_1(S)}} \int_{\mathbf{D}_\Sigma}|\phi(x,\gamma)|^2 dv_{g_{\mathrm{hyp}}}(x) = \frac{1}{4} \Area(\Sigma,g_{\mathrm{hyp}}).
 \end{align*}
 The second line holds by ${\pi_1(S)}$-equivariance of $\phi$. The inequality comes from the well-known fact that the bottom of the Laplace spectrum for $W^{1,2}$ functions on the hyperbolic plane $(\tilde{\Sigma},g_{\mathrm{hyp}})$ is $\frac{1}{4}$, and the last equality comes from  the fact that $\sum_{\gamma\in {\pi_1(S)}} |\phi(x,\gamma)|^2 =1$ for all $x$.  Note that we implicitly used the fact that $(\Sigma,g_{\mathrm{hyp}})$ has finite area to ensure that $\phi(.,1)\in W^{1,2}(\tilde{\Sigma})$. 
Since this computation holds for any  $S$ and any $\phi\in \mathscr{H}_{S,\lambda_{\pi_1(S)}}  $,
 \begin{equation}\label{deux ineq}
 \E(S,\lambda_{\pi_1(S)})\geq\Area(\Sigma,\lambda_{\pi_1(S)})\geq \Area(S,\frac{1}{8}g_{\mathrm{hyp}}).
 \end{equation}

To show the reverse inequalities, for any $c>1$, consider the family of maps
\begin{align}\label{poissonc}
\begin{split}
\mathcal{P}_c &:  \tilde{S} \to  \mathbb{S}_{\ell^2({\pi_1(S)},\mathbb{C})}\subset \ell^2({\pi_1(S)},\mathbb{C})\\
 & x\mapsto \{\gamma \mapsto \frac{1}{\|e^{-\frac{c}{2}\dist_{g_{\mathrm{hyp}}}(x,.)}\|_{L^2(\tilde{S},g_{\mathrm{hyp}})}}   \big[\int_{\gamma. \mathbf{D}_S}  e^{-c\dist_{g_{\mathrm{hyp}}}(x,.)} dv_{g_{\mathrm{hyp}}}\big]^{1/2}\}.
 \end{split}
 \end{align}
It is simple to check that $\mathcal{P}_c\in  \mathscr{H}_{S,\lambda_{\pi_1(S)}} $. Furthermore, by \cite[Lemma 1.3]{Antoine23b}, for any $x\in \tilde{S}$, and any orthonormal basis $\{e_1,e_2\}$ of $T_x\tilde{S}$,
$$|d_x\mathcal{P}_c(e_1)|^2+|d_x\mathcal{P}_c(e_2)|^2 \leq \frac{c^2}{4}.$$
Thus, by letting $c$ go to $1$, we get
\begin{equation}\label{une eq e}
\E(S,\lambda_{\pi_1(S)}) \leq \Area(\Sigma,\frac{1}{8}g_{\mathrm{hyp}}).
\end{equation}
Together with (\ref{deux ineq}), this finishes the proof.

\end{proof}

 The notion of direct sum for Hilbert spaces and unitary representations is defined in \cite[Definition A.1.6]{BDLHV08}. 
 Let $\bigoplus^\infty\lambda_{{\pi_1(S)}}$ be the infinite direct sum of $\lambda_{{\pi_1(S)}}$. The corresponding Hilbert space is $\bigoplus^\infty\ell^2({\pi_1(S)},\mathbb{C})$. 
 We will later need the following:
\begin{lem}\label{=+}
We have $$\E(S,\lambda_{\pi_1(S)}) = \Area(\Sigma,\lambda_{{\pi_1(S)}})=\E(S,\bigoplus^\infty\lambda_{{\pi_1(S)}}) = \Area(\Sigma,\bigoplus^\infty\lambda_{{\pi_1(S)}}).$$
\end{lem}
 \begin{proof}
Clearly, since $\lambda_{\pi_1(S)}$ is a subrepresentation of $\bigoplus^\infty\lambda_{{\pi_1(S)}}$, we have
$$\E(S,\lambda_{\pi_1(S)})  \geq \E(S,\bigoplus^\infty\lambda_{{\pi_1(S)}})\quad \text{and}\quad  \Area(\Sigma,\lambda_{{\pi_1(S)}}) \geq \Area(\Sigma,\bigoplus^\infty\lambda_{{\pi_1(S)}}).$$
Next,  we can rewrite $\bigoplus^\infty\ell^2({\pi_1(S)},\mathbb{C})$ as $\ell^2({\pi_1(S)},H)$ where $H$ is the separable infinite dimensional Hilbert space. Define the following map between the unit spheres
$$\mathcal{A} :\mathbb{S}_{\ell^2({\pi_1(S)},H)}\to \mathbb{S}_{\ell^2({\pi_1(S)},\mathbb{C})}$$
such that for any $f\in \mathbb{S}_{\ell^2({\pi_1(S)},H)}$ and $\gamma\in {\pi_1(S)}$, $\mathcal{A}(f)(\gamma) := |f(\gamma)|_H$ where $|.|_H$ denotes the norm in $H$. Clearly, $\mathcal{A}$ is distance non-increasing and equivariant.  
If $\phi\in \mathscr{H}_{\Sigma,\bigoplus^\infty\lambda_{{\pi_1(S)}}}$, then $\mathcal{A}\circ \phi$ is still equivariant. For any $\epsilon>0$, after smoothing $\mathcal{A}\circ \phi$, we get a map $\phi' \in \mathscr{H}_{S,\lambda_{{\pi_1(S)}}}$ such that $E(\phi'\vert_{\mathbf{D}_S}) \leq E(\phi\vert_{\mathbf{D}_S}) +\epsilon$ and so
$$\E(S,\lambda_{\pi_1(S)})  \leq \E(S,\bigoplus^\infty\lambda_{{\pi_1(S)}}).$$
Taking the infimum over conformal classes $[S]\in \mathcal{T}_\Sigma$, this implies
$$\Area(\Sigma,\lambda_{{\pi_1(S)}}) \leq \Area(\Sigma,\bigoplus^\infty\lambda_{{\pi_1(S)}}).$$
 \end{proof}

Next, we consider a special unitary representation and a special embedding $\mathscr{P}$ of the hyperbolic plane into a Hilbert sphere, closely related to the maps $\mathcal{P}_c$ defined earlier in (\ref{poissonc}), and which played a crucial role in Besson-Courtois-Gallot's paper on the entropy inequality \cite{BCG95} (see \cite[Subsection 4.2]{Antoine23a}).
First, if $g_{\mathrm{hyp}}$ is as usual the unique conformal hyperbolic metric on $S$, we view $\pi_1(S)$ as a subgroup of the oriented  isometry group $\mathrm{PSL_2(\mathbb{R})}$ of the hyperbolic plane $(\tilde{S},g_{\mathrm{hyp}})$. Fix a basepoint $\mathbf{o}\in \tilde{S}$, let $\partial \tilde{S}$ be the boundary at infinity of $\tilde{S}$ with the standard uniform probability measure determined by $\mathbf{o}$, and let $\mathbb{S}_2(\partial \tilde{S})$ be the unit sphere in the complex $L^2$-space $L^2(\partial \tilde{S})$ endowed with the standard Riemannian metric $g_{L^2(\partial \tilde{S})}$.  For any $\theta\in \partial \tilde{S}$, the corresponding Busemann function is defined for any $x\in \tilde{S}$ as
$$B_\theta(x) := \lim_{t\to \infty} (\dist_{g_{\mathrm{hyp}}}(y,c(t)) -t)$$  
where $c:[0,\infty)$ is the half-geodesic starting at $\mathbf{o}$, and converging to $\theta$. 
The group ${\pi_1(S)}$ acts naturally on $\tilde{S}$ and $\partial \tilde{S}$ as a subgroup of $\mathrm{PSL_2(\mathbb{R})}$. This induces a unitary representation, called ``boundary representation'':
$$\underline{\rho}_B :{\pi_1(S)}\to \End(L^2(\partial \tilde{S}))$$
which is defined, for all $\gamma\in {\pi_1(S)}$, $f\in \mathbb{S}_2(\partial \tilde{S})$, by
\begin{equation}\label{rhoB}
\underline{\rho}_B(\gamma). f (\theta) = f(\gamma^{-1}(\theta)) e^{-\frac{1}{2}B_\theta(\gamma(\mathbf{o}))},
\end{equation}
see \cite[Lemme 2.2]{BCG95}.
Next, consider the following embedding
\begin{equation}\label{mathscrP}
\begin{split}
\mathscr{P} : (\tilde{S},g_{\mathrm{hyp}}) & \to (\mathbb{S}_2(\partial \tilde{S}), g_{L^2(\partial \tilde{S})})\\
\mathscr{P}(x) &:= \{\theta \mapsto e^{-\frac{1}{2}B_\theta(x)}\}.
\end{split}
\end{equation}
As explained in \cite[Section 2]{BCG95}, this map is ${\pi_1(S)}$-equivariant, namely it belongs to $\mathscr{H}_{S,\underline{\rho}_B}$.
The notions of weak equivalence and weak containment for representations are recalled in Definition \ref{weak equiv} in the Appendix.
We need the following facts about $\underline{\rho}_B$ and $\mathscr{P}$:
\begin{lem}\label{recallli}
\begin{enumerate}
\item The boundary representation $\underline{\rho}_B$ is irreducible, and weakly equivalent to the regular representation $\lambda_{\pi_1(S)}$.
\item
The map $\mathscr{P}$ is an isometric embedding of the rescaled hyperbolic plane $(\tilde{S},\frac{1}{8}g_{\mathrm{hyp}})$ into $\mathbb{S}_2(\partial \tilde{S})$. In particular, 
$$\E(\mathscr{P}\vert_{\mathbf{D}_S})=\Area(\Sigma,\lambda_{\pi_1(S)})= \frac{\pi}{4}(2\mathbf{g}+\mathbf{n}-2).$$
\end{enumerate}
\end{lem}
\begin{proof}
(1) The group ${\pi_1(S)}$ is a lattice of the simple Lie group $\mathrm{PSL}_2(\mathbb{R})$ of isometries of the hyperbolic plane. The representation $\underline{\rho}_B$ is the restriction of a unitary representation $\hat{\rho}_B: \mathrm{PSL}_2(\mathbb{R})\to \End(L^2(\partial \tilde{S}))$ defined by the generalization of formula (\ref{rhoB}) to the whole Lie group $\mathrm{PSL}_2(\mathbb{R})$. 
Actually,  $\hat{\rho}_B$ can be identified with the irreducible representation from the principal series called $\rho_0$ in the notation of \cite[Theorem 5.2.3]{Lubotzky94}, where the author reviews the classification of irreducible representations of $\mathrm{PSL}_2(\mathbb{R})$ containing $SO(2)/\{\pm 1\}$-invariant vectors. 
Indeed, the representation $\hat{\rho}_B$ is cyclic (\cite[Definition C.4.8]{BDLHV08}): for instance it can be easily checked that for any nonzero $f\in L^2(\partial \tilde{S})$, there is some $g\in\mathrm{PSL}_2(\mathbb{R}) $ such that $\langle \hat{\rho}_B(g) 1,f\rangle_{L^2(\partial \tilde{S})} \neq 0$. 
Moreover, the constant function $1$ in $L^2(\partial \tilde{S})$ is fixed by  $SO(2)/\{\pm 1\}$, and 
$$\varphi :g\mapsto \langle \hat{\rho}_B(g) 1,1\rangle_{L^2(\partial \tilde{S})}$$ is a spherical function in the sense of \cite[Definition 5.1.5]{Lubotzky94}.
As a function on the hyperbolic plane, by a direct computation using (\ref{rhoB}), it satisfies the equation $\Delta \varphi =-\frac{1}{4}\varphi$. 
Since this spherical function uniquely determines the cyclic representation $\hat{\rho}_B$ by the GNS construction \cite[Theorem C.4.10]{BDLHV08}, we can identify  $\hat{\rho}_B$ with $\rho_0$ from the principal series by  \cite[Definition 5.1.5]{Lubotzky94}, as claimed. In particular, $\hat{\rho}_B$ is not a discrete series, since those do not contain $SO(2)/\{\pm 1\}$-invariant vectors.
The irreducibility of $\underline{\rho}_B$ is then a consequence of \cite[Proposition 2.5 (ii)]{CS91}. 

By $C^*$-simplicity of surface groups such as $\pi_1(S)$
\cite[Definitions and examples on pages 2 and 3]{DLH07}, in order to show that $\underline{\rho}_B$ is weakly equivalent to  $\lambda_{\pi_1(S)}$, it suffices to show that the first is weakly contained in the second.
The fact that $\underline{\rho}_B$ is weakly contained in $\lambda_{\pi_1(S)}$ follows from the classical property that $\hat{\rho}_B$ is weakly contained in $\lambda_{\mathrm{PSL}_2(\mathbb{R})}$, or can be checked directly by ``approximating by hand'' functions in $L^2(\partial \tilde{S})$ by functions in $\ell^2({\pi_1(S)},\mathbb{C})$. 

(2) Checking that $\mathscr{P}$ is an isometry is a computation done in \cite[2.6 Retour aux exemples. a)]{BCG95} (translating their notations to our case, $n=2$, $h_0=1$, $g_{\sqrt{p_0}} = \mathscr{P}^* g_{L^2(\partial \tilde{S})}$).

\end{proof}

\begin{rem}
As we will see later, the relevance of the special embedding $\mathscr{P}$ is that it models the limit of equivariant harmonic maps into finite-dimensional spheres, which are equivariant with respect to strongly converging representations. What are the other possible limits of equivariant harmonic maps into finite-dimensional spheres?
\end{rem}

\vspace{1em}

\section{Intrinsic uniqueness of equivariantly minimizing surfaces} \label{section 2}

The goal of this subsection is to prove a general intrinsic uniqueness result for equivariant minimal surfaces in spheres, which seems to be the first rigidity result in this spirit.
This will play a key role: we will use it to identify the unique limit of the pullback metric by the harmonic maps of the main theorem.

Let $S$ be a punctured Riemann surface, with universal cover $\tilde{S}$, and fundamental domain $\mathbf{D}_S$. 
Let $g$ be any Riemannian metric on $S$ compatible with its conformal structure.
Let $\Sigma$ be the topological surface underlying $S$. 
Consider a unitary representation $\rho:{\pi_1(S)} \to\End(H')$ and its infinite direct sum (see \cite[Definition A.1.6]{BDLHV08}):
$$\bigoplus^\infty \rho :{\pi_1(S)} \to \End(\bigoplus^\infty H').$$ The main result of this subsection is that in the conformal class of $S $, there is at most one Riemannian metric $g_1$ for which there exists an equivariant,  branched, minimal, isometric immersion from $(\tilde{S} ,g_1)$ to the unit sphere $\mathbb{S}_{\bigoplus^\infty H'}$, which achieves $\Area(\Sigma,\bigoplus^\infty\rho)$.
We take the infinite direct sum of $\rho$, because the ``self-similarity'' of $\bigoplus^\infty\rho$ turns out to be an extremely helpful feature of the infinite dimensional setting.\footnote{The result probably fails for $\rho$ in place of $\bigoplus^\infty\rho$.}

\begin{thm}\label{conformal uniqueness}
Consider two functions $f_1,f_2: \tilde{S}\to [0,\infty)$ which are allowed to vanish at a discrete set of points. Suppose that for each $i=1,2$,
there is a branched, minimal, isometric immersion 
$$\varphi_i : (\tilde{S} ,f_i^2g) \to \mathbb{S}_{\bigoplus^\infty H'}$$
which is ${\pi_1(S)} $-equivariant with respect to $\bigoplus^\infty \rho$ and such that
$$\E(\varphi_i\vert_{\mathbf{D}_S}) = \E(S, \bigoplus^\infty \rho).$$
Then $f_1=f_2$.

\end{thm}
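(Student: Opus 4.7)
The strategy is to average the two immersions into a single equivariant map that still realizes the spherical area, then extract a pointwise identity via Takahashi's theorem. Identifying $\bigoplus^\infty H' \oplus \bigoplus^\infty H'$ with $\bigoplus^\infty H'$ as a unitary $\Gamma$-representation (this is the very reason the hypothesis is stated for $\bigoplus^\infty \rho$ rather than $\rho$), I would set
\[
\psi := \tfrac{1}{\sqrt{2}}(\varphi_1,\varphi_2) : \tilde{\Sigma} \to S_{\bigoplus^\infty H'}.
\]
Then $\psi$ is $\Gamma$-equivariant with respect to $\bigoplus^\infty \rho$ and satisfies $|\psi|^2 = 1$. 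Two observations put $\psi$ on equal footing with the $\varphi_i$. First, its energy is the average
\[
E(\psi\vert_{(D_\Sigma,g_0)}) = \tfrac12\bigl(E(\varphi_1\vert_{D_\Sigma}) + E(\varphi_2\vert_{D_\Sigma})\bigr) = \spherearea(\Sigma,\bigoplus^\infty \rho),
\]
so $\psi$ minimizes energy in $\mathscr{H}_{\Sigma,\bigoplus^\infty \rho}$ and is therefore harmonic. Second, its pullback is automatically conformal to $g_0$:
\[
\psi^*g_V = \tfrac12\bigl(\varphi_1^* g_V + \varphi_2^* g_V\bigr) = \tfrac{f_1^2 + f_2^2}{2}\, g_0 =: h^2 g_0,
\]
so $\psi$ is a weakly conformal harmonic map, i.e.\ a branched minimal isometric immersion of $(\tilde{\Sigma}, h^2 g_0)$ into the unit sphere.

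I would then apply Takahashi's theorem to the three minimal isometric immersions. In dimension two it yields $\Delta_{f_i^2 g_0}\varphi_i = 2\varphi_i$ on the unbranched locus; conformal invariance of the Laplacian in dimension two rewrites this as $\Delta_{g_0}\varphi_i = 2 f_i^2 \varphi_i$, and since both sides are smooth on $\tilde{\Sigma}$ and agree on the open dense unbranched set, the identity extends everywhere by continuity. Applying the same reasoning to $\psi$ and reading off each factor yields
\[
\Delta_{g_0}\varphi_i = 2 h^2 \varphi_i, \qquad i = 1,2,
\]
on all of $\tilde{\Sigma}$. Subtracting the two identities produces the Hilbert-space valued equation $(h^2 - f_i^2)\, \varphi_i = 0$ on $\tilde{\Sigma}$.

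Since $|\varphi_i(x)| = 1$ for every $x$, the vector $\varphi_i(x) \in \bigoplus^\infty H'$ is never zero, so the scalar factor must vanish: $h^2 \equiv f_i^2$ on $\tilde{\Sigma}$ for $i = 1,2$. Therefore $f_1^2 = h^2 = f_2^2$ identically, and the nonnegativity of $f_1, f_2$ gives $f_1 = f_2$. The step most in need of care is the extension of the Takahashi identities across the discrete branch locus; this is handled by continuity since $\varphi_i$ and $\psi$ are smooth on $\tilde{\Sigma}$ and the background metric $g_0$ is nondegenerate, so both sides of the PDEs written with respect to $g_0$ are continuous up to and through the branch points. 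It is also worth emphasizing that the conclusion hinges on working in the infinite direct sum: without the absorption $\bigoplus^\infty \rho \oplus \bigoplus^\infty \rho \cong \bigoplus^\infty \rho$, the averaged map $\psi$ would escape the representation class and the equality $E(\psi) = \spherearea$ would lose its meaning.
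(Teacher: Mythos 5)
Your proposal is correct and follows essentially the same route as the paper: average the two immersions into $S_{\bigoplus^\infty H'\oplus\bigoplus^\infty H'}\cong S_{\bigoplus^\infty H'}$, observe the average still realizes the spherical area and is therefore harmonic, and compare the Takahashi identities componentwise. Your final step (subtracting the two eigenvalue equations and using $|\varphi_i|\equiv 1$) is in fact a slightly more explicit rendering of the paper's "compatibility" argument in its Lemma on the averaged immersion.
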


\begin{rem}
A consequence of Theorem \ref{conformal uniqueness} is that surfaces in spheres which are equivariant with respect to a ``self-similar'' representation and equivariantly area-minimizing  are intrinsically unique in each conformal class (if they exist). Are they also extrinsically unique?
\end{rem}

Theorem \ref{conformal uniqueness}
will directly follow from the next lemma.
Let $H$, $K$ be two Hilbert spaces, whose direct sum is denoted by $H\oplus K$ and is endowed with the direct sum of the scalar products of $H$, $K$. Denote their respective unit spheres by $\mathbb{S}_{H}$, $\mathbb{S}_K$, $\mathbb{S}_{H\oplus K}$.
Consider two conformal factors $f_1,f_2:\tilde{S}\to [0,\infty)$ which are allowed to vanish at a discrete set of points, and the two conformal metrics
$f_1^2g$, $f_2^2g.$ Suppose that there are two minimal isometric branched immersions 
$$\varphi_1: (\tilde{S},f_1^2g) \to \mathbb{S}_H,\quad  \varphi_2: (\tilde{S},f_2^2g) \to \mathbb{S}_K.$$
By the classical result of Takahashi   
(adapted to minimal surfaces in Hilbert spheres) \cite{Takahashi66}, and by conformal invariance of the Laplacian in dimension $2$, we have around points where $f_1\neq 0, f_2\neq 0$:
\begin{equation}\label{minimal equation}
\Delta_{g} \varphi_1 = -2 f_1^2 \varphi_1, \quad \Delta_{g} \varphi_2 = -2 f_2^2 \varphi_2.
\end{equation}
Define the ``average'' branched immersion of $\varphi_1,\varphi_2$ to be 
$$\varphi_3:\tilde{S}  \to \mathbb{S}_{H\oplus K},$$
$$\varphi_3(x):= \frac{1}{\sqrt{2}} (\varphi_1(x)\oplus \varphi_2(x)).$$
Note that $\varphi_3$ is automatically a conformal map, and that 
\begin{equation}\label{average e}
\E(\varphi_3\vert_{\mathbf{D}_S}) = \frac{1}{2}(\E(\varphi_1\vert_{\mathbf{D}_S}) +\E(\varphi_2\vert_{\mathbf{D}_S}) ).
\end{equation}

\begin{lem} \label{l}
With the notations above, if the average branched immersion $\varphi_3$ is harmonic, then $f_1=f_2$.
\end{lem}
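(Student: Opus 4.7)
The plan is a direct computation using three ingredients: Takahashi's equation for each $\varphi_i$ recorded in (\ref{minimal equation}), linearity of $\Delta_{g_0}$ on the direct sum, and the harmonic map equation into a unit Hilbert sphere. The calculation will force $f_1^2=f_2^2$ pointwise away from the branch locus, and continuity will then give $f_1=f_2$ everywhere.

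First I would record that since $\varphi_i:(\tilde\Sigma,h_i)\to S_{H}$ (resp.\ $S_K$) is an isometric branched immersion with $h_i=f_i^2g_0$, the energy density in $g_0$ equals
\[
|d\varphi_i|^2_{g_0}=\tr_{g_0}(f_i^2g_0)=2f_i^2,
\]
and hence, at points where $f_1,f_2\neq 0$, equation (\ref{minimal equation}) can be read as the tension-field equation for a harmonic map into the sphere,
\[
\Delta_{g_0}\varphi_i=-|d\varphi_i|^2_{g_0}\,\varphi_i=-2f_i^2\,\varphi_i.
\]
Correspondingly, by linearity on the direct sum,
\[
\Delta_{g_0}\varphi_3=\tfrac{1}{\sqrt 2}\bigl(\Delta_{g_0}\varphi_1\oplus\Delta_{g_0}\varphi_2\bigr)=-\sqrt 2\,\bigl(f_1^2\varphi_1\oplus f_2^2\varphi_2\bigr).
\]

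Next I would exploit the standing assumption that $\varphi_3$ is harmonic. Since $\varphi_3$ takes values in $S_{H\oplus K}$, harmonicity is equivalent to the tension field equation
\[
\Delta_{g_0}\varphi_3=-|d\varphi_3|^2_{g_0}\,\varphi_3.
\]
A one-line calculation using the direct-sum structure gives
\[
|d\varphi_3|^2_{g_0}=\tfrac{1}{2}\bigl(|d\varphi_1|^2_{g_0}+|d\varphi_2|^2_{g_0}\bigr)=f_1^2+f_2^2,
\]
so the harmonic map equation for $\varphi_3$ reads
\[
\Delta_{g_0}\varphi_3=-\tfrac{1}{\sqrt 2}(f_1^2+f_2^2)\bigl(\varphi_1\oplus\varphi_2\bigr).
\]

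Finally, I would compare the two expressions for $\Delta_{g_0}\varphi_3$ componentwise in $H\oplus K$. Away from the (discrete) branch set one has $\varphi_1(x)\neq 0$ and $\varphi_2(x)\neq 0$; projecting onto the $H$-component of the identity
\[
-\sqrt 2\,\bigl(f_1^2\varphi_1\oplus f_2^2\varphi_2\bigr)=-\tfrac{1}{\sqrt 2}(f_1^2+f_2^2)\bigl(\varphi_1\oplus\varphi_2\bigr)
\]
gives $2f_1^2=f_1^2+f_2^2$, i.e.\ $f_1^2=f_2^2$, at every non-branch point; the $K$-component gives the same relation. Since $f_1,f_2\ge 0$ are continuous, this equality extends across the branch locus and yields $f_1=f_2$ on all of $\tilde\Sigma$. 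There is no genuine obstacle here: once the tension-field formulation of the minimal surface and harmonic map equations into a Hilbert sphere is in place, the lemma is a one-shot componentwise comparison, and the only subtlety is the handling of branch points, which is settled by continuity.
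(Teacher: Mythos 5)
Your proof is correct and takes essentially the same route as the paper: both use the Takahashi/tension-field equation for each $\varphi_i$, linearity of $\Delta_{g_0}$ on the direct sum, and a componentwise comparison with the harmonic map equation for $\varphi_3$. The only (harmless) difference is that you identify the scalar $-|d\varphi_3|^2_{g_0}=-(f_1^2+f_2^2)$ explicitly, whereas the paper simply writes $\Delta_{g_0}\varphi_3=a_3\varphi_3$ for an unspecified real function $a_3$ and compares componentwise without naming it.
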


\begin{proof}

If $\varphi_3$ is harmonic then
$$\Delta_{g} \varphi_3 = a_3 \varphi_3$$
for some function $a_3:\tilde{S}\to \mathbb{R}$.
On the other hand, by definition of $\varphi_3$, and by (\ref{minimal equation}):
$$\Delta_{g} \varphi_3 = \frac{1}{\sqrt{2}} (\Delta_{g}\varphi_1\oplus \Delta_{g} \varphi_2)= -\sqrt{2}( f_1^2 \varphi_1\oplus f_2 ^2 \varphi_2).$$
In order for these two equations to be compatible, necessarily
${f_1}={f_2}$.

\end{proof}

\begin{proof}[Proof of Theorem \ref{conformal uniqueness}]
Set  $H=K=\bigoplus^\infty H'$. A crucial remark is that $\bigoplus^\infty \rho$ is self-similar, in the sense that the representation 
$(\bigoplus^\infty \rho \oplus \bigoplus^\infty\rho, H\oplus K)$
 is equivalent to $(\bigoplus^\infty\rho, H)$. Let us write this fact as 
 \begin{equation}\label{cruciial}
 \bigoplus^\infty \rho \oplus \bigoplus^\infty\rho = \bigoplus^\infty\rho.
\end{equation}
 
Under our assumptions, the average $\varphi_3$ of $\varphi_1,\varphi_2$ is a branched conformal immersion 
$$\varphi_3: \tilde{S} \to \mathbb{S}_{H\oplus K} = \mathbb{S}_{\bigoplus^\infty H'\oplus \bigoplus^\infty H'}.$$
Moreover, $\varphi_3$ is ${\pi_1(S)}$-equivariant with respect to $\bigoplus^\infty \rho \oplus \bigoplus^\infty\rho$, and by (\ref{average e}), its energy necessarily satisfies
$$\E(\varphi_3\vert_{\mathbf{D}_S}) = \E(S , \bigoplus^\infty \rho).$$
By (\ref{cruciial}), $\varphi_3$ can be viewed as a map taking values in $\mathbb{S}_{\bigoplus^\infty H'}$ and equivariant with respect to $\bigoplus^\infty \rho$. 
By definition of the spherical  energy, this implies that $\varphi_3$ is also harmonic.
Applying Lemma \ref{l}, we conclude the proof.
\end{proof}

We will need the following consequence of Theorem \ref{conformal uniqueness} for the regular representation, which has a higher-dimensional  analogue \cite[Corollary 4.3]{Antoine23a} (proved using completely different methods based on the barycenter map \cite{BCG95}). Let $\mathbf{g}$ and $\mathbf{n}$ be the genus and number of punctures of $S$, and let  $g_{\mathrm{hyp}}$ denote the unique complete, finite area, conformal hyperbolic metric on the Riemannian surface $S$ or its lift to $\tilde{S}$. The notions of weak equivalence and weak containment for representations, denoted with the symbols $\sim$ and $\prec$ respectively,  are recalled in Definition \ref{weak equiv} in the Appendix.

\begin{cor}\label{slack}
 Let $\rho_1:{\pi_1(S)} \to\End(H_1)$ be  a unitary representation with $\rho_1\sim \lambda_{{\pi_1(S)} }$ and let $(\mathbb{S}_{H_1},g_{H_1})$ be the unit sphere in $H_1$ with its standard Riemannian metric.
Consider a smooth ${\pi_1(S)} $-equivariant map
$$\varphi_1 : \tilde{S} \to (\mathbb{S}_{H_1},g_{H_1}).$$
Then 
$$\E(\varphi_1\vert_{\mathbf{D}_S}) \geq \Area(\Sigma, \lambda_{{\pi_1(S)} })=\frac{\pi}{4}(2\mathbf{g}+\mathbf{n}-2)=\frac{\pi}{4}|\chi(S)|$$
and equality holds if and only if
$$\varphi_1^* g_{H_1} = \frac{1}{8}g_{\mathrm{hyp}}.$$

\end{cor}

\begin{proof}
The value of $\Area(\Sigma, \lambda_{{\pi_1(S)} })$ is computed in Theorem \ref{spherearea}.
Consider the unitary representation 
$$\rho:=\rho_1\oplus \underline{\rho}_B: {\pi_1(S)}  \to \End(H_1\oplus L^2(\partial \tilde{S} ))$$
where $\underline{\rho}_B$ is the boundary representation defined in (\ref{rhoB}), and the corresponding Hilbert space
$$H':=H_1\oplus L^2(\partial \tilde{S} ).$$
Note that since $\rho_1 \prec \lambda_{\pi_1(S)}$ and $\underline{\rho}_B\prec \lambda_{\pi_1(S)}$,
$$\bigoplus^\infty \rho \prec \lambda_{{\pi_1(S)} }.$$
Thanks to this and Corollary \ref{coree} (1) which we will prove in Section \ref{section:bounds}, we obtain
\begin{equation}\label{bydefe2}
 \E(S,\bigoplus^\infty \lambda_{{\pi_1(S)} }) \leq \E(S,\bigoplus^\infty \rho).
 \end{equation}
 Since $(\rho_1,H_1)$ is a subrepresentation of $(\rho,H')$ and thus  of $(\bigoplus^\infty\rho,\bigoplus^\infty H')$, the map $\varphi_1$ can be viewed as taking values into $\mathbb{S}_{\bigoplus^\infty H'}$ and  equivariant with respect to $\bigoplus^\infty \rho$.
 We then have by definition of energy:
\begin{equation}\label{bydefe}
 \E(S,\bigoplus^\infty \rho)\leq \E(\varphi_1\vert_{\mathbf{D}_S}).
\end{equation}
On the other hand,  from  Lemma \ref{=+} and Theorem \ref{spherearea},
\begin{align}\label{chain =}
\E(S,\bigoplus^\infty \lambda_{{\pi_1(S)} }) & =\Area(\Sigma ,\bigoplus^\infty \lambda_{{\pi_1(S)} })  = \Area(\Sigma ,\lambda_{{\pi_1(S)} })  = \E(S, \lambda_{{\pi_1(S)} }) = \frac{\pi}{4}(2\mathbf{g}+\mathbf{n}-2).
\end{align}
Hence, by (\ref{bydefe}), (\ref{bydefe2}) and the line above, we already obtain the inequality in the statement of the corollary (modulo Corollary \ref{coree} (1) in Section \ref{section:bounds}).

Suppose that equality holds: $\E(\varphi_1\vert_{\mathbf{D}_S}) = \Area(\Sigma, \lambda_{{\pi_1(S)} })$.
Together with (\ref{bydefe}), (\ref{bydefe2}) and (\ref{chain =}), we deduce
 $$\E(\varphi_1\vert_{\mathbf{D}_S}) = \E(S ,\bigoplus^\infty \rho).$$
So by Theorem \ref{min surf thm}, $\varphi_1$ is both harmonic and weakly conformal, namely a branched minimal immersion.
Let us compare the following two maps:
\begin{itemize}
\item the branched minimal immersion $\varphi_1$, equivariant with respect to $\rho_1$,
\item  the minimal embedding $\mathscr{P}$ defined in (\ref{mathscrP}), equivariant with respect to $\underline{\rho}_B$. 
\end{itemize}
Since $(\rho_1,H_1)$ and $(\underline{\rho}_B, L^2(\partial \tilde{S}))$ are subrepresentations of $(\bigoplus^\infty\rho,\bigoplus^\infty H')$, these two maps can both be viewed as maps into the sphere $\mathbb{S}_{\bigoplus^\infty H'}$ which are equivariant with respect to $\bigoplus^\infty \rho$.
Besides, by our assumption and by Lemma \ref{recallli} (2), both have energy $\Area(\Sigma, \lambda_{{\pi_1(S)} })=\frac{\pi}{4}(2\mathbf{g}+\mathbf{n}-2)$ on the fundamental domain $\mathbf{D}_S$. By Lemma \ref{recallli} (2), $\mathscr{P}$ is in fact an isometric embedding of the rescaled hyperbolic plane $(\tilde{S},\frac{1}{8}g_{\mathrm{hyp}})$ where $g_{\mathrm{hyp}}$ is the lift of the unique conformal hyperbolic metric on $S$.  
Applying the intrinsic uniqueness result, Theorem \ref{conformal uniqueness}, we deduce  that 
$$\varphi_1^*g_{H_1} =\frac{1}{8}g_{\mathrm{hyp}}$$ and the corollary is proved. 
\end{proof}

\vspace{1em}

\section{Bounds on the renormalized energy}\label{section:bounds}
This section extracts the geometric consequences of notions from representation theory like weak containment.

\subsection{Weak containment and approximations of equivariant maps} \label{approx subsection}

In this subsection, we discuss how the notion of weak containment of representations  relates to geometric approximations of maps from compact regions of the universal cover of a surface. 

Let $S$ be a punctured Riemann surface. Let $\tilde{S}$ be its universal cover, 
$\mathbf{D}_S$ a Borel fundamental domain in $\tilde{S}$ which can be chosen to be a domain with piecewise smooth boundary. Let $\mathrm{Punc}_S=\{p_1,...,p_n\}$ be the possibly empty set of punctures of $S$.

If $\mathbf{n}>0$, recall from Subsection \ref{Eren} that there are disjoint punctured disks 
$$D(p_1,1),...,D(p_\mathbf{n},1) \subset S$$ 
such that for each $q\in \{1,...,\mathbf{n}\}$, there is a conformal diffeomorphism
$$P_{p_q} :D(p_q,1)\to \mathbb{D}^*,$$
and that $D(p_q,r)$ is defined as the set of points in $D(p_q,1)$ sent by $P_{p_q}$ to points $r$-close to the origin in $\mathbb{D}^*$. For any $\delta\in  (0,1]$, set
\begin{equation}\label{adelta}
A_\delta:= S\setminus \bigcup_{q=1}^\mathbf{n} D(p_q,\delta).
\end{equation}
If $\mathbf{n}=0$, define by convention $A_\delta=S$.
Let $\tilde{A}_\delta$ be the lift of $A_\delta$ to $\tilde{S}$.

If $H$ is a Hilbert space, we will always denote by $g_H$ its Hilbert metric and $\mathbb{S}_H$ its unit sphere.
Given a unitary representation $\nu:{\pi_1(S)} \to\End(K)$, as before  we denote by $\bigoplus^\infty \nu :{\pi_1(S)}\to \End(\bigoplus^\infty K)$  the infinite direct sum
(\cite[Definition A.1.6]{BDLHV08}). For the definitions of weak containment and weak equivalence of representations, and the corresponding symbols $\prec$ and $\sim$, see Subsection \ref{weak equiv} in the Appendix.

\begin{prop} \label{proposition:approx}
Let $\delta\in (0,1]$. Consider two unitary representations $\rho:{\pi_1(S)} \to\End(H)$ and $\nu:{\pi_1(S)} \to\End(K)$. 
Suppose that $\rho \prec \nu$.  Let 
$$\varphi: \tilde{A}_\delta\to \mathbb{S}_H$$
be a smooth ${\pi_1(S)}$-equivariant map with respect to $\rho$.
Then for any $\epsilon'>0$, 
\begin{enumerate}
\item
there is a smooth ${\pi_1(S)}$-equivariant map with respect to $\bigoplus^\infty \nu$,
$$\varphi' : \tilde{A}_\delta \to \mathbb{S}_{\bigoplus^\infty K}$$
 such that
$$\|(\varphi')^*g_{\bigoplus^\infty K} - \varphi^*g_H\|_{C^3(\mathbf{D}_S \cap \tilde{A}_\delta)} < \epsilon'$$
and 
$$|\E(\varphi'\vert_{\mathbf{D}_S\cap \tilde{A}_\delta}) - \E(\varphi \vert_{\mathbf{D}_S\cap \tilde{A}_\delta}) |<\epsilon',$$
\item if moreover $\rho$ is irreducible, then for any direct sum  decomposition 
$$K=\bigoplus_{m=1}^\infty K_m   ,\quad \nu=\bigoplus_{m=1}^\infty \nu_m,\quad \nu_m:{\pi_1(S)}\to \End(K_m),$$
 the map  $\varphi'$ can be chosen to take values in $\mathbb{S}_{K_{m_0}}$ for some $m_0$ depending on $\rho, \epsilon'$.
\end{enumerate}
\end{prop}

\begin{proof}

 It is possible to find a finite or infinite sequence of unit vectors $\{e_1,e_2,e_3,...\}$ such that the family of vectors
$ \{\rho(g)e_l\}_{l\geq 1, g\in {\pi_1(S)} }$ generates $H$, and for any $l_1\neq l_2$, the subspaces generated by 
$\{\rho(g)e_{l_1}\}_{g\in {\pi_1(S)} }$ and $\{\rho(g)e_{l_2}\}_{g\in {\pi_1(S)} }$ are orthogonal. 

 We fix an ordering on ${\pi_1(S)}$. 
 For each $l\geq 1$, let $\{u_{l,j}\}_{j\geq 1}$ be the unique orthonormal Hilbert basis of the Hilbert subspace generated by $\{\rho(g)e_{l}\}_{g\in {\pi_1(S)} }$ given by the Gram-Schmidt algorithm applied to $\{\rho(g)e_{l}\}_{g\in {\pi_1(S)} }$ using  the ordering on ${\pi_1(S)}$.

We can choose the fundamental domain $\mathbf{D}_S$ so that the compact domain $\overline{\mathbf{D}}_S \cap \tilde{A}_\delta$ has a piecewise smooth boundary, where $\overline{\mathbf{D}}_S$ is the closure of $\mathbf{D}_S$. 
The restriction of $\varphi$ to $\mathbf{D}_S\cap \tilde{A}_\delta$ can be written under the form:
$$\varphi:\mathbf{D}_S\cap \tilde{A}_\delta\to \mathbb{S}_H$$
$$x\in \mathbf{D}_S\cap \tilde{A}_\delta \mapsto  \sum_{l\geq 1}\sum_{j\geq 1} h_{l,j}(x) u_{l,j}$$
for some uniquely determined  functions $h_{l,j}: \mathbf{D}_\Sigma \cap \tilde{A}_\delta\mapsto \mathbb{R}$ which vary smoothly with respect to $x$.

Let $\epsilon>0$. By a standard approximation argument, we can find functions 
$$h^\epsilon_g: \mathbf{D}_S \cap \tilde{A}_\delta\mapsto\mathbb{R}$$  such that the new map 
$$\varphi^\epsilon:\mathbf{D}_S\cap \tilde{A}_\delta \to \mathbb{S}_H$$
$$x\in \mathbf{D}_S\cap \tilde{A}_\delta   \mapsto \sum_{l\geq 1}\sum_{j\geq 1} h^\epsilon_{l,j}(x) u_{l,j}$$
satisfies two properties:
\begin{enumerate}
\item $h^\epsilon_{l,j}(x)=0$ for any $x\in \mathbf{D}_S\cap \tilde{A}_\delta$, except for finitely many  $l\geq 1$ and $j\geq 1 $,
\item  $\|(\varphi^\epsilon)^*g_H -\varphi^*g_H\|_{C^3(\mathbf{D}_S\cap \tilde{A}_\delta)} \leq \epsilon.$
\end{enumerate}
In particular, for any $\epsilon'>0$, if $\epsilon$ is small enough, then the maps above satisfy:
$$\|(\varphi^\epsilon)^*g_H-\varphi^*g_H\|_{C^3(\mathbf{D}_S\cap \tilde{A}_\delta)} \leq \epsilon'/3,$$
$$|\E(\varphi^\epsilon\vert_{\mathbf{D}_S\cap \tilde{A}_\delta}) - \E(\varphi\vert_{\mathbf{D}_S\cap \tilde{A}_\delta})| <\epsilon'/3.$$

Since $\{u_{l,j}\}_{j\geq 1}$ was obtained by applying the Gram-Schmidt procedure to $\{\rho(g)e_{l}\}_{g\in {\pi_1(S)} }$, the functions $h^\epsilon_{l,j\geq1}$ determine uniquely a family of functions 
$$\{k^\epsilon_{l,g} :  \mathbf{D}_S \cap \tilde{A}_\delta \mapsto\mathbb{R} \}_{l\geq 1, g\in{\pi_1(S)}}$$ such that for any $x\in \mathbf{D}_S\cap \tilde{A}_\delta$,
$$\varphi^\epsilon(x)= \sum_{l\geq 1}\sum_{g\in {\pi_1(S)}} k^\epsilon_{l,g}(x) \rho(g)e_l$$
and for all but finitely many $l\geq 1$ and $g\in {\pi_1(S)}$, we have $k^\epsilon_{l,g}(x)=0$ for any $x$.

By our assumption that $\rho\prec \nu$, for any $l\geq 0$,  for all $\epsilon_1$ and every finite subset $Q\subset {\pi_1(S)} $, there exist $\eta_{l,1},...,\eta_{l,m}$ in $K$ such that for all $g\in Q$,
 $$\big| \langle \rho(g)e_l,e_l \rangle - \sum_{j=1}^m \langle \nu(g)\eta_{l,j},\eta_{l,j}\rangle \big| <\epsilon_1.$$
 We can reformulate that by saying the following: 
for any $l\geq 0$,  for all $\epsilon_1$ and every finite subset $Q\subset {\pi_1(S)} $, there exist $\eta_{l}$ in $\bigoplus^\infty K$ such that for all $g\in Q$,
 \begin{equation}\label{approx lambda}
 \big| \langle \rho(g)e_l,e_l \rangle - \langle \bigoplus^\infty \nu(g)\eta_l,\eta_l \rangle \big| <\epsilon_1.
 \end{equation}
Moreover, if $l_1\neq l_2$, then the subspace generated by $\eta_{l_1}$ is orthogonal to the one generated by  $\eta_{l_2}$ inside $\bigoplus^\infty K$. Inspecting the inequality for $g=e$ the unit element, we can assume that each $\eta_l$ has norm one.

We choose the finite subset $Q\subset {\pi_1(S)}$ so that it contains all elements of the form $g_1^{-1}g_2$ where $g_1,g_2\in {\pi_1(S)}$ are such that  $k^{\epsilon}_{l,g_1}(x_1) \neq 0, k^{\epsilon}_{l,g_2}(x_2) \neq 0$ for some $l\geq 1$ and some $x_1,x_2\in \mathbf{D}_S \cap \tilde{A}_\delta $.
Given $\epsilon_1>0$, let $\eta_l\in \bigoplus^\infty K$ be as above. 
Define the approximation map
$$\mathscr{A}^\epsilon:\mathbf{D}_S \cap \tilde{A}_\delta \to \mathbb{S}_{\bigoplus^\infty K}$$
$$x\in \mathbf{D}_S\cap \tilde{A}_\delta \mapsto \frac{1}{\|\sum_{l\geq 1} \sum_{g\in {\pi_1(S)} }k^\epsilon_{l,g}(x) \bigoplus^\infty\nu(g)\eta_l\|}\sum_{l\geq 1} \sum_{g\in {\pi_1(S)} }k^\epsilon_{l,g}(x) \bigoplus^\infty\nu(g)\eta_l.$$
By (\ref{approx lambda}) and using that the unitary representations $\rho$ and $\bigoplus^\infty \nu$ preserve scalar products, this map is well-defined for $\epsilon_1$ small enough since then, the vector on the right side above is nowhere vanishing.
By (\ref{approx lambda}) again, for any $\epsilon'$,  if $\epsilon_1$ is chosen small enough, then $\mathscr{A}^\epsilon$ and $\varphi^\epsilon$ satisfy:
$$\|(\mathscr{A}^\epsilon)^*g_{\bigoplus^\infty K} - (\varphi^\epsilon)^*g_H\|_{C^3(\mathbf{D}_S\cap \tilde{A}_\delta)} \leq \epsilon'/3,$$
$$|\E(\mathscr{A}^\epsilon\vert_{\mathbf{D}_S\cap \tilde{A}_\delta}) - \E(\varphi^\epsilon\vert_{\mathbf{D}_S\cap \tilde{A}_\delta})| <\epsilon'/3.$$

It remains to construct from $\mathscr{A}^\epsilon$ a nearby map that is the restriction to $\mathbf{D}_S\cap \tilde{A}_\delta$ of a ${\pi_1(S)} $-equivariant map from $\tilde{A}_\delta $ to $\mathbb{S}_{\bigoplus^\infty K}$.
Let us call $s_1,...,s_j,...$ the finitely many sides of the polygon $\overline{\mathbf{D}}_S \cap \tilde{A}_\delta$.
For any side $s_j$ of $\overline{\mathbf{D}}_S\cap \tilde{A}_\delta$, 
if $g.s_j$ is another side $s_k$ of $\overline{\mathbf{D}}_S\cap \tilde{A}_\delta$ for some $g\in {\pi_1(S)} $, then by ${\pi_1(S)}$-equivariance of $\varphi$, the two maps 
$\varphi \circ g: s_j \to \mathbb{S}_H$ and $\rho(g) \circ  \varphi : s_j \to \mathbb{S}_H$ are equal.
It is then clear that  if $\epsilon$, $\epsilon_1$ are small enough, 
$\mathscr{A}^\epsilon \circ g: s_j \to \mathbb{S}_{\bigoplus^\infty K}$ and $\bigoplus^\infty\nu(g) \circ  \mathscr{A}^\epsilon: s_j \to \mathbb{S}_{\bigoplus^\infty K}$ are arbitrarily close in the $C^\infty$-topology. One can make them equal by a small perturbation of $\mathscr{A}^\epsilon$.
Repeating this for every side $s_j$, it is then quite elementary  to perturb the map $\mathscr{A}^\epsilon$ to get a new map 
$$\mathscr{A}^\epsilon_0: \overline{\mathbf{D}}_S \cap \tilde{A}_\delta \to \mathbb{S}_{\bigoplus^\infty K}$$
such that $\mathscr{A}^\epsilon_0$ is the restriction of a smooth ${\pi_1(S)} $-equivariant  map 
$$\varphi' :\tilde{A}_\delta \to \mathbb{S}_{\bigoplus^\infty K},$$ and 
$$\|(\varphi')^*g_{\bigoplus^\infty K} - (\mathscr{A}^\epsilon)^*g_{\bigoplus^\infty K}\|_{C^3(\mathbf{D}_S\cap \tilde{A}_\delta)} \leq \epsilon'/3,$$
$$|\E(\varphi' \vert_{\mathbf{D}_S\cap \tilde{A}_\delta}) - \E(\mathscr{A}^\epsilon\vert_{\mathbf{D}_S\cap \tilde{A}_\delta})| <\epsilon'/3.$$
Combining the previous inequalities, we get
$$\|\varphi'- \varphi\|_{C^3(\mathbf{D}_S\cap \tilde{A}_\delta)} \leq \epsilon',$$
$$|\E(\varphi'\vert_{\mathbf{D}_S\cap \tilde{A}_\delta}) - \E(\varphi\vert_{\mathbf{D}_S\cap \tilde{A}_\delta})| <\epsilon'.$$ 
Property (1) of the proposition is proved.

To show (2), note that if $\rho$ is irreducible, then $H$ is generated by $\{\rho(g) e_1\}_{g\in {\pi_1(S)}}$ (i.e. there is only one nonzero vector $e_l$).
Moreover, since $\rho\prec \nu$, if $K=\bigoplus_{m=1}^\infty K_m  $, $\nu=\bigoplus_{m=1}^\infty \nu_m$, then (\ref{approx lambda}) holds with  $\eta_{1}$ belonging to  some $K_m$ instead of $\bigoplus^\infty K$.
Indeed, this follows from the proof of \cite[Proposition F.1.4]{BDLHV08} (in its proof, replace $\mathcal{F}$ by the set of normalized functions of positive type on ${\pi_1(S)}$ of the form $\langle \nu(.)\xi, \xi\rangle$ where $\xi \in \bigcup_{m=1}^\infty K_m$). 
Thus, the  approximations $\mathscr{A}^\epsilon$ and $\mathscr{A}^\epsilon_0$  can be chosen to take values in $\mathbb{S}_{K_m}$ instead of $\mathbb{S}_{\bigoplus^\infty K}$, and so $\varphi'$ can be chosen to take values in $\mathbb{S}_{K_m}$ instead of $\mathbb{S}_{\bigoplus^\infty K}$.

\end{proof}

\subsection{Extending equivariant maps}
Below, we explain how to equivariantly extend a map from a compact subset of a universal cover (like those produced in the previous subsection) to the whole universal cover, in such a way that the renormalized  energy on a fundamental domain remains controlled\footnote{This subsection is technical and is only needed when working with punctured surfaces. In particular, if the reader is only interested in closed surfaces,  Subsection \ref{approx subsection} is enough to obtain the main consequences  of this subsection, Corollaries \ref{coro:basic} and \ref{coree}.}. A necessary and sufficient condition for a pair $(S,\rho)$ to have finite (non-renormalized) energy is also given in this subsection. 

Let $S$ be a punctured Riemann, with a fundamental domain $\mathbf{D}_S \subset \tilde{S}$ as before,
and let $\mathrm{Punc}_S$ be the set of punctures $p_1,...,p_\mathbf{n}$ of $S$. 
Let $\rho:{\pi_1(S)}\to \End(V)$ be a unitary representation (possibly of infinite dimension), $\mathbb{S}_V$ the unit sphere of the Hilbert space $V$. 
Let $\mathscr{H}_{S,\rho}$ be the space of smooth equivariant maps defined in (\ref{def H}). Given $p\in  \mathrm{Punc}_S$, the quantity $\blambda_{\rho}(p)$ was defined in (\ref{blambda}).

\textbf{Preparation and some notations:}
Let $\delta\in (0,1]$.
If $\mathbf{n}>0$, let  $A_\delta\subset S$ and $\tilde{A}_\delta\subset \tilde{S}$ be as in (\ref{adelta}). By convention, if $\mathbf{n}=0$,  $A_\delta=S$.
Let ${\gamma}_1,...,{\gamma}_\mathbf{n}$ be elements of the fundamental group which can be represented by oriented embedded loops around the respective punctures $p_1,...,p_\mathbf{n}$. Those elements are uniquely determined up to conjugacy. Those loops are freely homotopic to the boundary components of $A_\delta$. 

Let 
$$\Pi:{\tilde{S}}\to S$$
be the natural projection.  
Consider the complement $\tilde{A}_\delta^c$ of $\tilde{A}_\delta$ inside $\tilde{S}$. 
The region $\tilde{A}_\delta^c$ is a disjoint union of connected components, each of which is mapped to an embedded punctured disk inside $S$ under the projection $\Pi$.
For each $q\in \{1,...,\mathbf{n}\}$, recall that we fixed a conformal parameterization
$$P_{p_q}:D(p_q,1)\to \mathbb{D}^*$$ 
and defined $D(p_q,r)$ in (\ref{dpr}). 
Let us also fix a standard conformal diffeomorphism from $\mathbb{D}^*$ to the half-infinite cylinder
\begin{align*}
\mathcal{Q}:\quad \mathbb{D}^* &\to \{z\in\mathbb{C};\quad \im(z)\geq 0\}/z\mapsto z+2\pi\\
(\theta,r) & \mapsto (\theta,\log \frac{1}{r}).
\end{align*}

For each $q\in \{1,...,\mathbf{n}\}$, consider a connected  component $C_{\delta,q}$ of $\tilde{A}_\delta^c$ left invariant by the cyclic subgroup $\langle \gamma_q \rangle$ (this component is uniquely defined). 
By lifting the conformal parameterization $\mathcal{Q}\circ P_{p_q}$ to $C_{\delta,q}$, the closure of the connected component $C_{\delta,q}$ is conformal to 
$$\{z\in\mathbb{C};\quad \im(z)\geq \log\frac{1}{\delta}\}.$$
 In this conformal parametrization, $\gamma_q$ acts by translation $z\mapsto z+2\pi.$ Note that $$\tilde{A}_\delta^c = \bigcup_{q=1}^\mathbf{n} \Pi^{-1}(\Pi(C_{\delta,q})).$$

\begin{lem} \label{lem:extension}
Let $\delta\in (0,1]$. Consider a  smooth ${\pi_1(S)}$-equivariant map $\varphi_0:\tilde{A}_\delta\to \mathbb{S}_V$ with respect to $\rho$, such that for some $\bar{L}>0$, for each $q\in \{1,...,\mathbf{n}\}$,  $$\int_{\mathbf{D}_S\cap \partial C_{\delta,q}} |d\varphi_0|^2 \leq \bar{L}.$$ 
Then there is a constant $C=C(\bar{L})>0$ such that the following holds:
\begin{enumerate}
\item if $\blambda_{\rho}(p)=0$ for all $p\in  \mathrm{Punc}_S$, then there is a map $\hat{\varphi}\in \mathscr{H}_{S,\rho}$ such that
$$\E(\hat{\varphi}\vert_{\mathbf{D}_S}) \leq \E(\varphi_0\vert_{\mathbf{D}_S\cap \tilde{A}_\delta}) + C \sum_{q=1}^\mathbf{n}\sqrt{\int_{\mathbf{D}_S\cap \partial C_{\delta,q}} |d\varphi_0|^2 },$$
\item if $\dim_\mathbb{R} V<\infty$, then there is a map $\hat{\varphi}\in \mathscr{H}_{S,\rho}$ such that
$$\Eren(\hat{\varphi}\vert_{\mathbf{D}_S}) \leq \E(\varphi_0\vert_{\mathbf{D}_S\cap \tilde{A}_\delta}) + C \sum_{q=1}^\mathbf{n}\sqrt{\int_{\mathbf{D}_S\cap \partial C_{\delta,q}} |d\varphi_0|^2 }.$$
\end{enumerate}
\end{lem}

\begin{proof}

The lemma is only nontrivial when there are some punctures, so we will assume in this proof that  $\mathbf{n}>0$.
Let $\delta\in (0,1]$. Consider the given  smooth ${\pi_1(S)}$-equivariant map $\varphi_0:\tilde{A}_\delta\to \mathbb{S}_V.$
 Since $\mathbf{D}_S\cap \tilde{A}_\delta$ is compact, 
$\E((\varphi_0)\vert_{\mathbf{D}_S\cap \tilde{A}_\delta})<\infty.$
We want to extend this map to a map from the whole $\tilde{S}$ with (renormalized) energy controlled in terms of $\E(\varphi_0\vert_{\mathbf{D}_S\cap \tilde{A}_\delta})$ and the boundary integral $\int_{\mathbf{D}_S\cap \partial \tilde{A}_\delta} |d\varphi_0|^2$.

 \textbf{Case $(1)$:}
The assumption that $\blambda_{\rho}(p)=0$ for all $p\in  \mathrm{Punc}_S$ means the following:
 for each $j\geq 1$, there is a vector $v_{q,j}\in \mathbb{S}_V$ and some length-minimizing geodesic segment $\beta_j$ from $v_{q,j}$ to $\rho(\gamma_q)v_{q,j}$ with length converging to $0$ as $j\to \infty$. Consider  a small constant 
 $$\alpha\in (0,10^{-10})$$ which will be fixed later. After taking a subsequence in $j$, we can assume without loss of generality that
\begin{equation}\label{length bound}
\Length(\beta_j) \leq \alpha e^{-j}.
\end{equation}
Furthermore, by changing $v_{q,j}$ to its opposite $-v_{q,j}$, we can assume without loss of generality that for any $j$
\begin{equation}\label{jj}
\dist_{\mathbb{S}_V}(v_{q,j},v_{q,j+1}) \leq \frac{\pi}{2}.
\end{equation}
We can also assume (after taking the opposite of the $v_{q,j}$'s if necessary) that 
\begin{equation}\label{11}
\dist_{\mathbb{S}_V}(\varphi_0((0,\log\frac{1}{\delta})),v_{q,1}) \leq \frac{\pi}{2}.
\end{equation}


Fix momentarily $q\in  \{1,...,\mathbf{n}\}$ and consider a corresponding component $C_{\delta,q}$ of $\tilde{A}_\delta^c$ as before the lemma.
By a slight abuse of notations, we identify any map $\phi:C_{\delta,q}\to \mathbb{S}_V$ (resp. $\phi:\partial C_{\delta,q}\to \mathbb{S}_V$) with the corresponding map
$$\{z\in\mathbb{C};\quad \im(z)\geq \log\frac{1}{\delta}\} \to \mathbb{S}_V \quad (\text{resp.}\quad \partial \{z\in\mathbb{C};\quad \im(z)\geq \log\frac{1}{\delta}\} \to \mathbb{S}_V)$$
induced by the conformal parametrization fixed above.
We will first define a new map $\varphi_1$  on $\{z\in\mathbb{C};\quad \im(z)\geq \log\frac{1}{\delta}\} $, which we require to be equivariant, to coincide with $\varphi_0$ on the boundary $\partial  \{z\in\mathbb{C};\quad \im(z)\geq \log\frac{1}{\delta}\} $, and such that the area of the image of $[0,2\pi]\times[\log\frac{1}{\delta},\infty)$ has controlled area. Then, we will use a reparameterization trick  to obtain a map $\varphi_2$ with controlled energy on $[0,2\pi]\times[\log\frac{1}{\delta},\infty)$. 

We start by setting $\varphi_1=\varphi_0$ on the boundary $\partial \{z\in\mathbb{C};\quad \im(z)\geq \log\frac{1}{\delta}\} = \mathbb{R}\times \{\log\frac{1}{\delta}\}$. By an arbitrarily small perturbation of $\varphi_0$, we can assume (without loss of generality for the proof) that the restriction of $\varphi_1$ to $\partial \{z\in\mathbb{C};\quad \im(z)\geq 1\} = \mathbb{R}\times \{1\}$ is an immersion.
On the square $[0,2\pi]\times[\log\frac{1}{\delta},\log\frac{1}{\delta}+1] \subset \{z\in\mathbb{C};\quad \im(z)\geq \log\frac{1}{\delta}\}$, we extend $\varphi_1$  in such a way that 
\begin{itemize}
\item $\varphi_1(.,\log\frac{1}{\delta}+1)$ is a parameterization proportional to arclength of the curve $\varphi_1(.,\log\frac{1}{\delta})$, in other words
$$\forall t\in [0,2\pi],\quad |\frac{\partial}{\partial t}\varphi_1(t,\log\frac{1}{\delta}+1)| = \frac{1}{2\pi} \int_{0}^{2\pi}|\frac{\partial}{\partial t'}\varphi_1(t',\log\frac{1}{\delta})| dt',$$ 
\item when $s\in [\log\frac{1}{\delta},\log\frac{1}{\delta}+1]$, 
$\varphi_1(.,s)$
 is the natural linear interpolation between the two parameterizations $\varphi_1(.,\log\frac{1}{\delta})$ and $\varphi_1(.,\log\frac{1}{\delta}+1)$, namely if we set $\gamma(.):=\varphi_1(.,\log\frac{1}{\delta}+1)$ and view it as a bijection onto its image, 
 $$\forall (t,s)\in[0,2\pi]\times [\log\frac{1}{\delta},\log\frac{1}{\delta}+1], \quad \varphi_1(t,s) = \gamma\big[(s-\log\frac{1}{\delta})t+ (1-s+\log\frac{1}{\delta}) \gamma^{-1}(\varphi_1(t,\log\frac{1}{\delta}))\big].$$
 \end{itemize}
By those two properties, plus the assumption in the lemma, we estimate easily that  
\begin{equation}\label{9o0}
\int_{[0,2\pi]\times[\log\frac{1}{\delta},\log\frac{1}{\delta}+1]} |d\varphi_1|^2 \leq C_{(0)} \int_{\mathbf{D}_S\cap \partial C_{\delta,q}} |d\varphi_0|^2 \leq C_{(0)}\sqrt{\bar{L}}\sqrt{\int_{\mathbf{D}_S\cap \partial C_{\delta,q}} |d\varphi_0|^2}
\end{equation}
for some constant $C_{(0)}$ depending only on $\bar{L}$. Here the energy is computed using the standard Euclidean metric $g_{\mathrm{Eucl}}$ on $[0,2\pi]\times[\log\frac{1}{\delta},\log\frac{1}{\delta}+1]$.

Next set 
$$\varphi_1:[0,2\pi] \times \{\log\frac{1}{\delta}+2\} \to \mathbb{S}_V $$
to be the geodesic $\beta_2$ from $\varphi_1(0,\log\frac{1}{\delta}+2):=v_{q,2}$ to $\varphi_1(2\pi,\log\frac{1}{\delta}+2):=\rho(\gamma_q)v_{q,2}$, parametrized proportionally to arclength. 
From the previous paragraph, $\gamma(.):=\varphi_1(.,\log\frac{1}{\delta}+1):[0,2\pi]\to \mathbb{S}_V$ is a parameterization proportional to arclength of the curve  $\varphi_1(.,\log\frac{1}{\delta})$, and 
\begin{equation}\label{addo}
\Length(\gamma) = \int_{0}^{2\pi}|\frac{\partial}{\partial t'}\varphi_1(t',\log\frac{1}{\delta})| dt' \leq \sqrt{2\pi} \sqrt{\int_{\mathbf{D}_S\cap \partial C_{\delta,q}} |d\varphi_0|^2} \leq  \sqrt{2\pi}\sqrt{\bar{L}}.
\end{equation}
 We claim that for some constant $C_{(1)}$ depending only on $\bar{L}$, we can extend $\varphi_1$ to the region $[0,2\pi]\times [\log\frac{1}{\delta}+1,\log\frac{1}{\delta}+2]$ with a map whose differential satisfies at all $(x,y)\in [0,2\pi]\times [\log\frac{1}{\delta}+1,\log\frac{1}{\delta}+2]$:
\begin{equation}\label{1q2}
\begin{split}
|\frac{\partial \varphi_1}{\partial x}| & \leq C_{(1)}\big[(1-(y-\log\frac{1}{\delta}-1))\sqrt{\int_{\mathbf{D}_S\cap \partial C_{\delta,q}} |d\varphi_0|^2}+(y-\log\frac{1}{\delta}-1)\Length(\beta_2)\big] \\
|\frac{\partial \varphi_1}{\partial y}| & \leq C_{(1)}.
\end{split}
\end{equation}
If the length of $\gamma$ is smaller than say ${10^{-10}}$, such an extension can be cosntructed for instance by interpolating linearly between $\gamma$ and $\beta_2$ inside the Hilbert space $V$, using (\ref{11}) and radially projecting the interpolation to the sphere $\mathbb{S}_V$. If the length of $\gamma$ is between ${10^{-10}}$ and $\sqrt{2\pi}\sqrt{\bar{L}}$ (which is always an upper bound by (\ref{addo})), such an extension exists by an argument by contradiction and a compactness argument. This justifies the claim.


For each $j\geq 3$, set 
$$\varphi_1:[0,2\pi] \times \{\log\frac{1}{\delta}+j\} \to \mathbb{S}_V $$
to be the geodesic $\beta_j$ from $\varphi_1(0,\log\frac{1}{\delta}+j):=v_{q,j}$ to $\varphi_1(2\pi,\log\frac{1}{\delta}+j):=\rho(\gamma_q)v_{q,j}$, parametrized proportionally to arclength.
For any $j\geq 2$ and $t\in [0,2\pi]$, define
$$\varphi_1:\{t\}\times[\log\frac{1}{\delta}+{j},\log\frac{1}{\delta}+j+1] \to \mathbb{S}_V$$
to be a geodesic segment of length less than $\pi$ from $\varphi_1((t,\log\frac{1}{\delta}+j))$ to $\varphi_1((t,\log\frac{1}{\delta}+j+1))$ parametrized proportionally to arclength. By (\ref{length bound}) and (\ref{jj}),  $\varphi_1$ is well-defined and smooth on $[0,2\pi]\times [\log\frac{1}{\delta}+j,\log\frac{1}{\delta}+j+1]$. Moreover, it is elementary to check that for some constant $C_{(2)}$ depending only on $\bar{L}$,
at any point $(x,y)\in [0,2\pi]\times [\log\frac{1}{\delta}+j,\log\frac{1}{\delta}+j+1]$:
\begin{equation}\label{1q3}
\begin{split}
|\frac{\partial \varphi_1}{\partial x}| & \leq C_{(2)}\big[(1-(y-\log\frac{1}{\delta}-j))\Length(\beta_j)+(y-\log\frac{1}{\delta}-j)\Length(\beta_j+1)\big] \\
|\frac{\partial \varphi_1}{\partial y}| & \leq C_{(2)}.
\end{split}
\end{equation}

Now, the map $\varphi_1$ is extended to the whole band $[0,2\pi]\times [\log \frac{1}{\delta},\infty)$,  but it may not have controlled energy. On the other hand, it has bounded differential with respect to a metric which we construct now.
We denote by $g_{\mathrm{Eucl}}$ the standard Euclidean metric on $[0,2\pi]\times[\log\frac{1}{\delta},\infty)$.
Let $g_1$ be the piecewise smooth Riemannian metric on $[0,2\pi]\times[\log\frac{1}{\delta},\infty)$ defined as follows: 
\begin{itemize}
\item on $[0,2\pi]\times[\log\frac{1}{\delta},\log\frac{1}{\delta}+1]$,
$$g_1=\big(\int_{\mathbf{D}_S\cap \partial C_{\delta,q}} |d\varphi_0|^2\big)g_{\mathrm{Eucl}},$$
\item on $[0,2\pi]\times[\log\frac{1}{\delta}+1,\log\frac{1}{\delta}+2]$,
$$g_1= \big[(1-(y-\log\frac{1}{\delta}-1))\sqrt{\int_{\mathbf{D}_S\cap \partial C_{\delta,q}} |d\varphi_0|^2}+(y-\log\frac{1}{\delta}-1)\Length(\beta_2)\big]^2dx^2 + dy^2,$$ 
\item for each $j\geq 2$, on $[0,2\pi]\times[\log\frac{1}{\delta}+j,\log\frac{1}{\delta}+j+1]$,
$$g_1= \big[(1-(y-\log\frac{1}{\delta}-j))\Length(\beta_j)+(y-\log\frac{1}{\delta}-j)\Length(\beta_{j+1})\big]^2dx^2 + dy^2.$$
\end{itemize}
Note that this metric  induces  a metric on the cylinder $\{z\in\mathbb{C};\quad \im(z)\geq \log\frac{1}{\delta}\}/{z\mapsto z+2\pi}$ which is rotationally symmetric. 
Here is the key property of this metric:  thanks to  (\ref{1q2}) and (\ref{1q3}),  we have with respect to $g_1$:
$$|d \varphi_1|_{g_1}\leq C_{(3)}$$ on $[0,2\pi]\times [\log\frac{1}{\delta}+1,\infty)$ for a  constant $C_{(3)}$ depending only on $\bar{L}$. 
In particular, 
\begin{align*}
\int_{[0,2\pi]\times[\log\frac{1}{\delta},\infty)} |d\varphi_1|_{g_1}^2 dv_{g_1} \leq &  C_{(0)} \sqrt{\bar{L}}\sqrt{\int_{\mathbf{D}_S\cap \partial C_{\delta,q}} |d\varphi_0|^2} + C_{(3)}^2 \Area([0,2\pi]\times[\log\frac{1}{\delta}+1,\infty),g_1)\\
\end{align*}
where the first term on the right is due to (\ref{9o0}) and conformal invariance of the energy. 
Because of the lengths bound for $\beta_j$ (\ref{length bound}) and using the definition of $g_1$, we have  for some constant $C_{(4)}$ depending only on $\bar{L}$,
$$\Area([0,2\pi]\times[\log\frac{1}{\delta}+1,\infty),g_1)\leq C_{(4)} (\sqrt{\int_{\mathbf{D}_S\cap \partial  C_{\delta,q}} |d\varphi_0|^2} +\alpha)$$
which implies with the previous inequality that
\begin{equation}\label{5t6}
\int_{[0,2\pi]\times[\log\frac{1}{\delta},\infty)} |d\varphi_1|_{g_1}^2 dv_{g_1} \leq   \big(C_{(0)} \sqrt{\bar{L}}+C_{(3)}^2 C_{(4)}\big)(\sqrt{\int_{\mathbf{D}_S\cap \partial  C_{\delta,q}} |d\varphi_0|^2} +\alpha).
\end{equation}

In order to get a map with controlled energy with respect to $g_{\mathrm{Eucl}}$ instead of $g_1$, we can use a reparameterization trick. Since $g_1$ is rotationally symmetric on the cylinder $\{z\in\mathbb{C};\quad \im(z)\geq \log\frac{1}{\delta}\}/{z\mapsto z+2\pi}$, this Riemannian cylinder is conformal to a flat cylinder $\mathcal{C}$ which is either bounded or half-infinite,
via a conformal map which is itself equivariant by rotations of the cylinder. Since the metric $g_1$ is of the form $f(y)^2dx^2+dy^2$ on $[0,2\pi]\times[\log\frac{1}{\delta},\infty)$ for all $y\in [\log\frac{1}{\delta},\infty)$ large enough, where $|f(y)|\leq 1$ by the length bound (\ref{length bound}) for the $\beta_j$'s, the flat cylinder $\mathcal{C}$ is necessarily half-infinite and we can take it to be $\{z\in\mathbb{C};\quad \im(z)\geq \log\frac{1}{\delta}\}/{z\mapsto z+2\pi}$ with the standard metric $g_{\mathrm{Eucl}}$.
After lifting to the universal covers, we obtain an equivariant, conformal diffeomorphism
$$\mathcal{R} : (\mathbb{R}\times[\log\frac{1}{\delta},\infty),g_{\mathrm{Eucl}})\to (\mathbb{R}\times[\log\frac{1}{\delta},\infty),g_1)$$
which restricts to the identity map  on the boundary $\mathbb{R} \times \{\log\frac{1}{\delta}\}$.

Consider the new map 
$$\varphi_2:=\varphi_1\circ \mathcal{R}:[0,2\pi]\times[\log\frac{1}{\delta},\infty) \to \mathbb{S}_V.$$
By (\ref{5t6}) and conformal invariance of the energy, this map satisfies for some constant $C_{(5)} $ depending only on $\bar{L}$:
\begin{equation}\label{energy estim 1}
\int_{[0,2\pi]\times[\log\frac{1}{\delta},\infty)} |d\varphi_2|^2 \leq C_{(5)} (\sqrt{\int_{\mathbf{D}_S\cap \partial C_{\delta,q}} |d\varphi_0|^2} +\alpha)
\end{equation}
where the left-hand side is computed with respect to the Euclidean metric $g_{\mathrm{Eucl}}$.

The map $\varphi_2$ defines, after imposing equivariance, a map from $\Pi^{-1}(\Pi(C_{\delta,q}))$ to $\mathbb{S}_V$. 
Repeating the whole construction for each $q\in \{1,...,\mathbf{n}\}$, we obtain an  equivariant map 
$$\varphi_2: \tilde{A}^c_\delta \to \mathbb{S}_V$$
such that
\begin{equation}\label{energy estim 2}
\begin{split}
\int_{\mathbf{D}_S\cap \tilde{A}^c_\delta} |d\varphi_2|^2 & 
\leq C_{(5)} \sum_{q= 1}^{\mathbf{n}}\big(\sqrt{\int_{\mathbf{D}_S\cap \partial C_{\delta,q}} |d\varphi_0|^2 }+\alpha\big)\\
& \leq C_{(5)} \sum_{q= 1}^{\mathbf{n}}\sqrt{\int_{\mathbf{D}_S\cap \partial C_{\delta,q}} |d\varphi_0|^2 } +\mathbf{n}\alpha
\end{split}
\end{equation}
where the energy is computed with respect to the Euclidean metric $g_{\mathrm{Eucl}}$.
We can glue the maps $\varphi_0$ and $\varphi_2$ along the boundary of $\tilde{A}_\delta$ to obtain our final equivariant map 
$\hat{\varphi}:\tilde{S} \to \mathbb{S}_V.$
It belongs to $\mathscr{H}_{S,\rho}$ after a smoothing, and its energy satisfies by (\ref{energy estim 2}):
$$
\E(\hat{\varphi}\vert_{\mathbf{D}_S}) \leq \E(\varphi_0\vert_{\mathbf{D}_S\cap \tilde{A}_\delta}) +  C_{(5)} \sum_{q= 1}^{\mathbf{n}}\sqrt{\int_{\mathbf{D}_S\cap \partial C_{\delta,q}} |d\varphi_0|^2 } +\mathbf{n}\alpha$$
for a constant $C_{(5)}$ depending only on $\bar{L}$.  The positive constant $\alpha>0$ could have been taken arbitrarily small. Either $ \sum_{q= 1}^{\mathbf{n}}\sqrt{\int_{\mathbf{D}_S\cap \partial C_{\delta,q}} |d\varphi_0|^2 }>0$ and we can chose $\mathbf{n}\alpha$ to be smaller. Or $ \sum_{q= 1}^{\mathbf{n}}\sqrt{\int_{\mathbf{D}_S\cap \partial C_{\delta,q}} |d\varphi_0|^2 }=0$, which means that $\varphi_0$ is constant on connected components of $\partial \tilde{A}_\delta$, so we could have taken the trivial extension instead of $\hat{\varphi}$ above. The trivial extension has same energy as $\varphi_0$.
In both cases, we conclude Case (1).
\vspace{1em}

 \textbf{Case $(2)$:}
 Let $\delta\in (0,1]$. We use the same notations as above. We also introduce the following notation. 
Given  $q\in\{1,...,\mathbf{n}\}$, we have identified  $C_{\delta,q}$ with $\{z\in\mathbb{C};\quad \im(z)\geq \log\frac{1}{\delta}\}.$ 
Set $$\mathbf{r}:\{z\in\mathbb{C};\quad \im(z)\geq \log\frac{1}{\delta}\}\to [0,\infty)$$
$$\mathbf{r}: z\mapsto \exp(-\im(z)).$$
In particular, 
$\mathbf{r}(z) =\delta$ for $z \in \partial \{z\in\mathbb{C};\quad \im(z)\geq \log\frac{1}{\delta}\}$.

Fix $q\in\{1,...,\mathbf{n}\}$.
By the assumption that $\dim_\mathbb{R}V<\infty$ and by compactness, there is a point $v_{q}\in \mathbb{S}_V$ and some length-minimizing geodesic segment parameterized by arclength
 $$\beta:[0,2\pi]\to \mathbb{S}_V$$
 such that  $\beta(0)=v_{q}$ and $\beta(1) = \rho(\gamma_q)v_{q}$, and 
 $$\Length(\beta)=\blambda_{\rho}(q).$$
By changing $v_{q}$ to its opposite $-v_{q}$, we can assume without loss of generality that 
\begin{equation}\label{11bis}
\dist_{\mathbb{S}_V}(\varphi_0((0,\log\frac{1}{\delta})),v_{q}) \leq \frac{\pi}{2}.
\end{equation}

Next, we want to define an extension of $\varphi_0$ with controlled energy. We first extend $\varphi_0$ to a map $\varphi_0'$ on $[0,2\pi]\times[\log\frac{1}{\delta},\log\frac{1}{\delta}+2]$ such that 
$$\varphi_0'(.,\log\frac{1}{\delta}+2)=\beta.$$ 
Similar to the proof of Case (1), we choose $\varphi_0'$ to have controlled energy and area on $[0,2\pi]\times[\log\frac{1}{\delta},\log\frac{1}{\delta}+2]$, and the reparameterization trick enables us to construct another extension $\varphi_1:[0,2\pi]\times[\log\frac{1}{\delta},h]\to \mathbb{S}_V$ for some $h>\log\frac{1}{\delta}$, 
such that $\varphi_1(.,h) = \varphi_0'(.,\log\frac{1}{\delta}+2)$ and for a constant $C_{(6)}$ depending only on $\bar{L}$: 
\begin{equation}\label{edc}
\begin{split}
\int_{[0,2\pi]\times[\log\frac{1}{\delta},h]} |d\varphi_1|^2 
&\leq C_{(6)} \big(\sqrt{\int_{\mathbf{D}_S\cap \partial C_{\delta,q}} |d\varphi_0|^2} +\blambda_{\rho}(q)\big)\\
&\leq C_{(6)} \big(1+\sqrt{2\pi})\sqrt{\int_{\mathbf{D}_S\cap \partial C_{\delta,q}} |d\varphi_0|^2}
\end{split}
\end{equation}
where we used that $\blambda_{\rho}(q) = \Length(\beta)\leq \sqrt{2\pi} \sqrt{\int_{\mathbf{D}_S\cap \partial C_{\delta,q}} |d\varphi_0|^2}$ by Cauchy-Schwarz. 
Then we extend $\varphi_1$  to $[0,2\pi]\times [\log\frac{1}{\delta},\infty)$ by setting for all $(t,s)\in [0,2\pi]\times [h,\infty)$,
$$\varphi_1(t,s) = \beta(t).$$
The ``renormalized'' energy of $\varphi_1$ on $[0,1]\times [h,\infty)$ satisfies 
$$\lim_{r\to 0} \big[\big( \frac{1}{2}\int_{[0,2\pi]\times [h,\infty) \setminus \mathbf{r}^{-1}((0,r))} |d\varphi_1|^2 \big)-\frac{\blambda_{\rho}(p_q)^2}{4\pi}\log\frac{1}{r} \big]= -\frac{\blambda_{\rho}(p_q)^2}{4\pi} h\leq 0.$$
The ``renormalized'' energy of $\varphi_1$ on $[0,2\pi]\times [\log\frac{1}{\delta},\infty)$  is thus at most $\int_{[0,2\pi]\times[\log\frac{1}{\delta},h]} |d\varphi_1|^2 $, which is controlled by (\ref{edc}). 
Repeating this construction for every cusp, defining $\varphi_1$ on the whole universal cover $\tilde{S}$ by equivariance,  and adding the previous estimates together, we conclude Case (2).

\end{proof}

The following corollary contains a practical characterization of pairs $(S,\rho)$ with finite energy, and gives an a priori bound on $\Eren(S,\rho)$. 
\begin{cor}\label{coro:basic}
\begin{enumerate}
\item $\E(S,\rho)<\infty$
if and only if $\blambda_{\rho}(p)=0$ for all $p\in  \mathrm{Punc}_S$. 
\item If  $\dim_\mathbb{R} V<\infty$, the renormalized energy $\Eren(S,\rho)$ is  finite and bounded from above by a constant $C_S$ depending only on $S$. 
\end{enumerate}
\end{cor}
\begin{proof}
Fix  a triangulation of $S$ and any $\delta\in (0,1]$. By basic topological arguments, there is always a smooth ${\pi_1(S)}$-equivariant map $\varphi_0:\tilde{A}_\delta\to \mathbb{S}_V$. Moreover, since the diameter of $\mathbb{S}_V$ is $\pi$, by straightening the map on the 1-skeleton of the triangulation then extending the map by hand to the 2-skeleton, $\varphi_0$ can be chosen so that $\E(\varphi_0\vert_{\mathbf{D}_S}) \leq C'_S$ and $\int_{\mathbf{D}_S\cap \partial {C_{\delta,q}}_\delta} |d\varphi_0|^2 < C'_S$ for all $q\in \{1,...,\mathbf{n}\}$, for some $C'_S>0$ independent of $\rho$  or the dimension of $V$.
Here $C_{\delta,q}$ is defined before Lemma \ref{lem:extension}.

(1):  The implication $\forall p\in  \mathrm{Punc}_S, \blambda_{\rho}(p)=0 \Rightarrow \E(S,\rho)<\infty$ is an easy consequence of Lemma \ref{lem:extension} (1) and the existence of $\varphi_0$ explained above.
Next, the direction $\E(S,\rho)<\infty \Rightarrow  \forall p\in  \mathrm{Punc}_S, \blambda_{\rho}(p)=0$  follows from Fubini's theorem and was already showed in the proof of Lemma \ref{lem:one}. 

(2): This follows from Lemma \ref{lem:extension} (2) and the existence of $\varphi_0$ explained above.

\end{proof}


\begin{cor} \label{coree}
\begin{enumerate}
\item If  $\rho:{\pi_1(S)} \to \End(V)$, $\nu :{\pi_1(S)}\to \End(K)$ are unitary representations such that $\rho \prec \nu$, then
$$ \E(S,\bigoplus^\infty \nu) \leq \E(S, \rho).$$
\item If moreover $\rho$ is irreducible, then for any decomposition $\nu=\bigoplus_{m=1}^\infty \nu_m$,
and for any $\epsilon''>0$, there is $m_0\geq 1$ such that
$$\Eren(S,\nu_{m_0}) \leq \E(S, \rho)+\epsilon''.$$
\end{enumerate}
\end{cor}

\begin{proof}
(1): Suppose $\E(S, \rho)<\infty$, otherwise there is nothing to do. Consider a map $\varphi_0\in \mathscr{H}_{S,\rho}$ with finite energy on the fundamental domain $\mathbf{D}_S$.
 By Fubini's theorem as in the proof of Lemma \ref{lem:one}, for any $\epsilon>0$, there is an arbitrarily small $\delta>0$ such that for any $q\in \{1,...,\mathbf{n}\}$,
 $$\int_{\mathbf{D}_S\cap \partial C_{\delta,q}} |d\varphi_0|^2 \leq \epsilon$$
 where $C_{\delta,q}$ is defined before Lemma \ref{lem:extension}.
 By Proposition \ref{proposition:approx} (1), we get a smooth map $\varphi_0': \tilde{A}_\delta \to \mathbb{S}_{\bigoplus^\infty K}$, which is $\pi_1(S)$-equivariant with respect to $\bigoplus^\infty \nu$, such that 
 $$|\E(\varphi_0'\vert_{\mathbf{D}_S})-\E(\varphi_0\vert_{\mathbf{D}_S})|<\epsilon \quad \text{and}\quad \int_{\mathbf{D}_S\cap \partial C_{\delta,q}} |d\varphi_0'|^2 \leq 2\epsilon \quad \text{for any $q\in \{1,...,\mathbf{n}\}$}.$$
 Applying Lemma \ref{lem:extension} (1) and letting $\epsilon\to 0$, we obtain $ \E(S,\bigoplus^\infty \nu) \leq \E(S, \rho)$.

(2): Suppose moreover that $\rho$ is irreducible, and $\nu=\bigoplus_{m=1}^\infty \nu_m$ with $\nu_m:\pi_1(S)\to \End(K_m)$. 
Using  Proposition \ref{proposition:approx} (2), for any $\epsilon>0$, 
we get an $m_0=m_0(\epsilon)\geq 1$ and a smooth map $\varphi_0'': \tilde{A}_\delta \to \mathbb{S}_{K_{m_0}}$, which is $\pi_1(S)$-equivariant with respect to $\nu_{m_0}$, such that 
 $$|\E(\varphi_0''\vert_{\mathbf{D}_S})-\E(\varphi_0\vert_{\mathbf{D}_S})|<\epsilon \quad \text{and}\quad \int_{\mathbf{D}_S\cap \partial C_{\delta,q}} |d\varphi_0''|^2 \leq 2\epsilon \quad \text{for any $q\in \{1,...,\mathbf{n}\}$}.$$
 Applying Lemma \ref{lem:extension} (2) and choosing $\epsilon$ small enough, we have $\Eren(S,\nu_{m_0}) \leq \E(S, \rho)+\epsilon''$ for any $\epsilon''>0$ fixed. 
\end{proof}


\vspace{1em}

\section{From strong convergence to convergence of harmonic maps} \label{section 4}

We prove the main result, Theorem \ref{thm:main}, in this section. Let us recall our assumptions and notations. Let $S$ be a punctured Riemann surface, with genus $\mathbf{g}$ and $\mathbf{n}\geq0$ punctures. Suppose that $S$ has negative Euler characteristic $\chi(S)<0$.
Let 
$$\rho_j: \pi_1(S)  \to U(N_j)$$
be a sequence of unitary representations.

\subsection{Convergence of the renormalized energy}

In this subsection, we show that the first part of Theorem \ref{thm:main}: strong convergence of a sequence of representations (see Subsection \ref{strong cv} in the Appendix) implies convergence of the renormalized energy.

\begin{thm} \label{energy continuity}

If $\rho_j$ strongly converges, then
$$\lim_{j\to \infty}\Eren(S,\rho_j) = \frac{\pi}{4}(2\mathbf{g}+\mathbf{n}-2) = \frac{\pi}{4}|\chi(S)|.$$
\end{thm}

Note that $\frac{\pi}{4}(2\mathbf{g}+\mathbf{n}-2)$ is exactly $\frac{1}{8}\Area(S,g_{\mathrm{hyp}})$, where  $g_{\mathrm{hyp}}$ is the unique complete, finite area, hyperbolic metric on $S$ compatible with its conformal structure.
The theorem above will follow from Proposition \ref{e<} and Proposition \ref{>e} below.

\begin{prop}\label{e<}
Under the previous assumptions, 
$$\limsup_{j\to \infty}\Eren(S,\rho_j) \leq \frac{\pi}{4}(2\mathbf{g}+\mathbf{n}-2).$$
\end{prop}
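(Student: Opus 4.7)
The plan is to construct, for every $\epsilon>0$, a $\rho_N$-equivariant map with energy at most $\pi/4+\epsilon$ for large $N$, by approximately transferring the $\lambda_\Gamma$-equivariant Poisson-type map $\mathcal{P}_c$ of \eqref{poissonc} along the strong convergence $\rho_N \to \lambda_\Gamma$. First I would fix $c$ close to $1$ so that, by the bound derived in the proof of Theorem~\ref{spherearea}, $E(\mathcal{P}_c|_{(D_\Sigma,g_0)}) \leq c^2\pi/4 \leq \pi/4 + \epsilon/4$, and restrict $\mathcal{P}_c$ to a thick part $\tilde A$, where $A=\Sigma_{\geq\delta}$ is compact, so that its $C^k$ norms are uniformly controlled.

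The crucial ingredient is an asymptotic matrix-coefficient approximation. Because $F_2$ is $C^*$-simple, equation \eqref{strong cv} gives $\lim_N \|\rho_N(z)\| = \|\lambda_\Gamma(z)\|$ for every $z\in\ell^1(\Gamma,\mathbb{C})$, and since both norms are $1$-Lipschitz in $\|\cdot\|_{\ell^1}$ this convergence is uniform on $\ell^1$-bounded subsets of $\mathbb{C}[Q]$ for any finite $Q\subset\Gamma$. The ``$\Leftarrow$'' direction of Theorem~\ref{equivalent}, applied to this asymptotic operator-norm inequality in the style of the proof of Theorem F.4.4 in \cite{BDLHV08}, then yields: for any finite $Q\subset\Gamma$ and any $\epsilon'>0$, for $N$ large enough there exist unit vectors $\eta_{N,1},\ldots,\eta_{N,m}\in V_N^0$ with pairwise orthogonal supports such that
\[
\Big|\delta_{\gamma,e}-\sum_{j=1}^m\langle \rho_N(\gamma)\eta_{N,j},\eta_{N,j}\rangle\Big|<\epsilon',\quad \forall\gamma\in Q.
\]

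Given this, I would run the construction of the proof of Proposition~\ref{proposition:approx} with $\rho=\lambda_\Gamma$, $\nu=\rho_N$, using the asymptotic approximation above in place of the exact hypothesis $\rho\prec\nu$. Concretely, writing $\mathcal{P}_c(x)=\sum_\gamma f_c(x,\gamma)\lambda_\Gamma(\gamma)\delta_e$, truncating to a large finite $Q$, replacing $\delta_e$ by the approximating $\sum_j\eta_{N,j}$, normalizing, and performing the boundary perturbation at the end of the proof of Proposition~\ref{proposition:approx} to enforce exact $\rho_N$-equivariance, I obtain a smooth $\rho_N$-equivariant map $\varphi_N:\tilde A\to S_{V_N^0}$ with $|E(\varphi_N|_{D_\Sigma\cap\tilde A})-E(\mathcal{P}_c|_{D_\Sigma\cap\tilde A})|<\epsilon/4$ for $N$ large. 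Finally, Assumption~\ref{assumption}(3) and Lemma~\ref{simple criterion} (applicable since $\dim_\mathbb{R} V_N^0\geq 3$) give the controlled extension property for $(\Sigma,\mu_0,\rho_N)$ with the constant $\hat C$ uniform in $N$ (the honest fixed points supplied by Assumption~\ref{assumption}(3) let one take $v_{q,j}=v_q$ in the proof of Lemma~\ref{simple criterion}). Choosing $\delta$ small enough, the equivariant extension of $\varphi_N$ to $\tilde\Sigma$ has energy on $D_\Sigma$ within $\epsilon/4$ of $E(\varphi_N|_{D_\Sigma\cap\tilde A})$. Summing the errors gives $\sphereE(\Sigma,\mu_0,\rho_N)\leq\pi/4+\epsilon$ for $N$ large, and letting $\epsilon\to 0$ yields the conclusion.

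The main obstacle I expect is the asymptotic matrix-coefficient approximation: the translation from the operator-norm strong convergence \eqref{strong cv} to a statement about approximating states of $\lambda_\Gamma$ by convex combinations of states of $\rho_N$ is the conceptually delicate step, requiring $C^*$-simplicity of $F_2$ together with a careful invocation of the nontrivial direction of Theorem~\ref{equivalent} in an asymptotic form. Once this is established, the remaining construction is a direct parallel of Proposition~\ref{proposition:approx} and Lemma~\ref{simple criterion}.
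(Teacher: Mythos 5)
Your geometric skeleton (transfer a nearly optimal $\lambda_\Gamma$-equivariant map to a $\rho_N$-equivariant one on the thick part, then use the controlled extension property) is exactly the machinery the paper sets up in Proposition \ref{proposition:approx}, Corollary \ref{coro:ee} and Lemma \ref{simple criterion}. The gap is in the step you yourself flag as delicate: the ``asymptotic matrix-coefficient approximation'' does not follow from \eqref{strong cv} plus the $\Leftarrow$ direction of Theorem \ref{equivalent}. Two distinct obstructions arise. First, strong convergence gives $\lim_N\|\rho_N(f)\|=\|\lambda_\Gamma(f)\|$ only \emph{for each fixed} $f$; to invoke Theorem \ref{equivalent} you would need $\|\lambda_\Gamma(f)\|\leq\|\rho_N(f)\|$ for \emph{all} $f$ at a single $N$, and the quantifier order cannot be interchanged in the $C^*$-algebraic proof of \cite[Theorem F.4.4]{BDLHV08} without further input. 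The paper's device for this is to argue by contradiction and form the infinite direct sum $\nu=\bigoplus_m\rho_{N_m}$ over a bad subsequence, for which $\|\nu(f)\|=\sup_m\|\rho_{N_m}(f)\|\geq\|\lambda_\Gamma(f)\|$ holds exactly, yielding genuine weak containment in $\nu$. Second, even granted weak containment in the direct sum, descending to approximating vectors inside a \emph{single} summand $V^0_{N_{m_0}}$ (which is what you need to build one $\rho_N$-equivariant map) requires the approximated representation to be irreducible — this is Proposition \ref{proposition:approx}(2)/Corollary \ref{coro:ee}(2), resting on \cite[Proposition F.1.4]{BDLHV08}. The regular representation $\lambda_{F_2}$ is not irreducible, and the state $\delta_{\gamma,e}$ you are trying to approximate is its canonical trace, not a pure state. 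This is precisely why the paper does not use $\mathcal{P}_c$ from \eqref{poissonc} but instead the embedding $\mathscr{P}$ of \eqref{mathscrP}, equivariant for the \emph{irreducible} boundary representation $\rho_B\sim\lambda_\Gamma$ with $E(\mathscr{P}\vert_{D_\Sigma})=\pi/4$ (Lemma \ref{recall}).

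For what it is worth, the approximation you want is in fact true under the standing hypotheses: one can take $\eta_{N,j}=e_j/\sqrt{\dim V^0_N}$ for an orthonormal basis, and convergence of the normalized traces $\frac{1}{\dim V^0_N}\tr\rho_N(\gamma)\to\delta_{\gamma,e}$ follows from strong convergence together with the unique trace property of $C^*_r(F_2)$ (any weak-$*$ limit of the tracial states $\tau_N$ is a tracial state on $C^*_r(F_2)$, hence the canonical trace), or directly from fixed-point counts for random permutations. But neither of these inputs is among the tools you cite, so as written the pivotal step is unproved; either supply the unique-trace argument, or follow the paper's route through the direct sum and the irreducible representation $\rho_B$.
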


\begin{proof}

Arguing towards a contradiction, let us assume that for some $\epsilon''>0$, there is a sequence  $\{j_m\}_{m\geq 1}$ converging to $\infty$ such that 
\begin{equation}\label{contrid}
\inf_{m\geq 1} \Eren(S,\rho_{j_m}) >\frac{\pi}{4}(2\mathbf{g}+\mathbf{n}-2)+\epsilon''.
\end{equation}
Set $$\nu:=\bigoplus_{m\geq 1} \rho_{j_m}.$$
Let $\mathscr{P} :\tilde{S}\to \mathbb{S}_2(\partial \tilde{S})$ and $\underline{\rho}_B$ be, respectively,  the map in $\mathscr{H}_{S,\underline{\rho}_B}$ and boundary representation defined in (\ref{mathscrP}) and (\ref{rhoB}). 
Note that since from Lemma \ref{recallli} (1),  $\underline{\rho}_B \sim \lambda_{\pi_1(S)}$,  we get
$$\|\underline{\rho}_B(f)\|= \|\lambda_{\pi_1(S)}(f)\|\quad \text{for all $f\in\mathbb{C}[{\pi_1(S)}]$}$$
by Theorem \ref{equivalent}.
Since $\rho_j$ strongly converges by assumption, 
$$\lim_{j\to \infty} \|\rho_j(f)\|=\|\lambda_{\pi_1(S)}(f)\|    \quad \text{for all $f\in\mathbb{C}[{\pi_1(S)}]$}.$$
Hence, we clearly have
$$\|\underline{\rho}_B(f)\|\leq \|\nu(f)\|    \quad \text{for all $f\in\mathbb{C}[{\pi_1(S)}]$}$$
which means according to Theorem \ref{equivalent} that $\underline{\rho}_B\prec \nu$. 
Next, we know from Lemma \ref{recallli} that $\underline{\rho}_B$ is irreducible and $\E(\mathscr{P}\vert_{\mathbf{D}_S})  = \frac{\pi}{4}(2\mathbf{g}+\mathbf{n}-2)$.
We can then apply Corollary \ref{coree} (2) to $\rho:=\underline{\rho}_B$, $\nu:=\bigoplus_{m\geq 1} \rho_{j_m}$, and conclude that for some $m_0\geq 1$,
$$\Eren(S,\rho_{j_{m_0}}) \leq \E(S,\underline{\rho}_B) +\epsilon''\leq \E(\mathscr{P}\vert_{\mathbf{D}_S})  +\epsilon'' = \frac{\pi}{4}(2\mathbf{g}+\mathbf{n}-2) +\epsilon''.$$
That contradicts (\ref{contrid}), as wanted.
\end{proof}

We observe an easy consequence of the strong convergence of $\rho_j$ (see (\ref{blambda}) for the definition of $\blambda_{\rho_j}(p)$):
\begin{lem} \label{blam0}
If $\rho_j$ strongly converges, for any puncture $p\in \mathrm{Punc}_S$, we have 
\begin{equation*} 
\lim_{j\to \infty}\blambda_{\rho_j}(p)=0.
\end{equation*}
\end{lem}
\begin{proof}
Indeed, consider any element $g\in \pi_1(S)$ in the conjugacy class determined by small embedded oriented loops around $p$ in $S$.  For any $\varepsilon>0$, there is a function $F_\varepsilon\in \ell^2(\pi_1(S))$ of unit $L^2$-norm  such that $\|\lambda_{\pi_1(S)}(g)F_\varepsilon-F_\varepsilon\|\leq \varepsilon$ where $\lambda_{\pi_1(S)}$ is the regular representation (e.g. take $F_\varepsilon$ to be constant on its support and  supported on $\{g,g^2,....,g^{N_\varepsilon}\}$ for some large integer $N_\varepsilon$). This implies in terms of operator norm: $\|\lambda_{\pi_1(S)}(g)+\lambda_{\pi_1(S)}(e)\|=2$ where $e\in \pi_1(S)$ is the identity element. By strong convergence, we have
$\lim_{j\to \infty} \|\rho_j(g)+\rho_j(e)\|=2$. But this means by Cauchy-Schwarz that for any $\varepsilon>0$, for all large $j$, there are unit norm vectors $v_j$ such that $\|\rho_j(g)v_j-v_j\|\leq \varepsilon$, which is exactly the lemma.

\end{proof}

The next proposition treats the opposite inequality. Its proof  is the key place where the concept of strong convergence for representations is combined with harmonic maps. There, we construct a limit of harmonic maps, equivariant with respect to a limit representation, which we show is weakly equivalent to the regular representation.

\begin{prop}\label{>e}
Under the above assumptions, 
$$\liminf_{j\to \infty}\Eren(S,\rho_j) \geq  \frac{\pi}{4}(2\mathbf{g}+\mathbf{n}-2).$$
\end{prop}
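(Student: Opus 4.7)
The plan is to argue by contradiction, assuming along a subsequence that $E_N := \sphereE(\Sigma,\mu_0,\rho_N)$ converges to some $E_\infty < \pi/4$, and to extract a limiting branched minimal immersion equivariant with respect to a representation weakly contained in $\lambda_\Gamma$, contradicting Corollary \ref{slack}.

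\textbf{Step 1 (existence of realizing maps).} By Corollary \ref{coro:realize}, for each $N$ pick a $\Gamma$-equivariant branched minimal immersion $\psi_N \in \mathscr{H}_{\Sigma,\rho_N}$, $\psi_N:\tilde{\Sigma}\to S_{V_N^0}$, with $E(\psi_N|_{D_\Sigma}) = E_N$.

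\textbf{Step 2 (no bubbling and compactness).} Any nonconstant harmonic sphere $S^2\to S^m(\sqrt{8})$ is a minimal $2$-sphere containing a totally geodesic $S^2(\sqrt{8})$ of area $32\pi$, hence has energy at least $32\pi$. Since $E_N\le \pi/4 + o(1)$ is well below this threshold, no bubbling occurs. Combined with Theorem \ref{epsilon reg} and a covering argument by small disks, this yields uniform $C^m$ bounds on $\psi_N$ on every compact subset of $\tilde{\Sigma}$ (including inside cusps, where equivariance under the parabolic $\rho_N(\gamma_q)$ plus the uniform energy bound controls $\psi_N$ on lifts of horoball neighborhoods).

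\textbf{Step 3 (extracting a limit via matrix coefficients).} Since the target spheres $S_{V_N^0}$ vary with $N$, I pass to the limit at the level of matrix-coefficient kernels
\[
K_N^\gamma(x,y):=\langle \rho_N(\gamma)\psi_N(y),\psi_N(x)\rangle_{V_N^0},\qquad \gamma\in\Gamma,\ x,y\in\tilde\Sigma.
\]
These are bounded by $1$ and, by Step 2, smooth with uniform $C^m$ bounds on compact subsets of $\tilde{\Sigma}\times\tilde{\Sigma}$. A diagonal Arzel\`a--Ascoli argument over the countable group $\Gamma$ yields a further subsequence along which $K_N^\gamma \to K_\infty^\gamma$ in $C^\infty_{\mathrm{loc}}$ for every $\gamma$. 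The limit family $\{K_\infty^\gamma\}$ is positive-definite in the GNS sense (positivity being preserved in the limit), so a standard GNS-type reconstruction produces a Hilbert space $H_\infty$, a unitary representation $\rho_\infty:\Gamma\to\End(H_\infty)$, and a smooth $\Gamma$-equivariant map $\psi_\infty:\tilde{\Sigma}\to S_{H_\infty}$ realizing these kernels: $\langle \rho_\infty(\gamma)\psi_\infty(y),\psi_\infty(x)\rangle=K_\infty^\gamma(x,y)$.

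\textbf{Step 4 (the limit representation is weakly contained in $\lambda_\Gamma$).} By the strong convergence of $\rho_N$ to $\lambda_\Gamma$ and the $C^*$-simplicity of $F_2$, (\ref{strong cv}) gives $\lim_N\|\rho_N(f)\|=\|\lambda_\Gamma(f)\|$ for all $f\in\ell^1(\Gamma,\mathbb{C})$. Since every matrix coefficient of $\rho_\infty$ is the pointwise limit of matrix coefficients of $\rho_N$, one obtains $\|\rho_\infty(f)\|\le\|\lambda_\Gamma(f)\|$ for every $f\in\ell^1(\Gamma,\mathbb{C})$, hence $\rho_\infty\prec\lambda_\Gamma$ by Theorem \ref{equivalent}.

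\textbf{Step 5 (the limit is a branched minimal isometric immersion).} All pointwise geometric data of $\psi_N$ can be recovered from the $K_N^e$ kernel near the diagonal (e.g.\ $\|d_x\psi_N(v)\|^2=-\partial^2_{t=0}K_N^e(x,x+tv)$), and likewise for harmonicity and conformality. Their $C^\infty_{\mathrm{loc}}$ convergence implies that $\psi_\infty$ is conformal and harmonic, so $\psi_\infty^*g_{H_\infty}=f_\infty^2 g_0$ for some smooth $f_\infty\ge 0$. The map $\psi_\infty$ is nonconstant: otherwise $\rho_\infty$ would have a fixed vector, forcing $\mathbf{1}\prec\rho_\infty\prec\lambda_\Gamma$, which is impossible for the non-amenable group $F_2$. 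Hence $f_\infty$ vanishes at most at a discrete set and $\psi_\infty$ is a branched minimal isometric immersion.

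\textbf{Step 6 (energy passes to the limit).} From $C^\infty_{\mathrm{loc}}$ convergence together with the uniform energy bound and the absence of bubbling, Fatou/dominated convergence yields
\[
E(\psi_\infty|_{D_\Sigma}) \le \liminf_N E(\psi_N|_{D_\Sigma}) = E_\infty < \tfrac{\pi}{4} = \spherearea(\Sigma,\lambda_\Gamma).
\]

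\textbf{Step 7 (contradiction via Corollary \ref{slack}).} Apply Corollary \ref{slack} with $\rho_1:=\rho_\infty$ and $\varphi_1:=\psi_\infty$ (noting that the proof of that corollary only uses $\rho_1\prec\lambda_\Gamma$). The conclusion is $E(\psi_\infty|_{D_\Sigma})=\pi/4$ and $f_\infty\equiv 1/\sqrt{8}$, contradicting Step 6. Therefore $\liminf_N \sphereE(\Sigma,\mu_0,\rho_N)\ge \pi/4$.

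\textbf{Main obstacle.} The hardest step is Step 3: since the target spaces change with $N$, one cannot take a naive limit of $\psi_N$. The remedy is to work entirely in terms of matrix-coefficient kernels, but one must carefully combine the $\varepsilon$-regularity and diagonal extraction to produce a smooth limit kernel, and then reconstruct a Hilbert-space-valued map via GNS whose representation inherits $\prec\lambda_\Gamma$ from strong convergence. A secondary technicality is controlling $\psi_N$ near cusps uniformly in $N$, which requires using equivariance under parabolic elements of $\rho_N$ together with the uniform total-energy bound.
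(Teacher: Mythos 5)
Your proposal follows the same overall strategy as the paper's proof: extract a limiting equivariant map and a limiting unitary representation from the minimizers $\psi_N$, show the limit representation is weakly contained in $\lambda_\Gamma$ via strong convergence, and conclude from the energy-comparison results of Section 2. Two sub-steps are implemented differently. For the limit extraction (your Step 3), the paper embeds all $V_N^0$ into one common Hilbert space $H$ and composes with unitaries $F_N$ chosen so that $F_N\circ\psi_N$ converges pointwise on a dense set of points, then defines $\rho'$ directly by $\rho'(g)\psi_\infty(z_j):=\psi_\infty(g.z_j)$; your kernel/GNS reconstruction is an equivalent and arguably cleaner device, and your Step 4 norm estimate is the same computation as the paper's claim (\ref{claim ro'}). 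For the endgame, the paper does not pass through Corollary \ref{slack}: it only needs that $\psi_\infty\in\mathscr{H}_{\Sigma,\rho'}$ witnesses $\sphereE(\Sigma,\mu_0,\rho')\le\liminf_N\sphereE(\Sigma,\mu_0,\rho_N)$, and then applies Corollary \ref{coro:ee}(1) together with Lemma \ref{=+} to get $\sphereE(\Sigma,\mu_0,\rho')\ge\pi/4$ directly. Your detour through Corollary \ref{slack} is valid ($\rho_\infty\prec\lambda_\Gamma$ upgrades to weak equivalence by $C^*$-simplicity, and nonconstancy follows, as you say, from non-amenability of $F_2$), but it forces you to establish in Step 5 that $\psi_\infty$ is a genuine branched minimal immersion, something the paper can postpone to Theorem \ref{almost hyperbolic minimal surfaces}.

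The one genuine soft spot is Step 2. Ruling out energy concentration by invoking a bubble-energy threshold is circular here: extracting the bubble requires taking a limit of harmonic maps into targets of unboundedly growing dimension, i.e.\ exactly the machinery of your Step 3, which you set up only afterwards. (The quoted threshold is also off: the minimization takes place in the unit sphere $S_{V_N^0}$, and a nonconstant harmonic sphere there has energy $\ge 4\pi$ by conformality plus monotonicity, not because it ``contains a totally geodesic sphere''.) Since your $\psi_N$ are energy minimizers among equivariant maps, the paper's replacement argument is the right tool: if energy concentrated at finitely many orbit points, an equivariant local replacement would strictly decrease the energy, contradicting minimality. With that substitution your proof closes. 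Your worry about the cusps is a non-issue: only convergence on compact subsets of $\tilde{\Sigma}$ is needed, and lower semicontinuity of the energy over the noncompact $D_\Sigma$ follows from Fatou.
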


\begin{proof}
For each $j\geq1$, let $\psi_j:\tilde{S}\to \mathbb{S}^{2N_j-1}$ be a harmonic representative of $(S,\rho_j)$: it is a harmonic map in $\mathscr{H}_{S,\rho_j}$ given by Theorem \ref{realize}, so that
 $$\Eren(\psi_j\vert_{\mathbf{D}_S}) = \Eren(S,\rho_j).$$
Due to Lemma \ref{monotonicity}, on every compact subset of $\mathbf{D}_S$, the energy of $\psi_j$ is uniformly bounded since $\Eren(S,\rho_j)$ is uniformly bounded thanks to Corollary \ref{coro:basic} (2).

By standard harmonic map theory \cite{SU82}, after picking a subsequence and using the ${\pi_1(S)}$-equivariance of the map $\psi_j$, a priori there are finitely many points $y_1,...,y_k\in \mathbf{D}_S$ where the energy might concentrate as $j\to \infty$. Using the $\epsilon$-regularity theorem, Theorem \ref{epsilon reg}, we can replace $\psi_j$ in a small neighborhood of the ${\pi_1(S)}$-orbit of $\bigcup_{j=1}^k y_j$ equivariantly by a map with renormalized energy strictly smaller than $\psi_j$ for $j$ large. But since $\psi_j$ is energy minimizing among all smooth equivariant maps, we have a contradiction (this is the usual replacement argument appearing in \cite[proof of Theorem 5.1]{SU82} for instance). 
 Thus, the energy of $\psi_j$ does not concentrate around any point as $j\to \infty$. 
Next, by the $\epsilon$-regularity theorem again, Theorem \ref{epsilon reg}, 
 on each compact subset of $\tilde{S}$, the maps $\psi_j$ are uniformly bounded in the $C^m$-topology for any $m\geq0$. 
 
 In order to construct a limit, let us view all the spaces $\mathbb{C}^{N_j}$ as embedded as linear subspaces of one common Hilbert space $H$ with unit sphere $\mathbb{S}_H$. 
 Choose a dense countable sequence of points $\{p_t\}_{t\geq 0}$ in $\tilde{S}$.
 Then, since the unitary group of $H$ acts transitively on (complex) orthonormal bases of $H$, there is a sequence of unitary transformations $F_j$ of $H$ such that (after taking a subsequence in $j$ if necessary) for any given $t\geq0$,
 the sequence $F_j\circ \psi_j(p_t)$ converges to a point in $\mathbb{S}_H$ as $j\to\infty$. 
 Since the maps $\psi_j$ are uniformly Lipschitz on compact subsets, by an Arzel\`{a}-Ascoli argument, $F_j\circ \psi_j$ converges in the $C^0$-topology on compact subsets to a limit locally Lipschitz map 
  $$\psi_\infty: \tilde{S}\to \mathbb{S}_{H}.$$ 
 Because of the uniform $C^k$-bound on compact subsets, the convergence can be upgraded to $C^k$-convergence on compact subsets for any $k\geq0$. For instance, in order to check local $C^1$-convergence, note that the second derivative $D^{(2)} (F_j\circ \psi_j)$ is uniformly bounded on compact subsets, so a Taylor expansion at each $p_t$ ensures that the first derivative at $p_t$, $D^{(1)} (F_j\circ \psi_j)(p_t)$, must converge, which is enough to show local $C^1$-convergence.
 For later use, after picking a subsequence if necessary$j_l$, we can assume that $\lim_{l\to \infty} \Eren(S,\rho_{j_l})$ exists and is the liminf of the original sequence.

We now bound the energy of the limit map $\psi_\infty$. 
By the lower semicontinuity of the energy under convergence of harmonic maps, Lemma \ref{monotonicity} and Lemma \ref{blam0}:
 \begin{equation*}
 \begin{split}
 \E(\psi_\infty\vert_{\mathbf{D}_S}) & =\lim_{\delta\to 0}\E(\psi_\infty\vert_{\mathbf{D}_S\cap \tilde{A}_\delta}) \\
 & \leq \lim_{\delta\to 0}\liminf_{j\to \infty}\E(\psi_j\vert_{\mathbf{D}_S\cap \tilde{A}_\delta})\\
 & = \lim_{\delta\to 0} \liminf_{j\to \infty}[\Eren(S,\rho_j)+\sum_{p\in \mathrm{Punc}_S} \frac{\blambda_{\rho_j}(p)^2}{4\pi} \log \frac{1}{\delta}]\\
 & = \liminf_{j\to \infty}\Eren(S,\rho_j). 
 \end{split}
  \end{equation*}
Hence, 
 \begin{equation}\label{lininf e}
 \E(S,\rho') \leq  \E(\psi_\infty\vert_{\mathbf{D}_S})\leq \liminf_{j\to \infty}\Eren(S,\rho_j).
 \end{equation}


Next, let us explain how the limit map $\psi_\infty$ determines a unitary representation $\rho':{\pi_1(S)}\to \End(H).$
For the sake of simplicity, we write ``converge'' and ``$\lim_{j\to \infty}$'' instead of ``subsequentially converge''.

By reducing the Hilbert space $H$, we will assume without loss of generality that no closed linear (strict) subspace of $H$ contains $\psi_\infty(\tilde{S})$. In other words, there is a sequence of points $z_1,z_2,...,z_i,...\in \tilde{S}$, such that the vectors $\psi_\infty(z_1),\psi_\infty(z_2),...\in \psi_\infty(\tilde{S})$ are linearly independent, and after applying the Gram-Schmidt procedure, we get a Hilbert orthonormal basis $e_1=\psi_\infty(z_1),e_2,...,e_i,...$ of $H$.
Note that $F_j\circ \psi_j(z_i)$ converges to $\psi_\infty(z_i)$.
Thus, for any finite set of complex numbers $a_j$, 
$\sum_{i} a_i F_j\circ \psi_j(z_i)$ converges to $\sum_{i} a_i \psi_\infty(z_i)$. 
Next, for any $g\in {\pi_1(S)}$, and any $z_i$, set
 \begin{equation}\label{ro'}
\rho'(g) \psi_\infty(z_i) := \psi_\infty(g.z_i) = \lim_{j\to \infty} (F_j\circ \rho_j(g)\circ F_j^{-1}) \circ F_j\circ\psi_j(z_i).
\end{equation}
Since $e_i$ is a finite linear combination of $\psi_\infty(z_1),\psi_\infty(z_2),...,\psi_\infty(z_i)$, the above formula defines by linearity a linear transformation $\rho'(g):H\to H$ which is easily checked to be unitary, since $\rho_N(g)$ is unitary. Moreover, by the formula above again, we clearly have $\rho'(gg')=\rho'(g)\rho'(g')$ since the analogue property holds for $F_j\circ \rho_j(.)\circ F_j^{-1}$. This finishes the definition of the unitary representation $\rho'$ and the map $\psi_\infty$, which by construction is ${\pi_1(S)}$-equivariant with respect to $\rho'$, namely $\psi_\infty$ is an element of $\mathscr{H}_{S,\rho'}$:
\begin{equation}\label{rep map}
\rho':{\pi_1(S)}\to \End(H),\quad \psi_\infty:\tilde{S}\to \mathbb{S}_{H}.
\end{equation}

We claim that for all $f\in\ell^1({\pi_1(S)},\mathbb{C})$,
 \begin{equation}\label{claim ro'}
 \|\rho'(f)\|\leq \limsup_j\|\rho_j(f)\|.
  \end{equation}
Indeed, by (\ref{ro'}) and using the previous notations, for any finite linear combination $f=\sum_{k} \alpha_k g_k \in \mathbb{C}[{\pi_1(S)}]$, and any finite linear combination $\sum_i a_i \psi_\infty(z_i)$, 
\begin{align*}
\|\rho'(f) \big(\sum_i a_i \psi_\infty(z_i)\big)\| & =\|\lim_{j\to \infty} (F_j\circ \rho_j(f)\circ F_j^{-1}) \big(\sum_i a_i F_j\circ \psi_j(z_i)\big)\| \\
& = \lim_{j\to \infty}\|(F_j\circ \rho_j(f)\circ F_j^{-1}) \big(\sum_i a_i F_j\circ \psi_j(z_i)\big)\|\\
& \leq \big(\limsup_{j\to\infty}\|\rho_j(f)\|\big) \|\sum_i a_i \psi_\infty(z_i)\|.
\end{align*}
That is enough to show the claim (\ref{claim ro'}).
Since by assumption $\rho_j$ converges strongly, we get from (\ref{claim ro'}) that for all $f\in\ell^1({\pi_1(S)},\mathbb{C})$:
$$\|\rho'(f)\|\leq\|\lambda_{\pi_1(S)}(f)\|.$$
Thus by Theorem \ref{equivalent}, $\rho'\prec \lambda_{\pi_1(S)}$ and so by $C^*$-simplicity of ${\pi_1(S)}$ \cite{Powers75} \cite{DLH07},
\begin{equation}\label{rho'}
\rho'\sim \lambda_{\pi_1(S)}.
\end{equation}
By (\ref{rho'}), Lemma \ref{=+}, Theorem \ref{spherearea} and Corollary \ref{coree} (1) applied to $\rho:=\rho'$ and $\nu:= \lambda_{\pi_1(S)}$,
 we find that 
$$\frac{\pi}{4}(2\mathbf{g}+\mathbf{n}-2) = \E(S,\bigoplus^\infty \lambda_{\pi_1(S)}) \leq    \E(S,\rho').$$
Combined with (\ref{lininf e}), this finishes the proof.
For future use, we record the following consequence:
\begin{equation}\label{limit energy}
 \E(S,\rho') = \E(\psi_\infty\vert_{\mathbf{D}_S}) = \liminf_{j\to \infty}\Eren(S,\rho_j) = \frac{\pi}{4}(2\mathbf{g}+\mathbf{n}-2).
\end{equation}

\end{proof}

\begin{rem}\label{magee}
M. Magee pointed out that, when $\E(S,\rho_j)$ is finite, Proposition \ref{>e}  alternatively follows from a  generalization of the resolvent method of \cite{HM23} applied to flat $\mathbb{C}^N$-bundles over hyperbolic surfaces \cite{Zargar22} \cite{Hide23}. Our proof of Proposition \ref{>e} is different and has the advantage of producing a limit harmonic map, which will be crucial in the proof of the main part of Theorem \ref{thm:main} in the next subsection.
\end{rem}

\subsection{Convergence of the pullback metric} \label{subsection:cv metric}

To finish the proof of the main theorem, it remains to identify the limit of the pullback metric.
For each $N$, let $\psi_j:\tilde{S}\to \mathbb{S}^{2N_j-1}$ be a harmonic representative of $(S,\rho_j)$.
Let $g_{\mathrm{Eucl}}$ be the standard Euclidean metric on $\mathbb{S}^{2N_j-1}$ and let $g_{\mathrm{hyp}}$ be the unique complete, conformal, hyperbolic metric on the Riemannian surface $S$. By equivariance of $\psi_j$, the pullback metric $(\psi_j)^*g_{\mathrm{Eucl}}$ descends to a metric on $S$ still denoted by $(\psi_j)^*g_{\mathrm{Eucl}}$.

\begin{thm} \label{pullback metric}
If $\rho_j$ strongly converges, then the harmonic representatives $\psi_j$ satisfy
$$\lim_{j\to \infty} (\psi_j)^*g_{\mathrm{Eucl}} = \frac{1}{8}g_{\mathrm{hyp}}$$
 in the $C^\infty$ topology on compact subsets of $S$.
\end{thm}

\begin{proof}

Let $\rho':{\pi_1(S)}\to \End(H)$, $\psi_\infty:\tilde{S}\to \mathbb{S}_{H}$ be the limit unitary representation and limit map constructed in (\ref{rep map}) in the proof of Proposition \ref{>e}. 
The map $\psi_\infty$ is the limit of a subsequence of $\psi_j:\tilde{S}\to \mathbb{S}^{2N_j-1}$, after composing with  unitary transformations $F_j$. The convergence is smooth on any compact subset of $\tilde{S}$.
By (\ref{rho'}), we have $\rho'\sim \lambda_{\pi_1(S)}$.

By (\ref{limit energy}), $\E(\psi_\infty\vert_{\mathbf{D}_S}) = \E(S,\rho') = \frac{\pi}{4}(2\mathbf{g}+\mathbf{n}-2)$.
By the equality case of Corollary \ref{slack}, 
\begin{equation}\label{slack consequence}
(\psi_\infty)^*g_{H}  = \frac{1}{8}g_{\mathrm{hyp}}
\end{equation}
where $g_{\mathrm{hyp}}$ is the unique  hyperbolic metric on $S$ compatible with its conformal structure.

By the way $\psi_j$ converges to $\psi_\infty$,  for any fixed compact subset $G$ of $\tilde{S}$,
we have for any $m\geq 0$:
\begin{equation}\label{convergence psi}
\lim_{j\to \infty}\|\psi _{j}^*g_{\mathrm{Eucl}} -\frac{1}{8}g_{\mathrm{hyp}}\|_{C^m(G)}=0.
\end{equation}
This implies the desired conclusion that $\psi _{j}^*g_{\mathrm{Eucl}}$, viewed as a Riemannian metric on $S$, converges to $\frac{1}{8}g_{\mathrm{hyp}}$ on compact subsets of $S$ in the $C^\infty$ topology.

\end{proof}

\vspace{1em}

\section{Applications}\label{section 5}

\subsection{Limit of the $N$th average renormalized energy} \label{average}
Let $S$ be a punctured Riemann surface and suppose that $\pi_1(S)$ is a free group $F_k$ of rank $k\geq 2$, namely $S$ has  at least one puncture. 
Its Euler characteristic is
$$\chi(S) = 1-k.$$
Let $g_{\mathrm{hyp}}$ be its hyperbolic metric.
Unitary representations of $F_k$ into $U(N)$ are in one-to-one correspondence with $k$-tuple $(u_1,...,u_k)\in U(N)^k$. 
For $N\geq 1$, set 
$$\mathbb{E}_N(S) := \text{ average of $\Eren(S,\rho)$ over all unitary representations $\rho:\pi_1(S)\to U(N)$}$$
where the average is taken with respect to the Haar measure on $U(N)^k$. 
This measure is preserved by the action of the outer automorphism group $\text{Out}(F_k)$ (see \cite[Lemma 1.5]{CMP19}), which means that $\mathbb{E}_N(S)$ \emph{does not} depend on the identification $\pi_1(S)\approx F_k$, and in fact induces a function on moduli space of Riemann surfaces. When $\pi_1(S)$ is not free (a case we will not consider here), the definition of $\mathbb{E}_N(S)$ can naturally be generalized by taking the average of $\Eren(S,\rho)$ on the moduli space of unitary representations with respect to the Atiyah-Bott-Goldman symplectic volume form \cite{AB83} \cite{Goldman84}.

In the next statement, by ``random sequence of unitary representations $\tau_N:\pi_1(S)\to U(N)$'', we  mean a sequence sampled on $\Pi_{N\geq 1} U(N)^k$ from the standard product Haar probability measure. Theorem \ref{app1} is a corollary of the following theorem:
\begin{thm} \label{concentration}
Let $S$ be a punctured Riemannian surface with $\pi_1(S)$ isomorphic to a free group of rank $k\geq2$. 
Consider a random sequence of unitary representations  $\tau_N:\pi_1(S)\to U(N)$. For each $N\geq1$, let $\psi_N:\tilde{S}\to (\mathbb{S}^{2N-1},g_{\mathrm{Eucl}})$ be a harmonic representative of $(S,\tau_N)$.
Then  
 $$ \lim_{N\to \infty} \Eren(S,\tau_N) = \frac{\pi}{4}(k-1) \text{ almost surely}$$
and $(\psi_N)^*g_{\mathrm{Eucl}}$ converges to $\frac{1}{8}g_{\mathrm{hyp}}$  almost surely  in the $C^\infty$-topology on compact sets of $S$.
In particular,
 $$ \lim_{N\to \infty}\mathbb{E}_N(S)= \frac{\pi}{4}(k-1).$$
\end{thm}
\begin{proof}
If clearly suffices to prove the first part of the theorem because by Lemma \ref{coro:basic}, $\Eren(S,\rho)\leq C_S$ for a constant depending only on $S$.
By \cite{CM14},  a random sequence $\tau_N:\pi_1(S)\to U(N)$ strongly converges almost surely (see Theorem \ref{CM} in the Appendix). The statement then follows from Theorem \ref{thm:main}.
\end{proof}

\subsection{Special immersions of surfaces into $\mathbb{R}^n$}

\begin{thm} \label{thm:special}
Let $\epsilon>0$. For any closed hyperbolic surface $(\Sigma,g_{\mathrm{hyp}})$, there exists a finite degree covering $(\Sigma',g_{\mathrm{hyp}})$ and a harmonic immersion into a Euclidean unit sphere
$$\psi:(\Sigma',g_{\mathrm{hyp}}) \to (\mathbb{S}^{n_\epsilon},g_{\mathrm{Eucl}})$$
 such that $\psi^* g_{\mathrm{Eucl}} $ is $\epsilon$-close to $\frac{1}{8} g_{\mathrm{hyp}}$ in the $C^2$-topology. 
\end{thm}
\begin{proof}
Set $S$ be the Riemann surface corresponding to $(\Sigma,g_{\mathrm{hyp}})$.
By \cite{LM25} (see Theorem \ref{LM} in the Appendix), there is a sequence of unitary representations 
$$\rho_j:\pi_1(S) \to U(N_j)$$
which strongly converges, and 
such that $\rho_j(\pi_1(S))$ is a finite subgroup of $U(N_j)$. The kernel $\ker(\rho_j)$ is thus a finite index subgroup of $\pi_1(S)$.
This implies that, if $\psi_j:\tilde{S}\to \mathbb{S}^{2N_j-1}$ is a harmonic representative of $(S,\rho_j)$, 
the map $\psi_j$ factors as follows: there is a finite covering $S'$ of $S$ with fundamental group $\ker(\rho_j)$,  and a map 
$$\psi'_j: S' \to \mathbb{S}^{2N_j-1}$$   
such that
$$\psi_j =\psi'_j \circ \Pi$$
where $\Pi:\tilde{S}\to S'$ is the natural projection. This  $\psi'_j $ is a harmonic map and by Theorem \ref{thm:main}, the pullback metric $(\psi'_j)^*g_{\mathrm{Eucl}}$ converges to  $\frac{1}{8}g_{\mathrm{hyp}}$ and in particular $\psi'_j$ is an immersion for large $j$. The theorem is proved.

\end{proof}

\begin{rem}[Meaning of $\frac{1}{8}$]\label{meaning}
For $\epsilon>0$ small, the harmonic map $\psi$ in Theorem \ref{thm:special} is given by $n_\epsilon+1$ coordinates functions $(\psi)^i:(\Sigma',g_{\mathrm{hyp}})\to \mathbb{R}$ which are almost Laplace eigenfunctions with eigenvalue $2$. Indeed, the image $\psi(\Sigma')$ is almost a minimal surface in $\mathbb{S}^{n_\epsilon}$, and coordinate functions on 2-dimensional minimal surfaces in Euclidean unit spheres are Laplace eigenfunctions with eigenvalue $2$ by \cite{Takahashi66}.  But $2$ is exactly the bottom of the Laplace operator on the rescaled hyperbolic plane $(\mathbb{H}^2,\frac{1}{8}g_{\mathrm{hyp}})$.  Optimistically, it could be that Theorem \ref{thm:special}  holds (resp. cannot hold) if the factor $\frac{1}{8}$ is replaced by any larger (resp. smaller) constant.
\end{rem}

\subsection{Almost hyperbolic minimal surfaces in spheres} \label{min surff}


Consider a Riemann surface $T$ and 
$$\varphi:T\to (M,g_M)$$
a branched minimal immersion of $T$ into some Riemannian manifold $(M,g_M)$. Then the pullback metric $\varphi^*g_M$ is a nondegenerate Riemannian metric everywhere on $T$ except at a discrete set of points. It induces a well-defined metric on $T$, and its Gaussian curvature, denoted by $K_{\varphi^*g_M}$, is well-defined almost everywhere. 
The map $\varphi$ is said to be  \emph{simple} if there is no nontrivial branched covering map $f:T\to \hat{T}$ and there is no map $\hat{\varphi}:\hat{T}\to M$ so that $\varphi=\hat{\varphi} \circ f$. 

\begin{thm} \label{min surf 1}
For any $j\geq 1$, there are a closed Riemann surface $T_j$ and a simple, branched, minimal immersion
$$\varphi_j: T_j \to (\mathbb{S}^{n_j},g_{\mathrm{Eucl}})$$
such that
\begin{enumerate}
\item $\lim_{j\to \infty} \frac{1}{\Area(T_j, \varphi_j^*g_{\mathrm{Eucl}})} \int_{T_j} |K_{\varphi_j^*g_{\mathrm{Eucl}}}+8|=0$,
\item $(T_j,\varphi_j^*g_{{\mathrm{Eucl}}})$ Benjamini-Schramm  converges to $(\mathbb{H}^2,\frac{1}{8}g_{\mathrm{hyp}})$ as $j\to \infty$.
\end{enumerate}
\end{thm}

Benjamini-Schramm convergence was originally a notion of convergence for random graphs. It has been recently extended to manifolds, see for instance Ab\'{e}rt-Biringer \cite[Section 1]{AB22}.
In our statement, it means the following: 
there are regions $\Omega_j  \subset T_j$ such that 
$$ \lim_{j \to \infty}  \frac{\mathrm{Area}\bigl(\Omega_j, \varphi_j ^*g_{\mathrm{Eucl}} \bigr)}{\mathrm{Area}\bigl(T_j, \varphi_j ^*g_{\mathrm{Eucl}} \big)} = 1 $$
and for any $R>0$ and  $x_j \in \Omega_j$, the metric $R$-ball in $\bigl(T_j, \varphi_j^* g_{\mathrm{Eucl}}\bigr)$ centered at $x_j$ converges in the Gromov-Hausdorff topology to an $R$-ball in $\bigl(\mathbb{H}^2, \frac{1}{8}g_{\mathrm{hyp}}\bigr)$.




\begin{proof}[Proof of Theorem \ref{min surf 1}]

\textbf{For (1):}
Take $S$ to be the round unit 2-sphere with three points removed. Let $\Sigma$ be the underlying topological surface, which is a thrice-punctured sphere. 
It is well-known that the Teichm\"{u}ller space of $\Sigma$ is reduced to a single point $\mathscr{T}_\Sigma = \{\mu_0\}$, and that the mapping class group of $\Sigma$ (i.e. the set of isotopy classes of the group of orientation-preserving homeomorphisms of $\Sigma$) is  isomorphic to symmetric group $\mathbf{S}_3$ (the permutation group of $\{1,2,3\}$) \cite[remark after Proposition 10.5, and Proposition 2.3]{FaMa11}.
Let $g_{\mathrm{hyp}}$ be the complete finite area hyperbolic metric on $\Sigma$, unique up to isometries. The elements of the mapping class group are represented by the ``obvious'' symmetries of $(\Sigma,g_{\mathrm{hyp}})$.


We have $\pi_1(S)\approx F_2$.
By \cite{BC19}, namely Theorem \ref{BC} in the Appendix, and Lemma \ref{proba1}, there exists a sequence of unitary representations 
$\rho_j:\pi_1(S)\to U(N_j)$ 
which strongly converges,  such that $\rho_j(\pi_1(S))$ is finite, and such that $\rho_j$ satisfies $\blambda_{\rho_j}(p)=0$ for each puncture $p$ of $S$ (see (\ref{blambda}) for the definition of $\blambda_{\rho_j}(p)$). In particular 
\begin{equation}\label{e=eren}
\E(S,\rho_j)=\Eren(S,\rho_j)<\infty
\end{equation} 
by Corollary \ref{coro:basic} (1) and Lemma \ref{lem:one}. Moreover by Theorem \ref{energy continuity},  for large $j$,
$$\E(S,\rho_j)>0.$$ 

By Theorem \ref{realize}, 
there is a harmonic representative $\psi_j:\tilde{S}\to \mathbb{S}^{2N_j-1}$ of $(S,\rho_j)$, so thanks to (\ref{e=eren}), 
$ \E(\psi_j\vert_{\mathbf{D}_S}) = \E(S,\rho_j).$
Because the Teichm\"{u}ller space of $\Sigma$ is trivial, actually $
\E(S,\rho_j)=\Area(\Sigma,\rho_j)
$
(see Definition \ref{def spherearea bis}). By Theorem \ref{min surf thm}, $\psi_j$ is both harmonic and weakly conformal, namely it is a branched minimal immersion. Hence,
\begin{equation}\label{e=a}
\Area(\mathbf{D}_S, (\psi_j)^*g_{\mathrm{Eucl}}) = \E(\psi_j\vert_{\mathbf{D}_S}) = \E(S,\rho_j)=\Area(\Sigma,\rho_j).
\end{equation}

Since $\rho_j(\pi_1(S))$ is finite, $\ker(\rho_j)$  has finite index in $\pi_1(S)$. Thus, we can consider the connected finite covering $S_j$ of $S$, with fundamental group $\ker(\rho_j)$. The map $\psi_j$ factors through a  branched minimal immersion $\psi'_j:S_j\to \mathbb{S}^{2N_j-1}$ with finite total energy. 
Let $\bar{S}_j$ be the closed Riemann surface obtained from $S_j$ by closing the punctures.
By the removable singularity theorem of Sacks-Uhlenbeck \cite{SU81} \cite[Theorem 4.26]{CM11},  $\psi'_j$ extends across the punctures to  a  branched minimal immersion still denoted by 
$$\psi'_j: \bar{S}_j \to\mathbb{S}^{2N_j-1}.$$ 
As a consequence,
$$\psi_j(\tilde{S}) = \psi'_j(\bar{S}_j)$$
is a closed, branched, minimal surface in $\mathbb{S}^{2N_j-1}$.

By Theorem \ref{thm:main}, the pullback metrics $(\psi_j)^*g_{\mathrm{Eucl}}$, which descends to a metric on $S$ by equivariance of $\psi_j$, 
converges smoothly to $\frac{1}{8}g_{\mathrm{hyp}}$ on any compact subsets of $S$.
Hence, the Gaussian curvature of $(\psi_j)^*g_{\mathrm{Eucl}}$ converges smoothly to $-8$ away from the punctures.
It remains to control the Gaussian curvature around the punctures, which is achieved with general properties of minimal surfaces. 
Let $\alpha_j$ be the degree of the covering $S_j\to S$.
Let $\delta\in(0,1]$ be a small constant.
Let ${A}_\delta\subset{S}$, $\tilde{A}_\delta\subset\tilde{S}$ be defined as in (\ref{adelta}), and let $\tilde{A}_{j}$ be the image of $\tilde{A}_\delta$ in $S_j$ by the projection $\tilde{S}\to S_j$.
Write $$\tilde{Z}_j:=\bar{S}_{j}\setminus \tilde{A}_{j}.$$
Both $\tilde{A}_j$ and $\tilde{Z}_j$ implicitly depend on $\delta$.
By (\ref{e=a}), and Theorem \ref{thm:main}, 
\begin{equation}\label{limit quotient}
\begin{split}
&\lim_{j\to \infty}\frac{\Area(\bar{S}_{j},(\psi'_j)^*g_{\mathrm{Eucl}})}{\alpha_{j}} = \frac{\pi}{4},\\
&\lim_{\delta\to 0} \liminf_{j\to \infty}\frac{\Area(\tilde{A}_{j},(\psi'_j)^*g_{\mathrm{Eucl}})}{\Area(\bar{S}_{j},(\psi'_j)^*g_{\mathrm{Eucl}})} =1,\\
& \lim_{\delta\to 0} \limsup_{j\to \infty}\frac{\Area(\tilde{Z}_j,(\psi'_j)^*g_{\mathrm{Eucl}})}{\Area(\bar{S}_{j},(\psi'_j)^*g_{\mathrm{Eucl}})} =0,
\end{split}
\end{equation}
\begin{equation}\label{limit ||}
\lim_{j\to \infty}\frac{1}{\Area(\bar{S}_{j},(\psi'_j)^*g_{\mathrm{Eucl}})}\int_{\tilde{A}_{j}}|K_{(\psi'_j)^*g_{\mathrm{Eucl}}}+8|=0.
\end{equation}

Note that $\partial A_\delta$ is a union of embedded curves around punctures with geodesic curvature constant equal to $8$ with respect to the rescaled  hyperbolic metric $\frac{1}{8}g_{\mathrm{hyp}}$ on $S$.
Thus by (\ref{convergence psi}) again, as $j\to \infty$, the geodesic curvature $\kappa$ of $\partial \tilde{Z}_j$ inside of $(\tilde{Z}_N,(\psi'_j)^*g_{\mathrm{Eucl}})$ becomes arbitrarily close to the constant $8$. Besides, 
$$\lim_{j\to \infty}\frac{1}{\alpha_{j}}\Length(\partial \tilde{Z}_j,(\psi'_j)^*g_{\mathrm{Eucl}}) =\lim_{j\to \infty}\frac{1}{\alpha_{j}}\Length(\partial \tilde{A}_{j},(\psi'_j)^*g_{\mathrm{Eucl}}) = \Length(\partial {A}_{\delta} ,\frac{1}{8}g_{\mathrm{hyp}})$$
Thus, by (\ref{limit quotient}) and since $\lim_{\delta\to 0} \Length(\partial {A}_{\delta},\frac{1}{8}g_{\mathrm{hyp}}) =0$,  
\begin{equation}\label{moyenne kappa}
\liminf_{\delta\to 0}\limsup_{j\to \infty}\big| \frac{1}{\Area(\bar{S}_{j},(\psi'_j)^*g_{\mathrm{Eucl}} )}\int_{\partial\tilde{Z}_j}\kappa\big| = 0.
\end{equation}

The restriction of the map $\psi'_{j}$  from $ \tilde{Z}_j$ to $\mathbb{S}^{2N_j-1}$ is a branched minimal immersion of a disjoint union of disks.  By the Gauss-Bonnet formula applied to $\tilde{Z}_j$,
$$\int_{\tilde{Z}_j} K_{(\psi'_j)^*g_{\mathrm{Eucl}}} \geq  - \int_{\partial \tilde{Z}_j} \kappa ,$$
where the inequality comes from the branched points and the disk topology (an easy way to see this inequality is to observe that each branched point can be locally smoothed in a neighborhood to a smooth immersion such that the integral of the Gaussian curvature in this neighborhood is negative). From (\ref{moyenne kappa}), we get
\begin{equation}\label{moyenne geq}
\liminf_{\delta\to 0}\liminf_{j\to \infty} \frac{1}{\Area(\bar{S}_j,(\psi'_j)^*g_{\mathrm{Eucl}})}\int_{\tilde{Z}_j} K_{(\psi'_j)^*g_{\mathrm{Eucl}}} \geq 0.
\end{equation}

Next, by the Gauss equation \cite[Chapter 6, Theorem 2.5]{DoCarmo92}, the Gaussian curvature (which is well-defined almost everywhere) satisfies
$$K_{(\psi'_j)^*g_{\mathrm{Eucl}}}(x) \leq 1\quad \text{for almost all $x\in \bar{S}_j$}.$$
This implies that
\begin{align*}
& \frac{1}{\Area(\bar{S}_j,(\psi'_j)^*g_{\mathrm{Eucl}})}  \int_{\tilde{Z}_j} |K_{(\psi'_j)^*g_{\mathrm{Eucl}}}|\\
& \leq \frac{\Area(\tilde{Z}_j,(\psi'_j)^*g_{\mathrm{Eucl}})}{\Area(\bar{S}_j,(\psi'_j)^*g_{\mathrm{Eucl}})} + \frac{1}{\Area(\bar{S}_j,(\psi'_j)^*g_{\mathrm{Eucl}})}\int_{\tilde{Z}_j} (1-K_{(\psi'_j)^*g_{\mathrm{Eucl}}}) \\
& = 2\frac{\Area(\tilde{Z}_j,(\psi'_j)^*g_{\mathrm{Eucl}})}{\Area(\bar{S}_j,(\psi'_j)^*g_{\mathrm{Eucl}})}  - \frac{1}{\Area(\bar{S}_j,(\psi'_j)^*g_{\mathrm{Eucl}})}\int_{\tilde{Z}_j} K_{(\psi'_j)^*g_{\mathrm{Eucl}}} .
\end{align*}
So combining (\ref{limit quotient}) and (\ref{moyenne geq}, we find
$$\limsup_{\delta\to 0} \limsup_{j\to \infty} \frac{1}{\Area(\bar{S}_j,(\psi'_j)^*g_{\mathrm{Eucl}})}\int_{\tilde{Z}_j} |K_{(\psi'_j)^*g_{\mathrm{Eucl}}}| =0.$$
Together with (\ref{limit quotient}) and (\ref{limit ||}), we get
\begin{align*}
0\leq & \limsup_{j\to  \infty} \frac{1}{\Area(\bar{S}_j,(\psi'_j)^*g_{\mathrm{Eucl}})}\int_{\bar{S}_j}|K_{(\psi'_j)^*g_{\mathrm{Eucl}}}+8| \\
 \leq &\limsup_{\delta\to 0} \limsup_{j\to  \infty}\big( \frac{1}{\Area(\bar{S}_j,(\psi'_j)^*g_{\mathrm{Eucl}})}\int_{\tilde{A}_j}|K_{(\psi'_j)^*g_{\mathrm{Eucl}}}+8| \\
 & +\frac{1}{\Area(\bar{S}_j,(\psi'_j)^*g_{\mathrm{Eucl}})} \int_{\tilde{Z}_j}(|K_{(\psi'_j)^*g_{\mathrm{Eucl}}}|+8)\big)\\
  = &0.
 \end{align*}
 This almost ends the proof of (1): the branched minimal immersion $\psi'_j: \bar{S}_j \to\mathbb{S}^{2N_j-1}$ satisfies all the requirements except that  it is maybe not simple. But we can always find another branched minimal immersion $\varphi_j: T_j \to\mathbb{S}^{2N_j-1}$ where $T_j$ is a closed Riemann surface covered by $\bar{S}_j$, which is simple and which has same image as $\psi'_j$. Since clearly 
 $$ \frac{1}{\Area(\bar{S}_j,(\psi'_j)^*g_{\mathrm{Eucl}})}\int_{\bar{S}_j}|K_{(\psi'_j)^*g_{\mathrm{Eucl}}}+8| =  \frac{1}{\Area(T_j,\varphi_j^*g_{\mathrm{Eucl}})}\int_{T_j}|K_{\varphi_j^*g_{\mathrm{Eucl}}}+8|,$$ we conclude (1).

\vspace{1em}

\textbf{For (2):}
We continue with the notations used above.
Recall that  
$$\psi_j(\tilde{S})=\psi_j'(\bar{S}_j) = \varphi_j(T_j)$$
where $\psi_j$, $\psi_j'$ and $\varphi_j$ are branched minimal immersions and $\varphi_j$ is simple.
From the proof of Proposition \ref{>e} and Theorem \ref{pullback metric}, we can assume that $\psi_j$ smoothly converges (after composing with unitary matrices) on compact subsets to a smooth, minimal immersion $\psi_\infty:\tilde{S}\to \mathbb{S}_H$ where $\mathbb{S}_H$ is the unit sphere of the smallest Hilbert space containing $\psi_\infty(\tilde{S})$, and $\psi_\infty$ is equivariant with respect to a representation $\rho':\pi_1(S)\to \End(H)$ such that $\rho'\sim \lambda_{\pi_1(S)}$,
and $\psi_\infty$ is equivariantly area-minimizing. 
By (\ref{slack consequence}), $(\psi_\infty)^*g_H = \frac{1}{8}g_{\mathrm{hyp}}$ where $g_{\mathrm{hyp}}$ is the hyperbolic metric on $\tilde{S}$ given by the lift of the hyperbolic metric on the thrice-punctured sphere $S$. In particular for each $\delta>0$, $\psi_j$ is an immersion on $\tilde{A}_\delta$ for large $j$, and  $(\psi_j)^*g_{\mathrm{Eucl}}$ converges smoothly to $\frac{1}{8}g_{\mathrm{hyp}}$ on $\tilde{A}_\delta$. By (\ref{limit quotient}), the ratio of the $(\psi_j)^*g_{\mathrm{Eucl}}$-area of $\tilde{A}_{j}$ and the $(\psi_j)^*g_{\mathrm{Eucl}}$-area of $\bar{S}_j$ tends to $1$ as $j\to \infty$ and $\delta\to 0$.


We say that a branched immersion $\varphi$ from a domain $D$ is ``\emph{almost injective}'' when it sends any two disjoint open sets $U,V\subset D$ to different images $\varphi(U)\neq \varphi(V)$. 
Due to the convergence properties of $(\psi_j)^*g_{\mathrm{Eucl}}$ recalled above,
if Benjamini-Schramm convergence of $(T_j,\varphi_j^*g_{\mathrm{Eucl}})$ to $(\mathbb{H}^2, \frac{1}{8}g_{\mathrm{hyp}})$ fails, it must be because for some $R>0$, some $\delta>0$ and some $x_j\in \tilde{A}_{\delta}$, the restriction of $\psi_j$ to the $R$-ball in $(\tilde{S},(\psi_j)^*g_{\mathrm{Eucl}})$ centered at $x_j$ is not almost injective for all $j$ large (i.e. $\psi_j$ maps the $R$-ball to a surface which intersects itself on some open set).   
Thus, since $\psi_j$ converges to $\psi_\infty$ on compact sets, in order to show Benjamini-Schramm convergence of $(T_j,\varphi_j^*g_{\mathrm{Eucl}})$ to $(\mathbb{H}^2,\frac{1}{8}g_{\mathrm{hyp}})$, we just need to show that the map $\psi_\infty$ is almost injective.


We now show that $\psi_\infty$ is almost injective. If that is not the case, then by the classical unique continuation property of minimal surfaces \cite[Theorem 6.1]{CM11} and the fact that the pullback metric $(\psi_\infty)^*g_H$ is a rescaled hyperbolic metric, there is a nontrivial group of isometries $\mathcal{J}$ 
defined as the set of isometries $J$ of $(\tilde{S},\frac{1}{8}g_{\mathrm{hyp}})$ such that for any $x\subset \tilde{S}$, $\psi_\infty(J(x)) = \psi_\infty(x)$. Since $\psi_\infty$ is an immersion,   $\mathcal{J}$  acts freely on the rescaled hyperbolic plane $(\tilde{S}, \frac{1}{8}g_{\mathrm{hyp}})$.
Recall that $\pi_1(S)$ also acts by isometries on $(\tilde{S}, \frac{1}{8}g_{\mathrm{hyp}})$.
Let $\mathcal{J}'$ be the subgroup of isometries generated by these two subgroups. 
Let us check that this group of isometries can neither be non-discrete, nor be discrete.
Consider the surface $\tilde{S}/\mathcal{J}$ endowed with its rescaled hyperbolic metric $\frac{1}{8}g_{\mathrm{hyp}}$.  This surface is not the hyperbolic plane because $\mathcal{J}$ is nontrivial. The group $\pi_1(S)$ is contained in the normalizer of $\mathcal{J}$ because if $J\in \mathcal{J}$ and $a\in \pi_1(S)$, then for any $x\subset \tilde{S}$, 
$$\psi_\infty(aJa^{-1}(x)) = \rho'(a)\big(\psi_\infty(Ja^{-1}(x))\big) = \rho'(a)\big(\psi_\infty(a^{-1}(x))\big)  = \psi_\infty(x).$$
So $\pi_1(S)$ also acts by isometries on the rescaled hyperbolic surface $\tilde{S}/\mathcal{J}$. It is known that the isometry group of a complete (rescaled) hyperbolic surface, such as $\tilde{S}/\mathcal{J}$, is either discrete, or isomorphic to the circle $\mathbb{S}^1$ (in that case $\tilde{S}/\mathcal{J}$ is a topological annulus and $\mathcal{J}$ is abelian), see \cite{Agol11}. In the latter case, the group $\pi_1(S)$ would have to act as a subgroup of the circle on the topological annulus $\tilde{S}/\mathcal{J}$, but this is impossible since $\pi_1(S)$ would have to be abelian. Hence $\pi_1(S)$ acts as a discrete subgroup of isometries on $\tilde{S}/\mathcal{J}$, and so $\mathcal{J}'$ has to be discrete.
But this is impossible too.
Indeed if it  is discrete, since $\pi_1(S)$ is a strict subgroup of $\mathcal{J}'$,  the quotient $\tilde{S}/\mathcal{J}'$ is a nontrivial quotient of $(S,\frac{1}{8}g_{\mathrm{hyp}})$. But $S$ is the thrice-punctured sphere, which has no smooth nontrivial quotient, so $\tilde{S}/\mathcal{J}'$ has at least one orbifold point with angle strictly less than $2\pi$. 
This means that for some nontrivial element $g\in \pi_1(S)$, the corresponding isometry of  $\tilde{S}/\mathcal{J}$ is torsion. 
Since $H$ is by assumption the smallest Hilbert space containing $\psi_\infty(\tilde{S})$, the unitary transformation $\rho'(g)$ of $H$ is uniquely determined by its action on $\psi_\infty(\tilde{S})$. We deduce that
$\rho'(g)$ is also torsion.
But then, if $e\in \pi_1(S)$ is the identity element, for some positive integer $m$, $0=\|\rho'(e) -\rho'(g^m) \| =  \|\lambda_{\pi_1(S)}(e) -\lambda_{\pi_1(S)}(g^m)\|\neq 0$ were the middle equality comes from Theorem \ref{equivalent} and the fact that $\rho'\sim \lambda_{\pi_1(S)}$. This is a contradiction, thus $\mathcal{J}$ cannot be nontrivial, and  $\psi_\infty$ is almost injective.

\end{proof}

\vspace{1em}

\section{Appendix: About unitary representations} \label{section 6}

\subsection{Weak containment and weak equivalence} \label{weak equiv}

All Hilbert spaces in this paper will be complex unless otherwise noted, and will have either finite or infinite countable dimensions.
Let $\Gamma$ be a finitely generated group. 
A unitary representation of $\Gamma$ is a pair $(\rho,H)$ such that $H$ is a Hilbert space, and $\rho$ is a group morphism from $\Gamma$ to the unitary group $U(H)$ of $H$. 

There is a natural topology on the set of all unitary representations of $\Gamma$, called the Fell topology \cite[Appendix F]{BDLHV08}. Convergence in the Fell topology can be described using the notion of weak containment \cite[Definition F.1.1]{BDLHV08}:
 \begin{defn} \label{weak equiv}
 Let $(\pi,H)$ and $(\rho,K)$ be two unitary representations of $\Gamma$. We say that $\pi$ is weakly contained in $\rho$ if for every $\xi \in H$, every finite subset $Q$ of $\Gamma$, and every $\epsilon>0$, there exist $\eta_1,...,\eta_m$ in $K$ such that for all $g\in Q$,
 $$\big| \langle \pi(g)\xi,\xi \rangle - \sum_{j=1}^m \langle \rho(g)\eta_j,\eta_j\rangle \big| <\epsilon.$$
 We write $\pi\prec\rho$ if the above holds. 
 If we have both  $\pi\prec\rho$ and $\rho\prec\pi$, then we say that $\pi$ and $\rho$ are weakly equivalent, and we write $\pi\sim\rho$.
 \end{defn}

Let $\mathbb{C}[\Gamma]$ be the set of complex, finite, linear combinations of elements of $\Gamma$. 
For any unitary representation $(\pi,H)$  of $\Gamma$ and $f\in \mathbb{C}[\Gamma]$,
there is a well-defined continuous linear operator
$$\pi(f):H \to H,$$
see \cite[Section F.4]{BDLHV08}. Given such $\pi$ and $f$, let $\|\pi(f)\|$ denote the operator norm of $\pi(f)$.The next statement is contained in \cite[Theorem F.4.4]{BDLHV08}:
\begin{thm} \label{equivalent}
Let $(\pi,H)$ and $(\rho,K)$ be two unitary representations of $\Gamma$. The following are equivalent:
\begin{enumerate}
\item $\pi\prec\rho$
\item $\|\pi(f)\| \leq \|\rho(f)\|$ for all $f\in \mathbb{C}[\Gamma]$.
\end{enumerate}
\end{thm}
Let $\ell^1(\Gamma,\mathbb{C})$ be the set of $\mathbb{C}$-valued $\ell^1$-functions on $\Gamma$.  An elementary fact is that $\|\pi(f)\| \leq \|f\|_{\ell^1}$ and so in the theorem above, ``$f\in \mathbb{C}[\Gamma]$'' can be replaced with ``$f\in \ell^1(\Gamma,\mathbb{C})$''.

If $\Gamma$ is a non-abelian surface group, then it is known \cite{DLH85} that $\Gamma$ is ``$C^*$-simple'': in particular, any unitary representation of $\Gamma$ weakly contained in $\lambda_\Gamma$ is actually weakly equivalent to $\lambda_\Gamma$. See the survey \cite{DLH07} for more details on $C^*$-simplicity.

\subsection{Strong convergence} \label{strong cv}
Recall that the regular representation $\lambda_\Gamma : \Gamma\to \End(\ell^2(\Gamma,\mathbb{C}))$ of $\Gamma$ is the following canonical representation: for all $\gamma,x\in \Gamma$, $f\in \ell^2(\Gamma,\mathbb{C})$,
$$(\lambda_\Gamma(\gamma).f) (x) := f(\gamma^{-1}x).$$
Crucial to us is the notion of strong convergence:
\begin{defn} \label{definition:strong cv}
A sequence of unitary representations $\rho_j:\Gamma\to \End(V_j)$  strongly converges to the regular representation if
$$
\forall z\in \mathbb{C}[\Gamma],\quad \lim_{j\to \infty} \|\rho_j(z)\| = \|\lambda_\Gamma(z)\|
$$
where $\lambda_\Gamma$ is the regular representation of $\Gamma$.
\end{defn}
Here $\|.\|$ denotes the operator norm as before. In this paper, we omit ``to the regular representation'' when speaking about strong convergence.
The notion of strong convergence is closely connected to the notion of ``strong asymptotic freeness''. 


\subsection{Examples of strongly convergent sequences} \label{examples}
For the reader's convenience, we list the results on strong convergence which are directly applied in this paper. In what follows, let $F_k$ be a free group of rank  $k\geq 2$, generated by $x_1,...,x_k$. Let $\lambda_{F_k}$ denote its (left) regular  representation.

\subsubsection{Random Haar unitaries} 
The first examples of unitary representations of free groups which strongly converge were constructed by Haagerup-Thorbj\/{o}rnsen \cite{HT05}. These representations are defined by sending generators of the free group to random unitary matrices in $U(N)$ sampled with the probability measure which is the ``exponential'' of the GUE model \cite[Theorem 8.2 and Remark 8.3]{HT05}. 
The following result of Collins-Male \cite[Theorem 1.5]{CM14} confirms that the same conclusion holds when the sampling is done with the more obvious Haar measure (which is a different measure). 
For any $k$-tuple $(u_1^{(N)},...,u_k^{(N)})\in U(N)^k$, let $\rho_{(u_1^{(N)},...,u_k^{(N)})} :F_k \to U(N)$ be the unitary representation of $F_k$ where the generator $x_i$ is sent to  $u_i^{(N)}$.

\begin{thm}\label{CM}
For each $N\geq1$, let $u_1^{(N)},...,u_k^{(N)}$ be a family of independent random matrices in $U(N)$ sampled with the Haar measure.
Then, almost surely,  the sequence of unitary representations $\rho_{(u_1^{(N)},...,u_k^{(N)})}$ strongly converges.
\end{thm}

\subsubsection{Random permutations} 
For any integer $m\geq 3$, let $[m]:=\{1,...,m\}$ and $\mathbf{S}_m$ denote the permutation group of $[m]$. Let $V_m:=\ell^2([m],\mathbb{C})$ be the $m$-dimensional complex $\ell^2$-space endowed with the standard Euclidean metric, and $V^0_m\subset V_m$ the subspace of functions with average $0$. The group $\mathbf{S}_m$ acts on $V_m$ by the standard unitary representation (via $0-1$ matrices):
$$\mathbf{std}: \mathbf{S}_m\to \End(V_m),$$ 
and on $V^0_m$, which is the $(m-1)$-dimensional irreducible component, orthogonal to the diagonal direction:
$$\mathbf{std}: \mathbf{S}_m\to \End(V^0_m).$$

For any $k$-tuple $(s_1^{(m)},...,s_k^{(m)})\in \mathbf{S}_m$, let $\rho_{(s_1^{(m)},...,s_k^{(m)})} :F_k \to \End(V^0_m)$ be the unitary representation of $F_k$ 
equal to $\mathbf{std} \circ \theta_{(s_1^{(m)},...,s_k^{(m)})}$ where  $\theta_{(s_1^{(m)},...,s_k^{(m)})}:F_k\to \mathbf{S}_m$ is the homomorphism that sends
 the generator $x_i$  to  $s_i^{(m)}$.  Below is a special case of the strong convergence theorem of Bordenave-Collins \cite[Theorem 3]{BC19}:
\begin{thm}\label{BC}
For each $m>1$, let  $s_1^{(m)},...,s_k^{(m)}$ be a family of independent random permutations in $\mathbf{S}_m$ with respect to the uniform measure.
Then, the sequence of unitary representations $\rho_{(s_1^{(m)},...,s_k^{(m)})}$ strongly converges in probability, in the following sense: 
for any $\varepsilon>0$, for any $z\in \mathbb{C}[\Gamma]$,
with probability tending to $1$ as $m\to \infty$, we have
$$\|\rho_{(s_1^{(m)},...,s_k^{(m)})}(z)\|\leq \|\lambda_{F_k}(z)\|+\varepsilon.$$
\end{thm}

We will also need the following elementary lemma, just in the special case $k=2$ and $F_2=\langle a,b\rangle$. 
Let $\mathbb{S}_{V^0_m}$ be the unit sphere of $V^0_m$ endowed with the standard Euclidean metric. 
\begin{lem}\label{proba1}
For each $m>1$, let  $s_1^{(m)},s_2^{(m)}$ be two independent random permutations in $\mathbf{S}_m$ with respect to the uniform measure.
With probability tending to $1$ as $m\to \infty$,  the unitary representation $\rho_m:=\rho_{(s_1^{(m)},s_2^{(m)})}:   F_2\to \End(V^0_{m})$
is such that $\rho_m(a)$, $\rho_m(b)$ and $\rho_m(b^{-1}a)$ all have a respective fixed point in $\mathbb{S}_{V^0_m}$.
\end{lem}
\begin{proof}
We claim that the unitary transformation $\rho_m(a)$ has a fixed point in $\mathbb{S}_{V^0_m}$ if and only if the corresponding permutation $s_1^{(m)}$ admits a nontrivial cycle decomposition, in the sense that it admits at least two cycles. Indeed, if $x,y\in [m]$ belong to two different cycles of $s_1^{(m)}$, then the subsets 
$$X:=\{(s_1^{(m)})^l(x)\}_{l\in\mathbb{Z}},\quad Y=\{(s_1^{(m)})^l(y)\}_{l\in\mathbb{Z}}$$ are disjoint, and if $\sharp X, \sharp Y$ denote their sizes, one can define the function
$$f(z) = \left\{ \begin{array}{rcl}
\sqrt{\frac{\sharp Y}{\sharp X.\sharp Y+(\sharp X)^2}} & \mbox{for} & z\in X \\
-\frac{\sharp X}{\sharp Y} \sqrt{\frac{\sharp Y}{\sharp X.\sharp Y+(\sharp X)^2}} & \mbox{for} & z\in Y \\
0 & \mbox{for} & z\in [m]\setminus (X\cup Y)
\end{array}\right..$$
One checks that $f$ has average $0$ and $\ell^2$-norm $1$, so it is an element of $\mathbb{S}_{V^0_m}$. Moreover, it is indeed a fixed point of  $\rho_m(a) := \mathbf{std}(s_1^{(m)})$. Conversely, if $s_1^{(m)}$ is made of one cycle, then it does not fix any nontrivial function with average $0$ (via the standard representation). This shows the claim.

Next, since $s_1^{(m)}$ is chosen uniformly at random  in $\mathbf{S}_m$, the probability that it admits a nontrivial cycle decomposition converges to $1$ as $m\to \infty$ (by a simple counting argument, the probability of a random permutation of $\mathbf{S}_m$ to have a cycle of length $m$ is $\frac{1}{m}$). 
Next, $s_2^{(m)}$ is chosen independently of $s_1^{(m)}$ and so $(s_2^{(m)})^{-1}s_1^{(m)}$ is also a uniformly random element of $\mathbf{S}_m$. Thus,  $\theta_m(a)$,  $\theta_m(b)$,  $\theta_m(b^{-1}a)$ all have a nontrivial cycle decomposition with probability tending to $1$. We conclude with the claim above.

\end{proof}

\subsubsection{Representations of surface groups} 
Based on the paper of Bordenave-Collins \cite{BC19}, Louder-Magee \cite[Corollary 1.2]{LM25} constructed strongly convergent sequence of unitary representations for limit groups, in particular surface groups.

\begin{thm}\label{LM}
Let $\Gamma$ be the fundamental group of a closed oriented surface. There exists a sequence of unitary representations $\rho_j:\Gamma \to U(N_j)$ which strongly converges, and such that $\rho_j(\Gamma)$ is a finite subgroup of $U(N_j)$ for any $j\geq 1$. 

\end{thm}

\vspace{1em}

\bibliographystyle{alpha}
\bibliography{biblio_24_01_08}

\end{document}